\documentclass{amsart}

\usepackage{epic,eepic,float,fullpage}
\usepackage{amsmath,amsfonts,amssymb,amsthm}
\usepackage{graphicx}
\usepackage{mathtools}
\usepackage[colorinlistoftodos]{todonotes}
\usepackage{cancel}
\usepackage{stmaryrd}
\usepackage{url}
\usepackage{bbm}

\usepackage{hyperref}

\usepackage{etoolbox}
\patchcmd{\section}{\scshape}{\bfseries}{}{}
\makeatletter
\renewcommand{\@secnumfont}{\bfseries}
\makeatother

\DeclarePairedDelimiter\abs{\lvert}{\rvert}%
\DeclarePairedDelimiter\norm{\lVert}{\rVert}%

\makeatletter
\let\oldabs\abs
\def\abs{\@ifstar{\oldabs}{\oldabs*}}
\let\oldnorm\norm
\def\norm{\@ifstar{\oldnorm}{\oldnorm*}}
\makeatother

\makeatletter
\newtheorem*{alt@theorem}{\alt@title}
\newcommand{\newalttheorem}[2]{%
\newenvironment{alt#1}[1]{%
 \def\alt@title{#2 \ref{##1}'}%
 \begin{alt@theorem}}%
 {\end{alt@theorem}}}
\makeatother

\DeclareMathOperator\sgn{sgn}

\newtheorem{prop}{Proposition}[section]
\newalttheorem{prop}{Proposition}
\newtheorem{con}[prop]{Conjecture}
\newtheorem{theo}[prop]{Theorem}
\newtheorem*{theo*}{Theorem}
\newtheorem{lem}[prop]{Lemma}
\newtheorem{cor}[prop]{Corollary}
\newtheorem*{quest*}{Question}
\theoremstyle{definition}

\newtheorem{rem}[prop]{Ramark}

\numberwithin{equation}{section}

\def \be{\begin{equation*}}
\def \ee{\end{equation*}}

\def \ran{\right\rangle}
\def \lan{\left\langle}

\def \D{\mathcal D}

\def \Q{\mathcal Q}
\def \R{\mathbb R}

\def \({\left (}
\def \){\right )}
\def \im {\mathrm{im}}

\renewcommand\P{\mathbb P}
\newcommand\E{\mathbb E}

\newcommand{\1}{\mathbbm 1}

\newcommand \M{\mathcal M_{z,z';\theta}}

\newcommand\calD{\mathcal D}
\newcommand\calE{\mathcal E}
\newcommand \calQ{\mathcal Q}
\newcommand \calP{\mathcal P}

\setcounter{tocdepth}{1}

\title{Dirichlet forms of diffusion processes on Thoma simplex}
\author{Sergei Korotkikh}

\begin{document}

\maketitle

\begin{abstract} We study a prominent two-parametric family of diffusion processes $X_{z,z'}$ on an infinite-dimensional Thoma simplex. The family was constructed by Borodin and Olshanski in 2007 and it closely resembles Ethier-Kurtz's infinitely-many-neutral-allels diffusion model on Kingman simplex (1981) and Petrov's extension of Ethier-Kurtz's model (2007). The processes $X_{z,z'}$ have unique symmetrizing measures, namely, the boundary $z$-measures, which play the role of Poisson-Dirichlet measures in our context.

We establish the following behavior of diffusions $X_{z,z'}$: immediately after the initial moment they jump into a dense face of Thoma simplex and then always stay there. In other words, the face acts as a natural state space for the diffusions, while other points of the simplex act like an entrance boundary for our process. As a key intermediate step we study the Dirichlet forms of the diffusions $X_{z,z}$ and find a new description for them. 
\end{abstract}

\tableofcontents

\section{Introduction}

\subsection{Background}

In \cite{EK81} Ethier and Kurtz introduced infinitely-many-neutral-allels model motivated by approximations of models from population genetics. The model itself is a one-parameter family of diffusion processes on the Kingman simplex
\be
\overline{\nabla}_\infty=\{x=(x_1, x_2, x_3, \dots)| x_1\geq x_2\geq x_3\geq\dots\geq0, \sum_{i\geq 1} x_i\leq1\},
\ee
which are defined by a Feller pre-generator acting on an algebra generated by power sums
$$
q_k=\sum_{i\geq 1} x_i^{k+1}, \qquad k\in\mathbb Z_{\geq 1}.
$$ 
Later in \cite{Pe07} Petrov constructed a more general family of diffusion processes $X^{Pet}_{a,\tau}$ on $\overline{\nabla}_\infty$ depending on a pair of real parameters $(a,\tau)$. This generalization is defined by the pre-generator 
\begin{equation}
\label{Pet-gen}
A^{Pet}_{a,\tau}=\sum_{i\geq 1}x_i\frac{\partial^2}{\partial x_i^2}-\sum_{i,j\geq 1}x_ix_j\frac{\partial^2}{\partial x_i\partial x_j}-\sum_{i\geq 1}(\tau x_i+a)\frac{\partial}{\partial x_i},
\end{equation}
which again acts on the subalgebra of functions on $\overline{\nabla}_\infty$ generated by $q_k$. Ethier-Kurtz's model is included in the family $X^{Pet}_{a,\tau}$ as an $a=0$ case. One of remarkable properties of Petrov's model is its association with the celebrated two-parameter family of Poisson-Dirichlet distributions $PD(a, \tau)$ \cite{PY97}, which act as unique symmetrizing measures for diffusions $X^{Pet}_{a, \tau}$. Due to this connection and in attempt to generalize properties of the infinitely-many-neutral-allels model, Petrov's model has been actively studied since its introduction, see \cite{RW09}, \cite{FS09}, \cite{FSWX11}, \cite{Eth14}, \cite{CDERS16}, \cite{FPRW21}. 

Here is one intriguing result about this model. In \cite{Eth14} Ethier showed that the face $\nabla_\infty\subset\overline{\nabla}_\infty$ consisting of points with $\sum_{i\geq 1}x_i=1$ is the natural state space of the processes $X^{Pet}_{\tau,a}$. More precisely, for any starting point $x\in \overline{\nabla}_\infty$, even outside of the face $\nabla_\infty$, we have
\begin{equation}
\label{petface}
\P_x(X^{Pet}_{\tau,a}(t)\in \nabla_\infty\quad \text{for\ all\ } t>0)=1.
\end{equation}
Ethier's elegant proof of this result relied on two ingredients. The first ingredient was the continuity of the transition function of the process with respect to $PD(a, \tau)$, which was proved in \cite{Eth92}, \cite{FSWX11}. The second ingredient was a weaker version of \eqref{petface} where the process starts from its invariant distribution instead of an arbitrary point $x$. To prove this weaker property one needed to study the Dirichlet forms associated to $X^{Pet}_{\tau,a}$, which was first done in \cite{Sch91} for Ethier-Kurtz's model and later in \cite{FS09} for Petrov's diffusion.

The goal behind this work is to translate Ethier's result from \cite{Eth14} to a related family of processes $X_{z,z'}$ introduced by Borodin and Olshanski in \cite{BO07}. This family depends on a couple of complex parameters $(z, z')$ satisfying certain conditions. The state space of these processes is \emph{Thoma simplex}
\be
\Omega=\{(\alpha;\beta)=(\alpha_1, \alpha_2, \dots, \beta_1, \beta_2, \dots)\mid \alpha_1\geq\alpha_2\geq\dots 0,\ \beta_1\geq\beta_2\geq\dots\geq 0,\ \sum_{i\geq 1}\alpha_i+\sum_{j\geq 1}\beta_j\leq 1\}.
\ee
The process is originally constructed as a limit of up-down Markov chains on integer partitions and it can be defined by a pre-generator acting on a certain subspace of functions. The connection with Ethier-Kurtz's and Petrov's model comes from an even more general three-parametric family of diffusions $X_{z,z',\theta}$ on $\Omega$, which was introduced in \cite{Olsh09} and which depends on an additional parameter $\theta>0$. The two-parametric family $X_{z,z'}$ corresponds to the case $\theta=1$ while Ethier-Kurtz's model can be obtained as a $\theta\to 0$ degeneration.

Similarly to the Ethier-Kurtz and Petrov's models, there is a family of measures $\mathcal M_{z,z'}$ on $\Omega$ associated with the diffusions $X_{z,z'}$. These measures are called \emph{$z$-measures} and they originate from harmonic analysis of the infinite symmetric group \cite{KOV93}, \cite{KOV03}. While $z$-measures share some similarities with Poisson-Dirichlet measures, the former have proved to be more challenging to study, partially due to a lack of convenient descriptions of these measures. Nevertheless there are many results regarding these measures, see \cite{BO98}, \cite{BO99}, \cite{Ok00} \cite{BOS05}, \cite{Olsh18}.

Due to the existence of symmetrizing measures $\mathcal M_{z,z'}$, we are able to study an analogue of \eqref{petface} for the process $X_{z,z'}$ using the same two ingredients as in \cite{Eth14}. The first ingredient, the transition density of $X_{z,z'}$ with respect to $\mathcal M_{z,z'}$, was already covered in our previous paper \cite{K18}. The study of the other ingredient, namely the Dirichlet form of $X_{z,z'}$, constitutes the majority of this work. 

\subsection{Main results} Let us be more concrete about the studied processes. To make our results more general we use the three-parametric family of processes $X_{z,z';\theta}$ from \cite{Olsh09} and then we set $\theta=1$ when necessary, obtaining results for the two-parametric family $X_{z,z'}$. For $k\geq 1$ define the moment coordinates on $\Omega$ by 
$$
q_k=\sum_{i\geq 1}\alpha_i^{k+1}+(-\theta)^k\sum_{j\geq 1}\beta_j^{k+1}.
$$
Then $X_{z,z';\theta}$ can be defined as a process with Feller pre-generator
\begin{multline}\label{Agen-intro}
A_{z,z';\theta}=\sum_{k,l\geq 1}(k+1)(l+1)(q_{k+l}-q_kq_l)\frac{\partial^2}{\partial q_k\partial q_l}\\
+\sum_{k\geq 1}(k+1)\left(((1-\theta)k+z+z')q_{k-1}-(k+\theta^{-1}zz')q_k\right)\frac{\partial}{\partial q_k}
+\theta\sum_{k,l\geq 0}(k+l+3)q_kq_l\frac{\partial}{\partial q_{k+l+2}}
\end{multline}
acting on $\mathbb R[q_1, q_2, \dots]$, where we set $q_0=1$.

The process $X_{z,z';\theta}$ has a unique symmetrizing probability measure $\M$. So, we can define the \emph{Dirichlet form} of this process, which is the symmetric form $\mathcal E_{z,z';\theta}(f,g)=-\langle A_{z,z';\theta}f, g\rangle_{L^2(\Omega,\M)}$. This form plays a key role in our study of the process $X_{z,z';\theta}$ and, in fact, $X_{z,z';\theta}$ can be uniquely reconstructed from $\mathcal E_{z,z';\theta}$. However, one should be careful when working with this form: $\mathcal E_{z,z';\theta}$ can only be naturally defined on the functions from a set $\calD[\mathcal E_{z,z';\theta}]$ which coincides with the domain of $\left(-\overline{A}_{z,z';\theta}\right)^{\frac12}$. Here $\overline{A}_{z,z';\theta}$ denotes the closure of $A_{z,z';\theta}$ in $L^2(\Omega,\M)$, which turns out to be nonpositive definite. 

The main technical result of this work describes the restriction of $\mathcal E_{z,z';\theta}$ to the polynomials in natural coordinates $\alpha_i, \beta_j$. Note that $A_{z,z';\theta}$ and $\mathcal E_{z,z';\theta}$ are initially defined in terms of moment coordinates $q_k$, so showing that $\mathcal E_{z,z';\theta}$ can be evaluated at such polynomials already requires a nontrivial argument.

\begin{theo*}[Theorem \ref{result}] Let $\theta=1$. Then $\mathbb R[\alpha_1,\alpha_2,\dots, \beta_1,\beta_2,\dots]\subset \calD[\calE_{z,z';\theta}]$ and we have
\be
\calE_{z,z';\theta}(u,v)=\int_{\Omega} \Gamma_{\alpha\beta}(u,v) d\M,
\ee
where
$$
\Gamma_{\alpha\beta}(u,v)=\sum_{i\geq1} \alpha_i\frac{\partial u}{\partial \alpha_i}\frac{\partial v}{\partial \alpha_i}+\sum_{j\geq 1} \beta_j\frac{\partial u}{\partial \beta_j}\frac{\partial v}{\partial \beta_j}
-\left(\sum_{i\geq1} \alpha_i \frac{\partial u}{\partial \alpha_i}+\sum_{j\geq 1} \beta_j \frac{\partial u}{\partial \beta_j}\right)\left(\sum_i \alpha_i \frac{\partial v}{\partial \alpha_i}+\sum_{j\geq1} \beta_j \frac{\partial v}{\partial \beta_j}\right).
$$
\end{theo*}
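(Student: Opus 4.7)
The plan is to verify the carré du champ formula first on the subalgebra $\mathbb R[q_1, q_2, \dots]$ where $A_{z,z';\theta}$ is defined by \eqref{Agen-intro}, and then extend the identity to $\mathbb R[\alpha_1, \alpha_2,\dots,\beta_1,\beta_2,\dots]$ via an approximation argument.

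\textbf{Step 1 (agreement on moment coordinates).} Since $A_{z,z';\theta}$ is a second-order differential operator in the $q_k$ whose second-order symbol is $(k+1)(l+1)(q_{k+l} - q_k q_l)$, the standard carré du champ identity $2\Gamma(u,v) = A_{z,z';\theta}(uv) - u A_{z,z';\theta}v - v A_{z,z';\theta}u$ yields $\Gamma(q_k, q_l) = (k+1)(l+1)(q_{k+l} - q_k q_l)$, and the derivation property then determines $\Gamma(u,v)$ for all $u, v \in \mathbb R[q_k]$. After specializing $\theta = 1$ and substituting $q_k = \sum_i \alpha_i^{k+1} + (-1)^k \sum_j \beta_j^{k+1}$, a direct calculation confirms $\Gamma_{\alpha\beta}(q_k, q_l)$ gives the same answer: with $\partial q_k/\partial \alpha_i = (k+1)\alpha_i^k$ and $\partial q_k/\partial \beta_j = (-1)^k(k+1)\beta_j^k$, the sign factor $(-1)^k(-1)^l = (-1)^{k+l}$ recombines the $\beta$-diagonal term into the correct power sum inside $q_{k+l}$, while the cross term factorizes as $q_k q_l$. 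This sign cancellation is precisely what restricts the identification to $\theta = 1$. Since both $\Gamma$ and $\Gamma_{\alpha\beta}$ satisfy the Leibniz rule in each argument, the identity $\calE_{z,z';\theta}(u,v) = \int_{\Omega} \Gamma_{\alpha\beta}(u,v)\, d\M$ extends throughout $\mathbb R[q_k]$.

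\textbf{Step 2 (extension to $\mathbb R[\alpha,\beta]$).} The difficulty here is that $\mathbb R[\alpha,\beta]$ is strictly larger than $\mathbb R[q_k]$ — no individual $\alpha_i$ is a polynomial in the moment coordinates — so this step cannot be purely algebraic. For each $u \in \mathbb R[\alpha,\beta]$ I would construct an approximating sequence $u_n \in \mathbb R[q_k]$ converging to $u$ in $L^2(\Omega, \M)$ and such that $\Gamma_{\alpha\beta}(u - u_n, u - u_n) \to 0$ in $L^1(\M)$; the standard closability framework for Dirichlet forms then places $u$ in $\calD[\calE_{z,z';\theta}]$ with the claimed integral representation. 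A natural way to build such sequences is through a finite-dimensional approximation: on the $N$-level Thoma simplex the $\alpha_i,\beta_j$ become actual coordinates in which the restricted form admits an explicit expression, the desired identity can be verified there directly, and one passes to $N \to \infty$ using the projective structure of $\Omega$ together with the consistency of the $z$-measures.

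\textbf{Main obstacle.} The hardest part will be the $N \to \infty$ control: one must show the approximating sequence is Cauchy in the Dirichlet graph norm, which requires uniform bounds on $\Gamma_{\alpha\beta}(u_n, u_n)$ across finite levels. Here the integrability properties of the $z$-measures $\M$, together with the explicit transition density established in \cite{K18}, should supply the needed estimates. Once uniform control is in place, closability of $\calE_{z,z';\theta}$ together with the Step 1 identity finishes the argument.
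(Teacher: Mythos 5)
Your Step 1 is essentially correct and matches the paper's starting point (Proposition \ref{gammaForm}): on $\Lambda^\circ=\mathbb R[q_1,q_2,\dots]$ the form is $\int\Gamma(u,v)\,d\M$ with $\Gamma(q_k,q_l)=(k+1)(l+1)(q_{k+l}-q_kq_l)$, and the substitution $\partial q_k/\partial\alpha_i=(k+1)\alpha_i^k$, $\partial q_k/\partial\beta_j=(-\theta)^k(k+1)\beta_j^k$ shows $\Gamma_{\alpha\beta}(q_k,q_l)=\Gamma(q_k,q_l)$. (Note, though, that this algebraic identity holds for every $\theta>0$, not only $\theta=1$; the restriction to $\theta=1$ in the theorem does not come from a sign cancellation but from Conjecture \ref{simple-assumption} — that $\M$ does not charge the sets $\{\alpha_i=\alpha_{i+1}>0\}$ — which is currently known only for $\theta=1$ and is needed to identify a.e.\ pointwise limits in the approximation argument.)

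The genuine gap is in Step 2, which is where the entire content of the theorem lies. You correctly observe that no $\alpha_i$ is a polynomial in the $q_k$, but you then offer no workable construction of an approximating sequence. Two problems: (a) your closability criterion is circular — to require $\Gamma_{\alpha\beta}(u-u_n,u-u_n)\to0$ in $L^1(\M)$ you would already need to know that $\calE$ is computed by $\int\Gamma_{\alpha\beta}(\cdot,\cdot)\,d\M$ on the functions $u-u_n$, which is exactly what is being proved; what the closedness of $\calE$ actually requires (Proposition \ref{Eprop}) is an explicit sequence $u_n\in\calD[\calE]$ with $u_n\to u$ in $L^2$ and with the joint limits $\lim_{n,m}\calE(u_n,v_m)$ existing and computable. (b) The proposed "finite-dimensional $N$-level Thoma simplex" route is not viable as described: the finite levels underlying $\Omega$ are sets of partitions on which the pre-generator is a difference operator, and the Thoma coordinates $\alpha_i,\beta_j$ are boundary frequencies with no direct finite-level analogue in which $\Gamma_{\alpha\beta}$ could be "verified directly"; the transition density from \cite{K18} plays no role in the Dirichlet-form computation. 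The missing idea is a concrete approximation of the natural coordinates inside the domain: the paper uses $\chi_C=C^{-1}\ln(1+\Q[e^{Ct}])$, built from the Laplace transform of the Thoma measure $\nu_{\alpha;\beta}$, which converges to $\alpha_1$ as $C\to\infty$ and to $-\theta\beta_1$ as $C\to-\infty$, together with uniform bounds and a.e.\ pointwise limits for the associated carr\'e du champ expressions $\Gamma_{C,D}$, $\Gamma_C(v)$; and then an induction over the shifted algebras $\calP^{(N,M)}$ (generated by $\alpha_1,\dots,\alpha_N,\beta_1,\dots,\beta_M$ and the shifted moments $q_k^{(N,M)}$) to reach $\alpha_i,\beta_j$ for $i,j\geq2$. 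Without this (or an equivalent) mechanism, Step 2 is a statement of intent rather than a proof.
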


Since the form $\Gamma_{\alpha\beta}$ above does not depend on $z,z',\theta$, all dependence of $\calE_{z,z';\theta}$ on these parameters comes from the measure $\M$. Moreover, our result is consistent with the computations of the Dirichlet forms for Ethier-Kurtz's and Petrov's models: if we restrict functions to the subsimplex $\beta_1=\beta_2=\dots=0$ and replace the measure $\M$ by the Poisson-Dirchlet measure $PD(a, \tau)$ we get the form
$$
\int \sum_{i,j}(\1_{i=j}\alpha_i-\alpha_i\alpha_j)\frac{\partial u}{\partial \alpha_i}\frac{\partial v}{\partial \alpha_j}  d PD(a,\tau).
$$
This is exactly the Dirichlet form of Ethier-Kurtz's and Petrov's models computed in \cite{Sch91}, \cite{FS09}.

To prove our result we approximate the natural coordinates $\alpha_i, \beta_j$ by the moment coordinates $q_k$ in such a way that we can track the values of $\calE_{z,z';\theta}$ along this approximation. While the approach itself is not new and it was used in \cite{Sch91},\cite{FS09}, the approximations used earlier do not work for the process $X_{z,z';\theta}$ on Thoma simplex. Instead we have to use Laplace transforms of certain measures $\nu_{\alpha;\beta}$ on the interval $[-\theta;1]$ to connect moment coordinates $q_k$ and natural coordinates $\alpha_i, \beta_j$.

Let us also comment about the restriction $\theta=1$ in our result, which comes from technical limitations and which we believe is not needed. However our proof relies on several facts regarding the measure $\M$ which are currently known only for the case $\theta=1$, where measures $\mathcal{M}_{z,z'}$ can be extensively studied using their connection to Schur functions and symmetric groups, see \cite{BO98} for details. 

The computation of Dirichlet forms allows us to establish the following analogue of \eqref{petface}:
\begin{theo*}[Theorem \ref{boundary-result}] Let $\theta=1$. Then for any point $\omega\in\Omega$ we have
\be
\mathbb P_{\omega}(X_{z,z';\theta}(t)\in\Omega_0, \forall t>0)=1,
\ee
where
$$
\Omega_0=\{(\alpha;\beta)\in\Omega\mid \sum_{i\geq 1}\alpha_i+\sum_{j\geq 1} \beta_j=1\}.
$$
\end{theo*}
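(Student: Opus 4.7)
The plan is to mirror the two-step scheme of Ethier \cite{Eth14} used for Petrov's diffusion. First I would establish the conclusion for $\M$-almost every starting point $\omega$ using the explicit Dirichlet form description from Theorem \ref{result}, and then upgrade it to every starting point using the transition density result of \cite{K18}.

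For the first step, consider the truncated total-mass functions
\[
G_n=\sum_{i=1}^{n}\alpha_i+\sum_{j=1}^{n}\beta_j, \qquad G=\sum_{i\geq 1}\alpha_i+\sum_{j\geq 1}\beta_j,
\]
so that $G_n\uparrow G$ pointwise and $G\equiv 1$ on $\Omega_0$. Being polynomials in the natural coordinates, each $G_n$ lies in $\calD[\calE_{z,z';\theta}]$ by Theorem \ref{result}, and a direct evaluation of $\Gamma_{\alpha\beta}$ on the diagonal gives $\Gamma_{\alpha\beta}(G_n,G_n)=G_n-G_n^2$. Since $\M$ is supported on $\Omega_0$---a property of $z$-measures at $\theta=1$ that follows from the Schur-function description recalled in the introduction---we have $G=1$ $\M$-almost everywhere, whence $\|G_n-1\|_{L^2(\Omega,\M)}\to 0$ by monotone convergence and $\calE_{z,z';\theta}(G_n,G_n)=\int_\Omega G_n(1-G_n)\,d\M\to 0$ by dominated convergence. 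The same computation applied to $G_n-G_m$ shows that $(G_n)$ is Cauchy in the graph norm $\calE_1=\calE_{z,z';\theta}+\|\cdot\|^2_{L^2(\Omega,\M)}$, so by closedness the sequence converges in $\calD[\calE_{z,z';\theta}]$ to the equivalence class of the constant $1$.

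Next I would argue that $\Omega\setminus\Omega_0$ has zero capacity, so that the weak version of the statement follows from standard Dirichlet form theory. The form $(\calE_{z,z';\theta},\calD[\calE_{z,z';\theta}])$ is regular: the polynomial algebra in the natural coordinates sits inside $\calD[\calE_{z,z';\theta}]\cap C(\Omega)$, is uniformly dense in $C(\Omega)$ by Stone--Weierstrass, and is $\calE_1$-dense in $\calD[\calE_{z,z';\theta}]$ because it is a Feller core for $A_{z,z';\theta}$. Thus quasi-continuous representatives exist, and extracting a quasi-everywhere convergent subsequence from the $\calE_1$-convergent approximants $G_n$ realizes the pointwise limit $G$ as a quasi-continuous version of the constant $1$. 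Since $G<1$ precisely on $\Omega\setminus\Omega_0$, this set must have zero capacity, and the polar-set characterization of hitting probabilities produces $\P_\omega(X_{z,z';\theta}(t)\in\Omega_0\ \forall t>0)=1$ for $\M$-a.e.\ $\omega$.

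For the upgrade, I would invoke the transition density of $X_{z,z';\theta}$ with respect to $\M$ established in \cite{K18}: for every $\omega\in\Omega$ and every $t>0$, the law of $X_{z,z';\theta}(t)$ under $\P_\omega$ is absolutely continuous with respect to $\M$, so it gives zero mass to the $\M$-null exceptional set from the previous step. The Markov property then yields $\P_\omega(X_{z,z';\theta}(s)\in\Omega_0\ \forall s>t)=1$, and letting $t\downarrow 0$ along a sequence completes the proof. The main obstacle will be the middle paragraph: while the energy computation is immediate from Theorem \ref{result}, setting up the regular Dirichlet form framework on $\Omega$, carefully identifying the pointwise function $G$ with the quasi-continuous representative of its $L^2$-class, and invoking a hitting-time result strong enough to conclude ``for all $t>0$'' rather than only ``for almost every $t$'' will require the most delicate bookkeeping.
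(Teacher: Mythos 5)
Your proposal is correct in outline, and its first and last steps coincide with the paper's: you use the same truncated mass functions $G_n$ (the paper's $\varphi_n$), the same energy computation $\calE_{z,z';\theta}(G_n,G_n)=\int_\Omega (G_n-G_n^2)\,d\M\to 0$ showing the sequence is $\calE_1$-Cauchy, and the same final upgrade from $\M$-a.e.\ starting points to all starting points via the Markov property and the absolute continuity of $p_t(\omega,\cdot)$ with respect to $\M$ from \cite{K18}. Where you genuinely diverge is the middle step. The paper never passes through capacities: it invokes Lemma \ref{uniformLem} (i.e.\ \cite[Lemma 5.1.2]{FOT10}), which for an $\calE_1$-Cauchy sequence of \emph{continuous} functions yields a subsequence $\varphi_{n_k}$ such that, for $\M$-a.e.\ starting point, $\varphi_{n_k}(X_t)$ converges uniformly on compact time intervals almost surely; combined with the a.s.\ continuity of trajectories this makes $t\mapsto\varphi(X_t)$ continuous a.s., and since $\varphi(X_t)=1$ a.s.\ for each \emph{fixed} $t$ (invariance of $\M$ plus $\M(\Omega_0)=1$), continuity upgrades this to all $t$ simultaneously. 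Your route --- regularity of the form, quasi-continuous versions, $\Omega\setminus\Omega_0$ polar, polar sets never hit --- is a valid standard alternative and even proves slightly more (a quasi-everywhere statement, with no need for path continuity), but it requires more machinery to be set up honestly: you must verify regularity (for this use the core $\Lambda^\circ=\mathbb R[q_1,q_2,\dots]$, which is the domain of the pre-generator and hence automatically $\calE_1$-dense; the $\calE_1$-density of $\calP_{nat}$ that you assert is not established in the paper, only the inclusion $\calP_{nat}\subset\calD[\calE_{z,z';\theta}]$), and you must check that the Feller process is properly associated with the regular Dirichlet form so that the polar-set hitting theorem applies to it. The paper's use of Lemma \ref{uniformLem} packages exactly the piece of this theory that is needed and avoids the rest.
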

Again, we fully prove the statement in the case $\theta=1$, while for general $\theta$ several additional assumptions about $\M$ are needed.

Finally, we try to use our computation of the Dirichlet form to give an expression for the generator $A_{z,z';\theta}$ in terms of the natural coordinates, similarly to \eqref{Pet-gen}. The exact statement can be found in Section \ref{gen-sect} and \eqref{conjA}, but the main feature of our expression is that it involves singularities of the form $(\alpha_i-\alpha_j)^{-1}$ and $(\beta_i-\beta_j)^{-1}$. Such behavior is new, it is not present in Ethier-Kurtz's or Petrov's diffusions, and intuitively it suggests that a repulsive interaction between the coordinates $\alpha_i,\beta_j$ appears when $\theta>0$. This is consistent with the expected repulsive behavior for the measures $\M$, which comes from the similarity to the log-gas models with positive $\beta=2\theta$ (\cite{BO02}). However, the study of the generator $\overline A_{z,z';\theta}$ in terms of the natural coordinates seems to be a much more challenging problem compared to the Dirichlet form, so we leave this discussion as an open question.

\subsection{Notation and layout of the paper} Throughout the text all functions are real-valued, in particular we only work with real Hilbert spaces. We use $C(S)$ to denote the Banach space of real-valued continuous functions on a compact set $S$ with $||\cdot||_{\sup}$-norm. We use $L^2(S,\mu)$ to denote the Hilbert space of square-integrable real-valued function on a space $S$ with a Borel probability measure $\mu$. We use $\langle\cdot,\cdot\rangle_\mu$ to denote the standard scalar product of $L^2(S,\mu)$.

The structure of this paper is as follows. We start by establishing notation regarding Dirichlet forms and Markov processes in Section \ref{background-sect}. Then in Section \ref{process-sect} we provide definitions and known properties of the diffusion processes on Thoma simplex and discuss the assumptions we need for the results of this work. In Section \ref{basicQ-sect} our method of approximating $\alpha_1$ and $\beta_1$ using moment coordinates is described. Section \ref{Ecomp-sect} is dedicated to the proof of our first result, Theorem \ref{result}, which tells how to evaluate the Dirichlet form of the process at natural coordinates. Then, using theory of Dirichlet forms and earlier results, we repeat Ethier's argument to show that the process $X_{z,z';\theta}$ at $t=0$ immediately enters the face $\Omega_0$ and then never leaves it. Finally, in Section \ref{gen-sect} we discuss the description of the generator in terms of natural coordinates.

\subsection{Acknowledgments} The author is deeply grateful to Grigori Olshanski for suggesting this project and explaining the context around it. The author would also like to thank Alexei Borodin, Vadim Gorin and Leonid Petrov for numerous discussions and suggestions regarding this work.


\section{Background on symmetric Markov processes and Dirichlet forms}\label{background-sect}

Here we remind the necessary background about Dirichlet forms and establish our notation for random processes. Our main reference for this material is \cite{FOT10}.

Let $S$ be a compact metrizable space, which we also view as a measurable space equipped with Borel $\sigma$-algebra. We use $D(S)$ to denote the set of right-continuous functions $\omega: \mathbb R_{\geq 0}\to S$ with left limits, such functions are called \emph{c\`adl\`ag functions}. We view $D(S)$ as a measurable space with respect to the minimal $\sigma$-algebra such that $\omega\mapsto\omega(t)$ is measurable for every $t\geq 0$. 

Recall that a Feller process on $S$ is a Markov process with c\`adl\`ag trajectories which can be started from any point $x\in S$ and which satisfies the Feller condition: transition functions depend continuously on the starting point $x$. More precisely, we define a Feller process $X$ on $S$ as the following data:
\begin{itemize}
\item a family of probability measures $\{\P_x\}_{x\in S}$ on $D(S)$,
\item a right-continuous family of filtrations $\{\mathcal F_t\}_{t\geq 0}$ on $D(S)$ such that the variables $X(t):\omega\mapsto \omega(t)$ are adapted with respect to it,
\end{itemize}
satisfying the following properties:
\begin{itemize}
\item for all $x\in S$ we have $\P_x(X(0)=x)=1$,
\item for all $f\in C(S)$ and $t\geq 0$ the mapping $x\mapsto \E_x f(X(t))$ is in $C(S)$,
\item for all $x\in S$ and for all bounded measurable functions $Y$ on $D(S)$ we have the Markov condition 
$$\E_x(Y\circ \theta_s\mid \mathcal F_s)=\E_{X(s)}Y \qquad\P_x \text{\ a.s.},$$
where $\theta_s:\omega(t)\mapsto\omega(t+s)$ is the shift operator. 
\end{itemize}

To each Feller process we can associate the following objects. \emph{Transition functions} $p_t(x, dy)$ are measures on $S$ depending on $x\in S, t\in\R_{\geq0}$ which are defined as distributions of $X(t)$ with respect to $\P_x$. Using $p_t(x,dy)$ as kernels of integral operators we obtain \emph{the transition semigroup} $T_t$. Namely, the operator $T_t$ acts on a measurable bounded function $f$ on $S$ by
$$
[T_tf](x)=\int_S f(y)p_t(x,dy).
$$ 
Since $X$ is a Feller process $T_t C(S)\subset C(S)$. The Feller \emph{generator} of $X$ is the (generally, unbounded) operator $A$ on $C(S)$  defined by
$$
Af=\lim_{t\to 0^+} \frac{T(t)f-f}{t},
$$
where the limit is taken in $C(S)$ and the domain $\calD[A]$ consists of the functions $f$ such that the limit above exists. The generator of a Feller process contains all information about the process and, under certain conditions on $A$, the construction above can be reversed to obtain a Feller process with the given generator $A$. For our purposes we do not need the details of this construction, see \cite[Chapter 3]{Lig10} for details. 

Let $\mu$ be a Borel probability measure on $S$ with topological support $S$, that is, any non-empty open subset $U\subset S$ has positive measure. A process $X$ is called \emph{symmetric} with respect to $\mu$ if for any bounded measurable functions $f, g$ on $S$ we have $\langle T_t f, g\rangle_\mu=\langle f, T_tg\rangle_\mu$.  
In particular, $\mu$ is an invariant measure for the process $X$ since $T_t1\equiv1$. 
\begin{lem} Let $X$ be a symmetric Feller process with respect to $\mu$. Then its generator $A$ has a nonpositive definite self-adjoint extension in $L^2(S, \mu)$.
\end{lem}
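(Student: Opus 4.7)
The plan is to extend the transition semigroup $\{T_t\}$ from $C(S)$ to $L^2(S,\mu)$ as a strongly continuous semigroup of self-adjoint contractions, invoke the standard Hille--Yosida / functional-calculus machinery to produce a nonpositive self-adjoint $L^2$-generator $\tilde A$, and then verify that $\tilde A$ restricts to the Feller generator $A$ on $\calD[A]$.

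For the extension, I would argue as follows. For $f\in C(S)$, Jensen's inequality applied to the probability kernel $p_t(x,\cdot)$ gives $|T_tf(x)|^2\leq (T_t|f|^2)(x)$ pointwise. Integrating against $\mu$ and using that the symmetry hypothesis with $g\equiv 1$ forces $\mu$-invariance of the semigroup, i.e.\ $\int T_t|f|^2\,d\mu=\int |f|^2\,d\mu$, one obtains $\|T_tf\|_{L^2(\mu)}\leq\|f\|_{L^2(\mu)}$. Since $C(S)$ is dense in $L^2(S,\mu)$, each $T_t$ extends by continuity to a contraction on $L^2(S,\mu)$, and the semigroup law survives the extension. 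The symmetry $\langle T_tf,g\rangle_\mu=\langle f,T_tg\rangle_\mu$, initially given on the dense subspace $C(S)$, then extends by boundedness to all of $L^2$, making each $T_t$ self-adjoint. For strong continuity at $t=0$, note that for $f\in C(S)$, right-continuity of trajectories and bounded convergence yield $T_tf(x)\to f(x)$ pointwise, hence $T_tf\to f$ in $L^2(\mu)$; the uniform bound $\|T_t\|_{L^2\to L^2}\leq 1$ together with density of $C(S)$ propagate this to all of $L^2$ via a routine $3\ve$-argument.

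With $\{T_t\}$ now a strongly continuous self-adjoint contraction semigroup on $L^2(S,\mu)$, the Hille--Yosida theorem delivers an $L^2$-generator $\tilde A$ which is self-adjoint, and nonpositivity is immediate from $\langle \tilde Af,f\rangle_\mu=\lim_{t\to 0^+}t^{-1}(\langle T_tf,f\rangle_\mu-\|f\|_\mu^2)\leq 0$, using $\langle T_tf,f\rangle_\mu\leq\|T_tf\|_\mu\|f\|_\mu\leq\|f\|_\mu^2$. Finally, if $f\in\calD[A]$, the uniform convergence $(T_tf-f)/t\to Af$ in $C(S)$ transfers to $L^2(S,\mu)$, so $f\in\calD[\tilde A]$ and $\tilde Af=Af$; hence $\tilde A$ is the desired extension.

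The whole argument is standard semigroup theory, and the only step requiring real care is promoting the $C(S)$-level symmetry hypothesis to genuine self-adjointness of each bounded operator $T_t$ on $L^2(S,\mu)$; this relies crucially on the $L^2$-contraction estimate extracted from Jensen together with the density of $C(S)$ in $L^2(S,\mu)$, which is where the topological support assumption on $\mu$ is implicitly used. Everything else -- self-adjointness of the generator, nonpositivity, and the fact that $\tilde A$ extends $A$ -- is essentially formal once the extension of $\{T_t\}$ is in place.
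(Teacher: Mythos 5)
Your argument is correct and follows the same route as the paper, which simply cites \cite[(1.4.13), Lemma 1.4.3 and Lemma 1.3.1]{FOT10} for exactly the facts you verify by hand: the $L^2$-contraction bound via Jensen and $\mu$-invariance, extension by density to a strongly continuous self-adjoint contraction semigroup, and self-adjointness plus nonpositivity of its generator. The only stray remark is that density of $C(S)$ in $L^2(S,\mu)$ holds for any Borel probability measure on a compact metric space and does not use the full-support assumption (which serves other purposes in the paper); this does not affect the proof.
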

\begin{proof} From \cite[(1.4.13) and Lemma 1.4.3]{FOT10} the semigroup $T_t$ can be extended to a strongly continuous semigroup of contracting self-adjoint operators on $L^2(S,\mu)$. Its generator is a nonpositive definite self-adjoint operator by \cite[Lemma 1.3.1]{FOT10}.
\end{proof}
From now on we assume that the self-adjoint extension above is the closure $\overline{A}$ of $A$ in $L^2(\Omega,\M)$, which will be the case for the considered process on Thoma simplex.

Given a nonpositive definite self-adjoint operator $\overline{A}$ on $L^2(S,\mu)$ we can define a symmetric nonnegative definite form $\calE$ with domain $\calD[\calE]=\calD[\sqrt{-\overline{A}}]$ by setting
$$
\calE(u,v)=\langle\sqrt{-\overline{A}}u, \sqrt{-\overline{A}}v\rangle_{\mu}.
$$
Note that if the operator $\overline{A}$ is not bounded then $\calD[\calE]$ is larger than the domain of $\overline{A}$. The resulting form $\calE$ is \emph{closed}, that is, the space $\calD[\calE]$ with the scalar product 
$$
\calE_1(u,v)=\calE(u,v)+\langle u,v\rangle_\mu
$$
is a Hilbert space. In the text we often use the following elementary criterion for $\calE$ to be defined at $f$. 
\begin{prop}
\label{Eprop}
Suppose that $\calE$ is a closed symmetric nonnegative definite form with domain $\calD[\calE]\subset L^2(S,\mu)$, $\mathcal A$ is a set of parameters, $\{f_{\alpha, n}\}_{\alpha\in\mathcal A, n\in\mathbb Z_{\geq 1}}$ is a family of functions from $\calD[\calE]$ which satisfies the following conditions:
\begin{itemize}
\item for each $\alpha\in\mathcal A$ there exists a limit $\lim_{n\to\infty} f_{\alpha,n}=f_\alpha$ in $L^2(S,\mu)$ for $f_\alpha\in L^2(S,\mu)$,
\item for each $\alpha,\beta\in\mathcal A$  there exists a joint limit $\lim_{n,m\to\infty} \calE(f_{\alpha,n}, f_{\beta,m})$.
\end{itemize}
Then for any $\alpha\in\mathcal A$ we have $f_\alpha\in \calD[\calE]$ and for any $\alpha,\beta\in\mathcal A$
\be
\calE(f_{\alpha},f_{\beta})=\lim_{n,m\to\infty}\calE(f_{\alpha,n}, f_{\beta,m}).
\ee
\end{prop}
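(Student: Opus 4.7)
The plan is to leverage the Hilbert space structure on $\calD[\calE]$ coming from the inner product $\calE_1(u,v)=\calE(u,v)+\langle u,v\rangle_\mu$, whose completeness is exactly the closedness of $\calE$ recalled just above the statement. The strategy is to show that, for each $\alpha\in\mathcal A$, the sequence $\{f_{\alpha,n}\}_n$ is Cauchy in the $\calE_1$-norm; then its $\calE_1$-limit lies in $\calD[\calE]$ and must coincide in $L^2(S,\mu)$ with the given $L^2$-limit $f_\alpha$, proving $f_\alpha\in\calD[\calE]$.

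First, I would specialize the joint-limit hypothesis to $\beta=\alpha$ and denote the resulting number by $L_\alpha=\lim_{n,m\to\infty}\calE(f_{\alpha,n},f_{\alpha,m})$. Bilinearity of $\calE$ gives
$$\calE(f_{\alpha,n}-f_{\alpha,m},\,f_{\alpha,n}-f_{\alpha,m})=\calE(f_{\alpha,n},f_{\alpha,n})-2\calE(f_{\alpha,n},f_{\alpha,m})+\calE(f_{\alpha,m},f_{\alpha,m}),$$
and each term on the right converges to $L_\alpha$ under the genuine double limit $n,m\to\infty$, so the left-hand side tends to $0$. Combined with the $L^2$-Cauchiness of $f_{\alpha,n}$ (which follows from its $L^2$-convergence), this yields $\calE_1$-Cauchiness, and closedness of $\calE$ produces an $\calE_1$-limit $\widetilde f_\alpha\in\calD[\calE]$; since $\calE_1$-convergence dominates $L^2$-convergence, $\widetilde f_\alpha=f_\alpha$ almost everywhere, which establishes $f_\alpha\in\calD[\calE]$.

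Second, to identify $\calE(f_\alpha,f_\beta)$ with the joint limit, I would split
$$\calE(f_{\alpha,n},f_{\beta,m})-\calE(f_\alpha,f_\beta)=\calE(f_{\alpha,n}-f_\alpha,\,f_{\beta,m})+\calE(f_\alpha,\,f_{\beta,m}-f_\beta),$$
and apply the Cauchy--Schwarz inequality for the nonnegative definite form $\calE$ to each summand. The factors $\calE(f_{\alpha,n}-f_\alpha,f_{\alpha,n}-f_\alpha)^{1/2}$ and $\calE(f_{\beta,m}-f_\beta,f_{\beta,m}-f_\beta)^{1/2}$ vanish thanks to the $\calE$-seminorm convergence delivered by the first step, while the remaining factors $\calE(f_{\beta,m},f_{\beta,m})^{1/2}$ and $\calE(f_\alpha,f_\alpha)^{1/2}$ stay bounded (the former by the same Cauchy property), giving the desired convergence.

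The argument is a standard abstract consequence of closedness, and there is no serious obstacle; the only conceptually delicate point is that the hypothesis must be read as a \emph{genuine} double limit rather than an iterated one, since it is precisely this joint convergence that makes the three terms on the right of the bilinearity identity above cancel to zero and thereby deliver the $\calE_1$-Cauchy criterion.
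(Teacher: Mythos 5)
Your proposal is correct and follows essentially the same route as the paper: specialize the joint limit to $\beta=\alpha$, use bilinearity to deduce that $\{f_{\alpha,n}\}$ is $\calE_1$-Cauchy, invoke completeness of $(\calD[\calE],\calE_1)$ and the fact that the $\calE_1$-norm dominates the $L^2$-norm to conclude $f_\alpha\in\calD[\calE]$, and then pass to the limit in the form. The only cosmetic difference is in the last step, where the paper deduces $\calE(f_\alpha,f_\beta)=\lim\calE(f_{\alpha,n},f_{\beta,m})$ directly from $\calE_1$-continuity by subtracting the $L^2$ inner products, while you use a telescoping decomposition plus Cauchy--Schwarz; both are equally valid.
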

\begin{proof} 
Since $\lim_{n\to\infty} f_{\alpha,n}=f_\alpha$ in $ L^2(S,\mu)$ we have the joint limits  $\lim_{n,m\to\infty} \langle f_{\alpha,n}, f_{\beta,m}\rangle_{\mu}=\langle f_\alpha,f_\beta\rangle_{\mu}$. Then for each $\alpha\in\mathcal A$ the joint limit $\lim_{n,m\to\infty} \calE_1(f_{\alpha,n}, f_{\alpha,m})$ exists, hence 
$$
\lim_{n,m\to\infty} \calE_1(f_{\alpha,n}-f_{\alpha,m},f_{\alpha,n}-f_{\alpha,m})=0
$$
and $f_{\alpha,n}$ form an $\calE_1$-Cauchy sequence. Since $\D[\calE]$ with $\calE_1$-norm is a Hilbert space, this sequence must converge in $\calE_1$-norm to something, and since  $\calE_1$-norm is stronger than $||\cdot||_{L^2}$-norm the limit must be $f_\alpha$. Hence $f_\alpha\in\D[\calE]$ and, due to $\calE_1$-convergence,
$$
\calE(f_{\alpha},f_{\beta})=\calE_1(f_{\alpha},f_{\beta})-\langle f_{\alpha},f_{\beta}\rangle_\mu=\lim_{n,m\to\infty}\calE_1(f_{\alpha,n}, f_{\beta,m})-\langle f_{\alpha,n},f_{\beta,m}\rangle_\mu=\lim_{n,m\to\infty}\calE(f_{\alpha,n}, f_{\beta,m}).
$$
\end{proof}

When $\overline{A}$ is the closure of a generator of a symmetric Feller process $X$ as above, the resulting form $\calE$ is called the \emph{Dirichlet form} of the process $X$. In this work we only need the following property of Dirichlet forms.

\begin{lem}[{{\cite[Lemma 5.1.2]{FOT10}}}]
\label{uniformLem}
Let $\calE$ be the Dirichlet form of a process $X$ and $\{u_n\}$ be a sequence of continuous functions in $\calD[\calE]$. If $\{u_n\}$ is an $\calE_1$-Cauchy sequence, then there exists a subsequence $\{u_{n_k}\}$ satisfying the following condition for $\mu$ a.e. $x\in S$:
$$
\P_x(u_{n_k}(X_t) \text{\ converges\ uniformly in $t$ on each compact interval of\ } [0,\infty))=1.
$$
\end{lem}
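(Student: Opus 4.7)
The plan is to reduce the claim to a Borel--Cantelli argument by using the capacity-to-hitting-time transfer that is standard in Dirichlet form theory. Since $\{u_n\}$ is $\calE_1$-Cauchy, I would first extract a subsequence $\{u_{n_k}\}$ with $\calE_1(u_{n_k}-u_{n_{k+1}},u_{n_k}-u_{n_{k+1}})\leq 4^{-k}$ for every $k$, and set $v_k=u_{n_k}-u_{n_{k+1}}$. It is then enough to show that for $\mu$-a.e.\ $x$ one has, $\P_x$-a.s., $\sup_{t\in[0,T]}|v_k(X_t)|\to 0$ fast enough to be summable in $k$ for each fixed $T\in\mathbb Z_{\geq 1}$; taking a countable intersection over $T$ then yields the required uniform convergence on every compact interval.

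For this, I would introduce the open excursion sets $A_k=\{x\in S:|v_k(x)|>2^{-k/2}\}$, which are open because each $v_k$ is continuous. The variational definition of the $1$-capacity gives
$$
\mathrm{Cap}_1(A_k)\leq \calE_1(2^{k/2}v_k,2^{k/2}v_k)=2^k\calE_1(v_k,v_k)\leq 2^{-k},
$$
so $\sum_k \mathrm{Cap}_1(A_k)<\infty$. The key transfer principle from \cite{FOT10} is the hitting-time estimate
$$
\int_S \P_x(\sigma_{A_k}\leq T)\,d\mu(x)\leq e^T\cdot \mathrm{Cap}_1(A_k),
$$
where $\sigma_A=\inf\{t>0:X_t\in A\}$; it follows from the identification $\mathrm{Cap}_1(A)=\calE_1(e_A,e_A)$ with the $1$-equilibrium potential $e_A$ and the supermartingale property of $e^{-t}e_A(X_t)$ under $\P_x$.

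Summing the bounds gives $\sum_k \int_S \P_x(\sigma_{A_k}\leq T)\,d\mu(x)<\infty$, so by Fubini, for $\mu$-a.e.\ $x$ the series $\sum_k \P_x(\sigma_{A_k}\leq T)$ converges. Borel--Cantelli then ensures that $\P_x$-a.s.\ only finitely many of the events $\{\sigma_{A_k}\leq T\}$ occur, so $\sup_{t\in[0,T]}|v_k(X_t)|\leq 2^{-k/2}$ from some $k$ onward, and summing in $k$ gives the desired uniform Cauchyness of $u_{n_k}(X_t)$ on $[0,T]$. The main obstacle, which one imports from FOT Chapter 4 rather than reproves, is the capacity-hitting estimate above; for general $u\in\calD[\calE]$ this step requires passing to quasi-continuous modifications, but the hypothesis that the $u_n$ are genuinely continuous bypasses that delicate point entirely.
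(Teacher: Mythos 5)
The paper does not actually prove this lemma---it is quoted verbatim from \cite[Lemma 5.1.2]{FOT10}---and your argument is a faithful reconstruction of the standard proof given there: rapidly $\calE_1$-Cauchy subsequence, open exceedance sets of summable capacity (open precisely because the $u_n$ are genuinely continuous, which is why no quasi-continuity machinery is needed), the equilibrium-potential/hitting-time transfer, and Borel--Cantelli in $k$ followed by a countable intersection over $T$. The only two points to make explicit are that the variational capacity bound requires replacing $2^{k/2}v_k$ by $|2^{k/2}v_k|$ (legitimate since normal contractions operate on the Dirichlet form), and that the inequality $\int_S\P_x(\sigma_{A_k}\le T)\,d\mu(x)\le e^T\,\mathrm{Cap}_1(A_k)$ as stated uses conservativeness of the semigroup ($T_t1=1$, which holds here); in general one only gets $e^T\,\mathrm{Cap}_1(A_k)^{1/2}$, which is still summable, so the conclusion is unaffected either way.
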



\section{Processes on the Thoma simplex}\label{process-sect}

Here we recall the main results about the family of diffusion processes on the Thoma simplex.

The \emph{Thoma simplex} $\Omega$ is defined as the set of pairs $(\alpha;\beta)$ where $\alpha=(\alpha_i)_{i \geq 1}$ and $\beta=(\beta_i)_{i\geq 1}$ are sequences of nonnegative real numbers satisfying
$$
\alpha_1\geq\alpha_2\geq\alpha_3\geq\dots\geq 0,\qquad \beta_1\geq\beta_2\geq\beta_3\geq\dots\geq 0,
$$
$$
\sum_{i\geq 1}\alpha_i+\sum_{j\geq 1}\beta_j\leq 1.
$$
For a point $(\alpha;\beta)\in\Omega$ we sometimes also use the coordinate $\gamma:=1-\sum_{i\geq 1}\alpha_i-\sum_{j\geq 1}\beta_j$. With the topology of point-wise convergence $\Omega$ is a compact metrizable space, a metric can be explicitly given by
$$
\mathrm{dist}((\alpha; \beta), (\alpha';\beta'))=\sum_{i\geq 1}2^{-i}|\alpha_i-\alpha_i'|+\sum_{j\geq 1}2^{-j}|\beta_j-\beta_j'|.
$$ 

From now on we fix a Jack parameter $\theta>0$\footnote{The parameter $\theta$ comes from the Jack symmetric functions which are closely related to the process we study, see \cite{Olsh09} for details.}. For each point $(\alpha;\beta)\in\Omega$ we can define a probability measure $\nu_{\alpha;\beta}$ on $[-\theta;1]$ by setting 
\begin{equation}\label{thomaMeasuresDef}
\nu_{\alpha;\beta}=\sum_{i\geq 1}\alpha_i\delta_{\alpha_i}+\sum_{j\geq 1}\beta_j\delta_{-\theta\beta_j}+\gamma\delta_0,
\end{equation}
where $\delta_x$ denotes the atomic probability measure at $x$. We call $\nu_{\alpha;\beta}$ the \emph{Thoma measure} corresponding to $(\alpha;\beta)$.

Recall that for integers $k\geq 1$ we define the \emph{moment coordinates} $q_k$ on $\Omega$ by
\be
q_k(\alpha;\beta)=\sum_{i\geq 1}\alpha_i^{k+1}+(-\theta)^k\sum_{j\geq 1}\beta_j^{k+1}=\int_{-\theta}^1x^k\nu_{\alpha;\beta}(dx).
\ee
Note that for each $k\geq 1$ the function $q_k$ is continuous on $\Omega$. Indeed, since $\alpha_1\geq\alpha_2\geq\dots\geq \alpha_n$ and $\alpha_1+\dots+\alpha_n\leq 1$ we have $\alpha_n\leq\frac1n$ and similarly $\beta_n\leq\frac1n$. Then the infinite sums $\sum_{i\geq 1}\alpha_i^{k+1}$ and $\sum_{i\geq 1}\beta_i^{k+1}$ converge uniformly on $\Omega$ since $\sum_{n\geq 1}\frac{1}{n^{k+1}}$ converges, which implies continuity of $q_k$.\footnote{Note that the function $\sum_{i\geq 1}\alpha_i+\sum_{j\geq 1}\beta_j$ is not continuous on $\Omega$ and our argument relies on $k\geq 1$.} Moreover, the functions $q_k$ are algebraically independent on $\Omega$: by setting $\beta_j=0$ the functions $q_k$ become power sums $\sum_{i\geq 1}\alpha_i^k$ which are independent. Let $\Lambda^\circ=\mathbb R[q_1, q_2, \dots]$ denote the algebra generated by $q_k$.

In addition to the parameter $\theta$ we fix a pair of complex parameters $(z,z')$ which satisfies one of the following two conditions: 
\begin{itemize}
\item $z\in \mathbb C\backslash\mathbb R$ and $z'=\overline z$,

\item$\theta$ is rational and both $z$ and $z'$ are real numbers lying in one of the open intervals between two consecutive numbers from the lattice $\mathbb Z+\theta\mathbb Z$.
\end{itemize}
Then we define an operator $A_{z,z';\theta}$ on $\Lambda^\circ$ by
\begin{multline}
\label{Agenequation}
A_{z,z';\theta}=\sum_{k,l\geq 1}(k+1)(l+1)(q_{k+l}-q_kq_l)\frac{\partial^2}{\partial q_k\partial q_l}\\
+\sum_{k\geq 1}(k+1)\left(((1-\theta)k+z+z')q_{k-1}-(k+\theta^{-1}zz')q_k\right)\frac{\partial}{\partial q_k}\\
+\theta\sum_{k,l\geq 0}(k+l+3)q_kq_l\frac{\partial}{\partial q_{k+l+2}}.
\end{multline}
\begin{theo}[{{\cite[Theorems 9.6, 9.7, 9.10]{Olsh09}}}]
(i) The operator $A_{z,z',\theta}$ is closable in $C(\Omega)$ and its closure is a generator of a Feller process $X_{z,z';\theta}$ on $\Omega$.

(ii) The process $X_{z,z';\theta}$ has continuous paths a.s..

(iii) The process $X_{z,z';\theta}$ has a unique symmetrizing measure $\M$.
\end{theo}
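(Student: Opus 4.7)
The plan is to realize $X_{z,z';\theta}$ as a scaling limit of finite-state up-down Markov chains on Young diagrams, which is the same strategy used in \cite{BO07} for $\theta=1$ and naturally accommodates both the algebraic structure of \eqref{Agenequation} and the harmonic-analytic origin of $\M$. A preliminary ingredient is that $\Lambda^\circ$ is dense in $C(\Omega)$, which follows from Stone--Weierstrass once one checks that the moment coordinates separate points: equal moments $(q_k)_{k\geq 1}$ force equal Thoma measures $\nu_{\alpha;\beta}$ on $[-\theta,1]$ by the Hausdorff moment problem applied to \eqref{thomaMeasuresDef}, and the atoms of $\nu_{\alpha;\beta}$ recover the point $(\alpha;\beta)$.

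For part (i), for each $n\geq 1$ I would construct an up-down Markov chain on $\mathbb{Y}_n$ from the Jack-polynomial branching coefficients with parameter $\theta$, twisted by $(z,z')$, with an explicit reversible measure $M^{(n)}_{z,z';\theta}$. After a natural $\theta$-dependent embedding $\mathbb{Y}_n\hookrightarrow\Omega$ and a time change by a factor of $n^2$, one computes the scaled generators $A^{(n)}$ acting on monomials in $q_k$ and shows $A^{(n)}f\to A_{z,z';\theta}f$ uniformly on $\Omega$ for every $f\in\Lambda^\circ$. Combined with tightness, the Ethier--Kurtz convergence theorem for Feller semigroups then yields a Feller semigroup whose generator extends the closure of $A_{z,z';\theta}$, with $\Lambda^\circ$ as a core. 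For part (ii), a single jump of the level-$n$ chain moves each $q_k$ by at most $O(1/n)$, and integrating against the time scale $n^2$ yields a bound of the form $\mathbb{E}_\omega\bigl[(q_k(X(t+s))-q_k(X(t)))^4\bigr]=O(s^2)$ in the limit, so Kolmogorov's criterion produces continuous modifications for each $q_k$. Since the $q_k$ separate points and induce a topology finer than that of $\Omega$, the trajectories are $\P_\omega$-a.s.\ continuous. For part (iii), weak convergence $M^{(n)}_{z,z';\theta}\to\M$ transfers reversibility to the limit semigroup, and uniqueness reduces to showing that the limit semigroup admits a unique invariant probability measure, which is inherited from the uniqueness of the reversible measures at each finite level.

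The principal obstacle is the convergence $A^{(n)}\to A_{z,z';\theta}$ on $\Lambda^\circ$: the operators $A^{(n)}$ act transparently on Jack symmetric functions rather than on the generators $q_k$, so one has to express $A^{(n)}q_k$ through Jack branching identities and extract the leading asymptotics in $n$ by a careful expansion. A secondary difficulty is the uniqueness assertion in (iii), which for general $\theta$ requires either finer spectral information about $\overline{A}_{z,z';\theta}$ or, more efficiently, realizing $\M$ as a projective limit of the $M^{(n)}_{z,z';\theta}$ and invoking the corresponding uniqueness on the finite levels.
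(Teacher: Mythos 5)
This theorem is not proved in the paper at all: it is imported verbatim from \cite{Olsh09} (Theorems 9.6, 9.7, 9.10), and the strategy you outline --- up-down chains on Young diagrams built from Jack branching weights, embedding into $\Omega$, time scaling by $n^2$, convergence of generators on $\Lambda^\circ$ plus the Ethier--Kurtz semigroup convergence theorem, with $\M$ arising as the limit of the reversible measures $M^{(n)}_{z,z';\theta}$ --- is exactly the route taken in that reference (and in \cite{BO07} for $\theta=1$). So as a plan your proposal matches the actual proof, and your preliminary observation that $\Lambda^\circ$ is dense in $C(\Omega)$ via the moment problem for $\nu_{\alpha;\beta}$ is also the correct and necessary starting point.

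One step, as you have written it, would not go through: in (iii) you claim that uniqueness of the symmetrizing measure is ``inherited from the uniqueness of the reversible measures at each finite level.'' Weak convergence of the $M^{(n)}_{z,z';\theta}$ gives \emph{existence} of a symmetrizing measure for the limit, but uniqueness does not pass to the limit this way; a priori the limiting semigroup could have other symmetrizing measures unrelated to the approximating chains. The argument actually used is the one you mention only as a fallback: any symmetrizing measure $\mu$ satisfies $\int_\Omega A_{z,z';\theta}f\,d\mu=0$ for all $f\in\Lambda^\circ$, and because $A_{z,z';\theta}$ acts triangularly on the filtration of $\Lambda^\circ$ by degree with nonzero diagonal coefficients, these relations determine all the moments $\int q_{k_1}\cdots q_{k_r}\,d\mu$ recursively; density of $\Lambda^\circ$ in $C(\Omega)$ then forces $\mu=\M$. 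You should make that the primary argument rather than an alternative. For (ii), your Kolmogorov-criterion route via fourth moments of $q_k(X(t))$ is workable but heavier than needed; since the level-$n$ chains have jumps of size $O(1/n)$ in each $q_k$ and converge in the Skorokhod space, continuity of the maximal-jump functional already forces the limit to have continuous paths.
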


The measure $\M$ from the last part is called the  \emph{(boundary) z-measure} and can be described in terms of Jack symmetric functions with Jack parameter $\theta$, see \cite[Section 9.4]{Olsh09} and references there for details. For our purposes we only need the following results regarding this measure.

\begin{theo}[{\cite[Theorem 17]{K18}}] \label{density-theorem} For any $t>0$ the transition function $p_t(x, dy)$ of $X_{z,z';\theta}$ is absolutely continuous with respect to $\M$.
\end{theo}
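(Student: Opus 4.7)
The plan is to exploit the spectral structure of the generator to prove that the semigroup $T_t = e^{t\overline{A}_{z,z';\theta}}$ on $L^2(\Omega, \M)$ is Hilbert--Schmidt for every $t > 0$, which yields an $L^2$ integral kernel representing $p_t(x, dy)$ as a density with respect to $\M$.

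Grade $\Lambda^\circ = \mathbb R[q_1, q_2, \ldots]$ by $\deg q_k = k + 1$. A term-by-term inspection of \eqref{Agenequation} shows that every summand either preserves the total degree or strictly decreases it, so $A_{z,z';\theta}$ preserves the filtration $\{\Lambda^\circ_{\leq n}\}$ and acts upper-triangularly in the graded basis. A direct calculation identifies its degree-preserving part: on any monomial $q_{k_1}\cdots q_{k_\ell}$ of total degree $n = \sum_i(k_i + 1)$, the diagonal acts by multiplication by $-n(n - 1 + \theta^{-1}zz')$, independently of the particular partition into parts. Using density of $\Lambda^\circ$ in $L^2(\Omega, \M)$ (by Stone--Weierstrass, since the $q_k$ separate points on $\Omega$) together with self-adjointness of $\overline{A}_{z,z';\theta}$, one extracts a complete orthonormal eigenbasis $\{\psi_\mu\}_\mu$ of $L^2(\Omega, \M)$ indexed by partitions $\mu$ with all parts $\geq 2$, satisfying $\overline{A}_{z,z';\theta}\psi_\mu = -\lambda_{|\mu|}\psi_\mu$ with $\lambda_n = n(n - 1 + \theta^{-1}zz')$.

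With this spectral data the Hilbert--Schmidt norm of $T_t$ equals
\[
\sum_\mu e^{-2\lambda_{|\mu|} t} = \sum_{n\geq 0} d_n\,e^{-2\lambda_n t},
\]
where $d_n$ counts partitions of $n$ into parts $\geq 2$. Since $d_n$ grows only subexponentially in $n$ while $\lambda_n \sim n^2$, the series converges for every $t > 0$, so $T_t$ is Hilbert--Schmidt on $L^2(\Omega, \M)$. Its integral kernel $K_t \in L^2(\M \otimes \M)$ represents the transition function as $p_t(x, dy) = K_t(x, y)\,\M(dy)$ for $\M$-a.e.\ starting point $x$.

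The main obstacle is upgrading absolute continuity from $\M$-a.e.\ $x$ to every $x \in \Omega$, because the Hilbert--Schmidt bound alone controls only almost-every slice of $K_t$. I would attack this via the semigroup identity $p_t = p_{t/2}\circ p_{t/2}$: denoting by $N$ the $\M$-null exceptional set on which $p_{t/2}(y, \cdot) \not\ll \M$, one gets $p_t(x, A) \leq p_{t/2}(x, N)$ for any Borel $A$ with $\M(A) = 0$ and any $x \in \Omega$, reducing the problem to $p_{t/2}(x, N) = 0$ for every $x$. Establishing that requires a smoothing property stronger than Hilbert--Schmidt --- for example a pointwise bound on $K_t$, a hypercontractivity estimate, or a path-regularity argument leveraging the continuity of trajectories already established in \cite{Olsh09}. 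Supplying the right such ingredient at this step is the technical core of the proof.
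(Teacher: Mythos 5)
The statement you are proving is not proved in this paper at all: it is imported verbatim from the author's earlier work \cite{K18}, so the comparison is with the argument there. Your overall strategy --- exploit the triangular structure of $A_{z,z';\theta}$ with respect to the grading $\deg q_k=k+1$, identify the diagonal eigenvalues $-\lambda_n$ with $\lambda_n=n(n-1+\theta^{-1}zz')$ and multiplicities $d_n$ equal to the number of partitions of $n$ into parts $\geq 2$, and then represent $T_t$ by the spectral kernel $K_t(x,y)=\sum_\mu e^{-\lambda_{|\mu|}t}\psi_\mu(x)\psi_\mu(y)$ --- is exactly the strategy of \cite{K18}, and your computation of the diagonal part is correct (the partition-dependent contributions of the Euler-type terms cancel, leaving a scalar depending only on $n$). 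The Hilbert--Schmidt estimate $\sum_n d_n e^{-2\lambda_n t}<\infty$ is also fine, since $d_n$ grows subexponentially while $\lambda_n\sim n^2$ (and $\theta^{-1}zz'>0$ under the admissibility conditions on $(z,z')$, so $\lambda_n$ is positive and strictly increasing).

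The gap is precisely where you flag it, and your proposed repair does not close it. The Chapman--Kolmogorov reduction is circular: after writing $p_t(x,A)\leq p_{t/2}(x,N)$ you must show that $p_{t/2}(x,\cdot)$ annihilates the particular $\M$-null set $N$ for \emph{every} $x$, which is a statement of exactly the type you set out to prove, and nothing about $N$ makes it more tractable than a general null set. The ingredient that actually resolves this in \cite{K18} is quantitative control of the eigenfunctions: one shows that the $\psi_\mu$ (which are polynomials in the $q_k$, hence continuous on $\Omega$) admit sup-norm bounds strong enough that the series for $K_t$ converges uniformly on $\Omega\times\Omega$, so $K_t$ is a genuine continuous kernel and the identity $T_tf(x)=\int_\Omega K_t(x,y)f(y)\,\M(dy)$, known $\M$-a.e., extends to all $x$ by continuity and the fact that $\M$ has full topological support. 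Equivalently, one needs a pointwise bound on $K_{2t}(x,x)=\sum_\mu e^{-2\lambda_{|\mu|}t}\psi_\mu(x)^2$ uniform in $x$; the Hilbert--Schmidt bound only controls its integral. Obtaining these eigenfunction estimates is the actual technical content of \cite{K18}, and without them your argument proves absolute continuity only for $\M$-almost every starting point, which is strictly weaker than the theorem.
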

\begin{theo}[{\cite{Olsh18}}] The topological support of $\M$ is $\Omega$.
\end{theo}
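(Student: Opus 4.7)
The plan is to show that every non-empty open subset $U\subset\Omega$ satisfies $\M(U)>0$, which is equivalent to the asserted statement about the topological support.

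The approach I would take is to exploit the realization of $\M$ as the boundary limit of the discrete $z$-measures on integer partitions. For each $n\geq 1$, there is an explicit probability measure $M^{(n)}_{z,z';\theta}$ on the set $\mathbb{Y}_n$ of partitions of $n$, whose weights involve a product of factors of the form $(z+j-1-\theta(i-1))(z'+j-1-\theta(i-1))$ over the boxes $(i,j)\in\lambda$, normalized by a Jack--Plancherel-type factor. Under the admissibility conditions on $(z,z')$ imposed earlier, each such product is a product of nonzero real numbers of the same sign, so these weights are strictly positive on every $\lambda\vdash n$. Moreover, under the Frobenius-type embeddings $\iota_n\colon\mathbb{Y}_n\to\Omega$ that send a diagram to its modified Frobenius coordinates scaled by $1/n$, the pushforwards $(\iota_n)_*M^{(n)}_{z,z';\theta}$ converge weakly to $\M$ (this is part of how $\M$ is constructed in \cite{Olsh09}).

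The first, soft step is to check that $\bigcup_n\iota_n(\mathbb{Y}_n)$ is dense in $\Omega$: any rational point of $\Omega$ with coordinates having common denominator $n$ can be realized as $\iota_n(\lambda)$ for some $\lambda\vdash n$, and such points are dense as $n\to\infty$. Combined with strict positivity of the weights $M^{(n)}_{z,z';\theta}(\lambda)$, this says the supports of the approximating measures fill out $\Omega$.

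The delicate second step is to upgrade this soft information to $\M(U)>0$. The subtlety is that weak convergence alone is the wrong direction: the Portmanteau theorem only yields $\M(U)\leq\liminf_n(\iota_n)_*M^{(n)}_{z,z';\theta}(U)$ for open $U$, and positivity of the right side does not imply positivity of the left. To get around this, I would fix an arbitrary non-empty open $U$, a point $\omega_0\in U$, and a non-negative continuous test function $f$ supported in $U$ with $f\equiv 1$ on a smaller open neighborhood $V$ of $\omega_0$, and then try to establish a uniform-in-$n$ lower bound of the form $\int f\,d(\iota_n)_*M^{(n)}_{z,z';\theta}\geq c>0$; weak convergence then forces $\int f\,d\M\geq c$ and hence $\M(U)>0$. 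Producing this uniform lower bound is the main obstacle: it requires quantitative information about the distribution of modified Frobenius coordinates under $M^{(n)}_{z,z';\theta}$, which in the case $\theta=1$ is available through the determinantal correlation functions of $\M$ and the hypergeometric kernel of Borodin--Olshanski. This quantitative ingredient, rather than any soft compactness or density argument, is where the real work of \cite{Olsh18} must lie.
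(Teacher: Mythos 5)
The paper offers no proof of this statement: it is imported wholesale from \cite{Olsh18}, so there is no internal argument to measure yours against. Judged on its own terms, your proposal is a correct framing but not a proof. You are right that the soft facts --- strict positivity of the discrete weights $M^{(n)}_{z,z';\theta}(\lambda)$, density of $\bigcup_n\iota_n(\mathbb Y_n)$ in $\Omega$, and weak convergence of the pushforwards to $\M$ --- cannot by themselves give $\M(U)>0$, since Portmanteau points the wrong way for open sets; and your proposed repair, a uniform-in-$n$ lower bound $\int f\,d(\iota_n)_*M^{(n)}_{z,z';\theta}\ge c>0$ for $f$ pinched between $\1_{\overline V}$ and $\1_U$, is the right shape of statement: it would pass to $\int f\,d\M\ge c$ and finish.

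The gap is that this uniform lower bound is essentially the theorem restated, and you neither prove it nor exhibit a mechanism that would. Pointwise positivity of the weights gives no uniform lower bound on the total mass that $M^{(n)}_{z,z';\theta}$ assigns to the (relatively very sparse) set of diagrams with $\iota_n(\lambda)\in V$; a priori that mass could decay to $0$ with $n$, which is perfectly compatible with weak convergence to a measure vanishing on $U$. The appeal to the determinantal structure at $\theta=1$ is also not an argument: correlation functions yield upper bounds on such localization probabilities with ease, but lower bounds require controlling an inclusion--exclusion expansion, and the theorem as stated (and as used in the paper, e.g.\ to justify the standing assumption that $\M$ has full support for general admissible $\theta$) is not restricted to $\theta=1$. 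As written, the proposal locates the hard step and then defers it to the very reference being proved, so it does not establish the statement.
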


Moreover, for our main results we need to assume that the following two conjectural properties hold:

\begin{con}\label{boundary-assumption} The set 
$$
\{(\alpha;\beta)\in\Omega: \gamma>0\}=\{(\alpha;\beta)\in\Omega: \sum_{i\geq 1} \alpha_i+\sum_{j\geq 1} \beta_j<1\}
$$
is a null set with respect to $\M$.
\end{con}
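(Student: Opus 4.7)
Since $\gamma = 1 - \sum_i \alpha_i - \sum_j \beta_j \geq 0$ everywhere on $\Omega$, the conjecture is equivalent to the mass identity
\[
\E_{\M}\!\left[\sum_{i \geq 1}\alpha_i + \sum_{j \geq 1}\beta_j\right] = 1.
\]
So my plan is to compute this expected total mass and verify that it equals $1$, making essential use of the additional tools available at $\theta=1$.

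The natural reformulation is to push $\M$ forward by $(\alpha;\beta)\mapsto \mathcal{N}_\omega := \sum_i \delta_{\alpha_i}+\sum_j \delta_{-\theta\beta_j}$, a random counting measure on $[-\theta,1]\setminus\{0\}$. Setting $w(x)=x$ for $x>0$ and $w(x)=-x/\theta$ for $x<0$, we have $1-\gamma=\int w(x)\, \mathcal N_\omega(dx)$, and so formally
\[
\E_{\M}[1-\gamma]=\int_{[-\theta,1]\setminus\{0\}} w(x)\, R_1(x)\, dx,
\]
where $R_1$ is the one-point correlation function of the pushforward process. For $\theta=1$, the work of Borodin--Olshanski \cite{BO98} (together with \cite{BOS05}, \cite{Olsh18}) provides an explicit description of this correlation function via hypergeometric-type kernels, and I would try to verify $\int w(x) R_1(x)\, dx = 1$ either by direct calculation on the explicit kernel, or by recognizing it as a moment of a generating function evaluated at a special point (reflecting the total-mass normalization of the Thoma measure).

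The main technical obstacle is the lower semicontinuity (not continuity) of the functional $\sum \alpha_i+\sum \beta_j$ on $\Omega$: truncation is needed to interchange the expectation with the sum. I would work with the cutoffs
\[
g_\epsilon(\alpha;\beta)=\sum_{\alpha_i>\epsilon}\alpha_i+\sum_{\theta\beta_j>\epsilon}\beta_j = \int_{\{|x|>\epsilon\}} w(x)\, \mathcal N_\omega(dx),
\]
which depend on only finitely many configuration points (at most $2\theta/\epsilon$ of them) and whose expectation $\int_{\{|x|>\epsilon\}} w(x) R_1(x)\, dx$ is controlled. Since $g_\epsilon\uparrow 1-\gamma$ pointwise, monotone convergence reduces the problem to evaluating $\lim_{\epsilon\to 0^+}\int_{\{|x|>\epsilon\}} w(x)R_1(x)\,dx$ and checking this limit equals $1$; the integrability near $0$ is the delicate point, since $R_1$ may blow up there while being damped by the factor $w(x)$ of order $|x|$. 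For general $\theta>0$ the analogous determinantal structure is not known, which is consistent with the statement remaining conjectural in that generality.
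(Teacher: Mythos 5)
This statement is not proved in the paper at all: it is stated as a Conjecture, open for general $\theta>0$, with the $\theta=1$ case imported by citation from \cite[Theorem 6.1]{Olsh98} and \cite[Theorem 2.5.1]{Bor98}. So there is no internal proof to compare against; the relevant comparison is with those references, and your outline does follow their strategy: pass to the point process $\sum_i\delta_{\alpha_i}+\sum_j\delta_{-\theta\beta_j}$, reduce the claim (via $1-\gamma\leq 1$) to the first-moment identity $\E_{\M}\left[\sum_i\alpha_i+\sum_j\beta_j\right]=1$, and evaluate that moment through the one-point correlation function $R_1$ of the $z$-measure process, which at $\theta=1$ is explicitly known in terms of hypergeometric kernels. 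The reduction steps you give are sound: $g_\epsilon\uparrow 1-\gamma$ pointwise, monotone convergence applies, and since $1-\gamma\leq1$ the limit $\int w(x)R_1(x)\,dx$ automatically exists and equals $\E_{\M}[1-\gamma]\leq 1$, so only the exact value is at stake. (Minor point: the number of configuration points with $|x|>\epsilon$ is at most $(1+\theta)/\epsilon$, not $2\theta/\epsilon$, but nothing depends on this.)

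The genuine gap is that the entire content of the theorem lies in the step you defer: verifying $\int_{[-\theta,1]\setminus\{0\}}w(x)R_1(x)\,dx=1$. You offer two possible routes ("direct calculation on the explicit kernel" or "recognizing it as a moment of a generating function") without carrying out either, and this is precisely where the hypergeometric machinery of \cite{Olsh98}, \cite{Bor98} is actually used; the identity is a nontrivial integral evaluation, not a formal normalization. Until that computation is exhibited, the argument establishes only $\E_{\M}[1-\gamma]\leq 1$, which is trivially true and does not exclude $\gamma>0$ on a set of positive measure. As an outline of how the $\theta=1$ case is known, your plan is accurate; as a proof, it is incomplete at its central point, and for general $\theta$ it correctly identifies why the statement remains conjectural (no determinantal or otherwise computable description of $R_1$ is available).
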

\begin{con}\label{simple-assumption} For each $i\geq 1$ the sets
$$
\{(\alpha;\beta)\in\Omega: \alpha_i=\alpha_{i+1}>0\}, \qquad \{(\alpha;\beta)\in\Omega: \beta_i=\beta_{i+1}>0\}
$$
are null sets with respect to $\M$.
\end{con}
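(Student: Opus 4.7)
The plan is to exploit the determinantal description of $\M$ available in the $\theta=1$ regime. Under the map
\be
(\alpha;\beta)\longmapsto \mathcal X:=\{\alpha_i:\alpha_i>0\}\cup\{-\beta_j:\beta_j>0\},
\ee
the pushforward of $\M$ is known from \cite{BO98} to be a determinantal point process on $[-1,0)\cup(0,1]$ whose correlation kernel (the hypergeometric/Whittaker kernel $K_{z,z'}$) is real-analytic, hence continuous, on each of the two open half-intervals. Under this identification, the event $\{\alpha_i=\alpha_{i+1}>0\}$ for some $i\geq 1$ is exactly the event that $\mathcal X$ has two coincident points in $(0,1]$; symmetrically, $\{\beta_i=\beta_{i+1}>0\}$ corresponds to two coincident points in $[-1,0)$. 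So it suffices to show that this determinantal point process has no coincident points $\M$-almost surely.

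For this I would carry out the standard two-point intensity estimate. Fix a compact subinterval $[a,b]\subset(0,1]$, so that both $K_{z,z'}(x,x)$ and the two-point correlation density
\be
\rho_2(x,y)=K_{z,z'}(x,x)K_{z,z'}(y,y)-K_{z,z'}(x,y)^2
\ee
are bounded on $[a,b]^2$ by some constant $C$. Applying the correlation identity for determinantal processes,
\be
\E\bigl[\#\{(x,y)\in(\mathcal X\cap[a,b])^2:x\neq y,\ |x-y|<\ve\}\bigr]=\iint_{[a,b]^2\cap\{|x-y|<\ve\}}\rho_2(x,y)\,dx\,dy\leq 2C(b-a)\ve,
\ee
which tends to $0$ as $\ve\to 0^+$. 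Any pair of coincident points in $\mathcal X\cap[a,b]$ would contribute to the count on the left for every $\ve>0$, so $\M$-almost surely $\mathcal X\cap[a,b]$ contains no coincidences. Exhausting $(0,1]$ by intervals $[a,1]$ as $a\to 0^+$, and arguing symmetrically on $[-1,0)$, then rules out coincidences in $\mathcal X$ everywhere on $(0,1]\cup[-1,0)$, as needed.

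The main obstacle is the initial reduction: identifying the pushforward of $\M$ along the configuration map with the Whittaker determinantal point process, and verifying continuity (or at least local boundedness) of the kernel on compact subsets of $(0,1]$ and $[-1,0)$. Both facts are contained in the literature on $z$-measures at $\theta=1$ (notably \cite{BO98} and subsequent work), so for $\theta=1$ the plan really reduces to invoking and combining them. For general $\theta$ no analogous determinantal description of $\M$ is presently available, which is precisely why Conjecture~\ref{simple-assumption} has to be imposed as a hypothesis outside the Jack parameter $\theta=1$.
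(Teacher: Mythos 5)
Your plan is sound, and it is essentially the route the paper itself relies on: the paper does not prove Conjecture \ref{simple-assumption} but delegates the $\theta=1$ case to \cite{Olsh98} and \cite{Bor98}, which establish exactly the determinantal/correlation-function description of the pushed-forward configuration $\{\alpha_i\}\cup\{-\beta_j\}$ that your argument needs; the absolute continuity of $\rho_2$ with respect to Lebesgue measure on compact squares away from $0$ then kills the diagonal, which is your $\ve\to0$ estimate.

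One point deserves more care in the write-up. In the count
$\#\{(x,y)\in(\mathcal X\cap[a,b])^2:x\neq y,\ |x-y|<\ve\}$
the condition $x\neq y$ must mean \emph{distinct particle labels}, not distinct positions: a coincidence $\alpha_i=\alpha_{i+1}>0$ produces a pair of particles at the same position, which contributes to the second \emph{factorial} moment measure $\E\sum_{i\neq j}\delta_{(x_i,x_j)}$ but not to a count of pairs of distinct points of the underlying set. Correspondingly, the determinantal identity you invoke must be known for the configuration counted with multiplicity (equivalently, the factorial moment measures of the random measure $\sum_i\delta_{\alpha_i}+\sum_j\delta_{-\beta_j}$ must be shown to be absolutely continuous with densities $\det[K_{z,z'}(x_i,x_j)]$); this is precisely what \cite[Theorem 2.5.1]{Bor98} provides, and without it the argument would be circular, since identifying $\mathcal X$ with a simple point process presupposes the conclusion. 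With that reading fixed, the estimate and the exhaustion of $(0,1]$ and $[-1,0)$ by compacts complete the proof for $\theta=1$; as you say, the absence of any analogous description for general $\theta$ is why the statement remains conjectural there.
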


We expect these statements to be true, however currently they are only known in the case $\theta=1$, namely:

\begin{theo}[{\cite[Theorem 6.1]{Olsh98}, \cite[Theorem 2.5.1]{Bor98}}] Both Conjecture \ref{boundary-assumption} and Conjecture \ref{simple-assumption} hold when $\theta=1$.
\end{theo}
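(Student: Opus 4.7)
My plan is to exploit the fact that for $\theta=1$ the $z$-measure $\M$ admits a very concrete description as the pushforward of a determinantal point process on $(-1,1)$. The relevant map is $\Omega \to \mathrm{Conf}([-1,1])$ sending $(\alpha;\beta) \mapsto \{\alpha_i\}_{i\geq 1} \cup \{-\beta_j\}_{j\geq 1}$. Under this map, the prelimit measures $M^{(n)}_{z,z'}$ on Young diagrams of size $n$, identified with modified Frobenius configurations rescaled by $n$, form a determinantal process with an explicit discrete hypergeometric kernel; passing to the $n \to \infty$ limit yields a determinantal process on $(-1,1)\setminus\{0\}$ with a continuous Whittaker-type correlation kernel $K(x,y)$. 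The first step would be to set up this correspondence carefully enough that statements about $\M$ translate directly into statements about the limiting determinantal process.

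With the determinantal description in place, Conjecture \ref{simple-assumption} becomes essentially automatic: for any simple determinantal process the diagonal of the two-point correlation function vanishes, $\rho_2(x,x) = K(x,x)^2 - K(x,x)^2 = 0$, so the underlying configuration has no multiple points almost surely. Pulled back to $\Omega$, this says that the positive $\alpha_i$ are pairwise distinct, and similarly for the positive $\beta_j$, which is exactly the assertion of Conjecture \ref{simple-assumption}.

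For Conjecture \ref{boundary-assumption} I would try to establish the identity
\begin{equation*}
\E_{\M}\!\left[\sum_{i\geq 1}\alpha_i + \sum_{j\geq 1}\beta_j\right] \;=\; \int_{-1}^{1}|x|\,K(x,x)\,dx \;=\; 1,
\end{equation*}
where the first equality is the standard formula for the expectation of a linear statistic of a simple point process in terms of its one-point function, and the second is inherited from the trivial prelimit identity $\sum_i \lambda_i/n \equiv 1$ on $\mathbb Y_n$. Combined with the pointwise inequality $\sum_i\alpha_i+\sum_j\beta_j \leq 1$ on $\Omega$, the vanishing expectation of the nonnegative random variable $\gamma = 1-\sum_i\alpha_i-\sum_j\beta_j$ forces $\gamma=0$ $\M$-almost surely.

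The genuinely hard part, I expect, is the second equality $\int|x|K(x,x)\,dx = 1$: even with the prelimit identity $\sum_i\lambda_i/n\equiv 1$ in hand, passing to the continuum limit requires a nontrivial interchange of limit and integral for a test function $|x|$ acting against a kernel $K^{(n)}(x,x)$ whose mass is most concentrated precisely where the test function vanishes. In particle-system terms, one must rule out a macroscopic escape of mass to the origin under the rescaling, i.e.\ show that the rescaled total length carried by the short rows and columns of $\lambda$ under $M^{(n)}_{z,z'}$ does not accumulate to a positive fraction at $0$. Carrying this out requires uniform-in-$n$ tail estimates on the discrete prelimit kernel near the origin, which is the genuine technical substance of the cited works \cite{Olsh98, Bor98} and the real obstacle to extending the result to general $\theta$.
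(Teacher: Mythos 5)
The paper does not prove this statement itself --- it is imported verbatim from \cite{Olsh98} and \cite{Bor98} --- and your sketch is a faithful reconstruction of exactly the strategy used there: at $\theta=1$ the pushforward of $\M$ to configurations on $(-1,1)$ via modified Frobenius coordinates is a determinantal process with a Whittaker-type kernel; simplicity of that process (more precisely: the second correlation \emph{measure} has a density against Lebesgue measure and hence does not charge the diagonal, which is the correct direction of the implication you state slightly backwards) gives Conjecture \ref{simple-assumption}; and the identity $\int_{-1}^{1}|x|\,K(x,x)\,dx=1$ combined with $\gamma\ge 0$ gives Conjecture \ref{boundary-assumption}. You have also correctly located the genuine technical content of the cited works, namely ruling out an escape of mass to the origin when passing from the prelimit hypergeometric kernels to the limiting one.
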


Let us briefly comment why we need these assumptions. In view of Theorem \ref{density-theorem} the first conjecture implies that at any fixed moment $t>0$ we have $X_{z,z';\theta}(t)\in\Omega_0$ a.s., where
$$
\Omega_0=\{(\alpha;\beta)\in\Omega: \gamma=0\}=\{(\alpha;\beta)\in\Omega: \sum_{i\geq 1} \alpha_i+\sum_{j\geq 1} \beta_j=1\}.
$$ 
This is the first step of our proof of Theorem \ref{boundary-result}, which claims that for any $\omega\in\Omega$
\be
\mathbb P_{\omega}(X_{z,z';\theta}(t)\in\Omega_0, \forall t>0)=1.
\ee 
Conjecture \ref{simple-assumption} significantly simplifies our computations of Dirichlet forms in Section \ref{Ecomp-sect}, however we suspect that it is not needed. 

\begin{rem} There is one case when Conjectures \ref{boundary-assumption}, \ref{simple-assumption} are easy to show. Namely, in the regime 
$$
\theta\to 0,\quad z\to 0,\quad z'\to 0, \quad zz'\theta^{-1}\to \tau>0
$$
the measures $\M$ converge weakly to the Poisson-Dirichlet measures $PD(0;\tau)$ on the simplex $\overline{\nabla}_\infty=\{(\alpha,0)\subset\Omega\}$ obtained by setting all $\beta$-coordinates to $0$. Then Conjectures \ref{boundary-assumption}, \ref{simple-assumption} can be seen immediately from the ``stick-breaking" construction of the Poisson-Dirichlet measures (see \cite{PY97}): $PD(0;\tau)$ is the law of the non-increasing infinite sequence consisting of numbers 
$$
\prod_{i=1}^{k-1} (1-Y_i) Y_k, \qquad k\in\mathbb Z_{\geq 1},
$$
where $Y_i$ are independent identically distributed variables with $\mathrm{Beta}(1,\tau)$ distributions. In the general case $\theta>0$ there are no known analogues of ``stick-breaking" construction, making Conjectures \ref{boundary-assumption}, \ref{simple-assumption} significantly harder to approach.
\end{rem}

We finish this section by describing the Dirichlet form of $X_{z,z';\theta}$ in terms of the moment coordinates. By \cite[Theorem 9.9]{Olsh09} the pre-generator $A_{z,z';\theta}$ is diagonalizable with a discrete spectrum in $\mathbb R_{\leq 0}$ and an orthonormal eigenbasis in $L^2(\Omega,\M)$,  so the closure $\overline A$ is self-adjoint and the discussion from Section \ref{background-sect} applies in our setting. Let $\calE_{z,z';\theta}$ denote the corresponding Dirichlet form. Note that the domain $\D[\calE_{z,z';\theta}]$ of $\calE_{z,z';\theta}$ contains $\D[A_{z,z';\theta}]=\Lambda^{\circ}$. Define a bi-differential operator $\Gamma$ on $\Lambda^\circ$ by setting
\begin{equation}\label{gammaOperator}
\Gamma(u,v)=\sum_{k,l\geq 1}(k+1)(l+1)(q_{k+l}-q_kq_l)\frac{\partial u}{\partial q_k}\frac{\partial v}{\partial q_l}.
\end{equation}
\begin{prop}\label{gammaForm}
For $u,v\in\Lambda^{\circ}$ we have
$$
\calE_{z,z';\theta}(u,v)=\int_{\Omega}\Gamma(u,v)d\M.
$$
\end{prop}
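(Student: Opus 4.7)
The plan is to reduce the claim to the standard ``carr\'e du champ'' identity plus invariance of $\M$. Since $u,v\in\Lambda^\circ\subset\calD[A_{z,z';\theta}]\subset\calD[\calE_{z,z';\theta}]$, the spectral theorem gives $\calE_{z,z';\theta}(u,v)=-\langle A_{z,z';\theta}u,v\rangle_{\M}$, so it suffices to prove the identity $-\langle A_{z,z';\theta}u,v\rangle_{\M}=\int_{\Omega}\Gamma(u,v)\,d\M$.

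The key algebraic step is to split $A_{z,z';\theta}=A^{(2)}+A^{(1)}$, where $A^{(2)}=\sum_{k,l\geq 1}(k+1)(l+1)(q_{k+l}-q_kq_l)\partial_{q_k}\partial_{q_l}$ is the purely second-order part and $A^{(1)}$ collects the remaining first-order terms of \eqref{Agenequation} (both the explicit drift in the second line and the third-line operator, whose $\partial/\partial q_{k+l+2}$ makes it a first-order operator despite the product $q_kq_l$ in its coefficient). A direct Leibniz computation, using symmetry of the diffusion coefficient $(k+1)(l+1)(q_{k+l}-q_kq_l)$ in $(k,l)$, gives the identity
$$A_{z,z';\theta}(uv)=uA_{z,z';\theta}v+vA_{z,z';\theta}u+2\Gamma(u,v),$$
since the first-order part only contributes $uA^{(1)}v+vA^{(1)}u$ by the ordinary product rule, while the cross terms in $A^{(2)}(uv)$ collapse to $2\Gamma(u,v)$.

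Next I note that $A_{z,z';\theta}$ stabilizes $\Lambda^\circ$: inspection of \eqref{Agenequation} shows each summand, applied to a polynomial in the $q_k$, returns a polynomial in the $q_k$. In particular $uv\in\Lambda^\circ\subset\calD[A_{z,z';\theta}]$, and both $A_{z,z';\theta}(uv)$ and $uA_{z,z';\theta}v$, $vA_{z,z';\theta}u$ are bounded continuous functions on $\Omega$, hence integrable against $\M$. Since $\M$ is invariant for the Feller semigroup $T_t$, differentiating $\int T_t f\,d\M=\int f\,d\M$ at $t=0$ gives $\int_\Omega A_{z,z';\theta}f\,d\M=0$ for every $f\in\calD[A_{z,z';\theta}]$. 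Applying this to $f=uv$, integrating the Leibniz identity, and using $\M$-symmetry $\langle A_{z,z';\theta}u,v\rangle_\M=\langle u,A_{z,z';\theta}v\rangle_\M$ yields
$$0=2\langle A_{z,z';\theta}u,v\rangle_{\M}+2\int_{\Omega}\Gamma(u,v)\,d\M,$$
which rearranges to the desired formula.

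There is no serious obstacle here; the proposition is essentially bookkeeping once one has the algebraic identity and the invariance $\int Af\,d\M=0$. The only mildly nontrivial points are verifying stability of $\Lambda^\circ$ under $A_{z,z';\theta}$ (to ensure $uv$ lies in the generator's domain so invariance applies) and checking that the cancellations in the Leibniz expansion produce exactly the coefficient tensor appearing in \eqref{gammaOperator}; both reduce to direct inspection.
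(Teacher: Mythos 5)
Your proposal is correct and follows essentially the same route as the paper: the Leibniz identity $A_{z,z';\theta}(uv)=uA_{z,z';\theta}v+vA_{z,z';\theta}u+2\Gamma(u,v)$, integration against $\M$, the vanishing of $\int_\Omega A_{z,z';\theta}(uv)\,d\M$ (which the paper gets from symmetry together with $A_{z,z';\theta}1=0$ rather than from differentiating the invariance identity, but this is an equivalent observation), and $\M$-symmetry of the generator. No gaps.
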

\begin{proof}
For $\theta=1$ this is \cite[Theorem 7.4]{BO07} and the general case $\theta>0$ is discussed in \cite[Remark 9.11]{Olsh09}. For completeness we repeat the proof here.

Using the product rule for derivatives note that for any $u,v\in\Lambda^{\circ}$ we have
$$
A_{z,z';\theta}(uv)-(A_{z,z';\theta}u)v-u(A_{z,z';\theta}v)=2\Gamma(u,v).
$$
Taking integral with respect to $\M$ of both sides we get
$$
\langle A_{z,z';\theta}(uv), 1\rangle_{\M}-\langle A_{z,z';\theta}u, v\rangle_{\M}-\langle u, A_{z,z';\theta}v\rangle_{\M}=2\int_{\Omega}\Gamma(u,v)d\M.
$$
Since $A_{z,z';\theta}$ is a symmetric operator in $L^2(\Omega,\M)$, we have 
$$\langle A_{z,z';\theta}(uv), 1\rangle_{\M}=\langle uv, A_{z,z';\theta}1\rangle_{\M}=0$$
$$
-\langle A_{z,z';\theta}u, v\rangle_{\M}=-\langle u, A_{z,z';\theta}v\rangle_{\M}=\langle\sqrt{-\overline{A}_{z,z';\theta}}u, \sqrt{-\overline{A}_{z,z';\theta}}v\rangle_{\M}=\calE(u,v).
$$
So we get $2\calE(u,v)=2\int_{\Omega}\Gamma(u,v)d\M$.
\end{proof}


\begin{section}{Functions $\Q[\phi]$ and $\chi_C$} \label{basicQ-sect}

In this section we introduce functions $\chi_C$ approximating $\alpha_1$ and $\beta_1$. These approximations will play a key role in Theorem \ref{result}, which describes $\calE_{z,z';\theta}$ in terms of natural coordinates.

In what follows $B(X)$ denotes the Banach space of bounded functions on a space $X$ with the supremum norm. To any function $\phi(t)\in B([-\theta;1])$ we can assign a function $\calQ[\phi]$ on $\Omega$ by setting
$$
\calQ[\phi](\alpha;\beta)=\int_{-\theta}^1 \phi(t)\nu_{\alpha;\beta}(dt)=\sum_{i\geq 1}\alpha_i\phi(\alpha_i)+\sum_{j\geq 1}\beta_j\phi(-\theta\beta_j)+\gamma\phi(0),
$$
where $\nu_{\alpha;\beta}$ are Thoma measures from \eqref{thomaMeasuresDef} and $\gamma=1-\sum_{i\geq1}\alpha_i-\sum_{j\geq 1}\beta_j$. Since $\phi$ is bounded $\calQ[\phi](\alpha;\beta)$ is well-defined and since $\nu_{\alpha;\beta}([-\theta;1])=1$ we have 
\begin{equation}
\label{Qnorm}
||\mathcal Q[\phi]||_{\sup}\leq ||\phi||_{\sup},
\end{equation}
where the first norm is the supremum norm in $B(\Omega)$ and the second norm is the supremum norm on $B([-\theta; 1])$. Note that for the moment coordinates we have $q_k=\Q[t^{k}]$.

\begin{prop}
\label{Qcont}
The operator $\Q:B([-\theta; 1])\to B(\Omega)$ has norm $1$ and restricts to $\Q:C([-\theta; 1])\to C(\Omega)$.
\end{prop}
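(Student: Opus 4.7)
The plan is to handle the two claims separately: the norm equality is essentially free, while the continuity statement needs a short polynomial approximation argument.

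For the operator norm, equation \eqref{Qnorm} already gives $\norm{\Q}\le 1$. To obtain the reverse inequality I would simply test against the constant function $\phi\equiv 1$: since $\nu_{\alpha;\beta}$ is a probability measure on $[-\theta;1]$ for every $(\alpha;\beta)\in\Omega$, one has $\Q[1]\equiv 1$ on $\Omega$, so $\norm{\Q[1]}_{\sup}=1=\norm{1}_{\sup}$, and the bound is saturated.

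For the continuity assertion, the strategy is polynomial approximation in the source space combined with the norm bound. First I would observe that $\Q$ sends polynomials to continuous functions on $\Omega$: for $k\ge 1$ we have $\Q[t^k]=q_k$, and the paper has just established $q_k\in C(\Omega)$ using the uniform pointwise bounds $\alpha_n,\beta_n\le 1/n$ that force uniform convergence of the defining series. Together with $\Q[1]\equiv 1$ and linearity of $\Q$, this gives $\Q[p]\in C(\Omega)$ for every $p\in\mathbb R[t]$. Next, given an arbitrary $\phi\in C([-\theta;1])$, the Weierstrass approximation theorem produces polynomials $p_n$ with $p_n\to\phi$ uniformly on $[-\theta;1]$. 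The norm bound then yields
$$
\norm{\Q[p_n]-\Q[\phi]}_{\sup}\le\norm{p_n-\phi}_{\sup}\xrightarrow[n\to\infty]{}0,
$$
so $\Q[p_n]\to\Q[\phi]$ uniformly on $\Omega$. Since $C(\Omega)$ is closed under uniform limits and each $\Q[p_n]$ lies there, I conclude $\Q[\phi]\in C(\Omega)$.

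I do not expect any serious obstacle. The one subtlety worth flagging is that a naive pointwise-in-$(\alpha;\beta)$ argument applied directly to the defining formula for $\Q[\phi]$ would be delicate, because $\gamma=1-\sum_i\alpha_i-\sum_j\beta_j$ is \emph{not} continuous on $\Omega$ in the topology of pointwise convergence, and so the atom $\gamma\phi(0)$ cannot be controlled termwise. The polynomial-approximation route cleanly avoids this by routing every continuity claim through the moment coordinates $q_k$ with $k\ge 1$, which are precisely the continuous functions on $\Omega$ that the construction produces for free.
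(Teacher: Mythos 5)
Your proposal is correct and follows the paper's own proof essentially verbatim: the norm claim from \eqref{Qnorm} together with $\Q[1]=1$, and continuity via Weierstrass approximation by polynomials, $\Q[t^k]=q_k\in C(\Omega)$, and uniform convergence supplied by the norm bound. The remark about why a termwise argument would fail (discontinuity of $\gamma$) is a sensible addition but not needed.
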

\begin{proof}
The statement about norm is equivalent to $\eqref{Qnorm}$ combined with $\Q[1]=1$. So it is enough to prove that $\Q[\phi]$ is continuous for $\phi\in C([-\theta;1])$.
 
Take $\phi\in C([-\theta;1])$. By Weierstrass theorem there is a sequence of polynomials $p_n\in\mathbb R[t]$ converging uniformly on $[-\theta;1]$ to $\phi$. For any $k\geq 1$ we have $\mathcal Q[t^k]=q_{k}\in C(\Omega)$ and $\Q[1]=1\in C(\Omega)$ for $k=0$. Hence $\Q[p_n]$ are continuous functions uniformly converging to $\Q[\phi]$, thus $\Q[\phi]\in C(\Omega)$.
\end{proof}

Our approximations will be given in terms of functions $\Q[p(t)e^{Ct}]$ where $p(t)$ is a polynomial. To work with these functions we first prove several technical results.

\begin{lem}
\label{exasympt}
Let $p(t)\in\mathbb R[t]$. Then for any $\omega=(\alpha;\beta)\in\Omega$ we have
\be
\lim_{C\to\infty}e^{-C \alpha_{1}}\Q[p(t)e^{Ct}](\omega)=p(\alpha_1)\nu_{\alpha;\beta}(\{\alpha_1\}),
\ee 
\be
\lim_{C\to-\infty}e^{C \theta \beta_{1}}\Q[p(t)e^{Ct}](\omega)=p(-\theta\beta_1)\nu_{\alpha;\beta}(\{-\theta\beta_1\}).
\ee
In particular, if $\alpha_{1}>\alpha_{2}$ we have 
\be
\lim_{C\to\infty}e^{-C \alpha_{1}}\Q[p(t)e^{Ct}](\omega)=\alpha_{1}p(\alpha_{1}),
\ee
and if $\beta_{1}>\beta_{2}$ then
\be
\lim_{C\to-\infty}e^{C \theta \beta_{1}}\Q[p(t)e^{Ct}](\omega)=\beta_{1}p(-\theta\beta_{1}).
\ee
\end{lem}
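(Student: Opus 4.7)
The plan is to expand $\mathcal Q[p(t)e^{Ct}](\omega)$ into its three natural pieces coming from the definition of $\nu_{\alpha;\beta}$,
$$
\mathcal Q[p(t)e^{Ct}](\omega)=\sum_{i\geq 1}\alpha_i\,p(\alpha_i)e^{C\alpha_i}+\sum_{j\geq 1}\beta_j\,p(-\theta\beta_j)e^{-C\theta\beta_j}+\gamma\,p(0),
$$
multiply by $e^{-C\alpha_1}$, and take $C\to+\infty$ in each piece separately. The $\alpha$-sum will supply the entire answer; the $\beta$-sum and constant term will be negligible, except in the degenerate case $\alpha_1=0$ where the $\gamma$-term is precisely what reproduces $\nu_{\alpha;\beta}(\{0\})$.

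For the $\alpha$-sum the crucial uniform bound is that $\alpha_i\leq\alpha_1$, so $e^{C(\alpha_i-\alpha_1)}\in[0,1]$ for $C\geq 0$. Setting $M=\sup_{t\in[-\theta,1]}|p(t)|$, the $i$th summand of $e^{-C\alpha_1}\sum_{i}\alpha_i p(\alpha_i)e^{C\alpha_i}$ is dominated by $M\alpha_i$, which is summable because $\sum_i\alpha_i\leq 1$. Dominated convergence with respect to counting measure in $i$ lets me pass the limit inside the sum; only the indices with $\alpha_i=\alpha_1$ contribute, giving $p(\alpha_1)\sum_{i:\alpha_i=\alpha_1}\alpha_i$. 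For the $\beta$-sum, after multiplication by $e^{-C\alpha_1}$, either $\alpha_1>0$ and the whole sum decays like $e^{-C\alpha_1}$ times the summable bound $M\sum_j\beta_j$, or $\alpha_1=0$ and each term $\beta_j p(-\theta\beta_j)e^{-C\theta\beta_j}$ tends to $0$ termwise while being dominated by $M\beta_j$; so again dominated convergence forces the total to $0$. The constant term $\gamma p(0)e^{-C\alpha_1}$ vanishes when $\alpha_1>0$ and equals $\gamma p(0)$ when $\alpha_1=0$. Adding the three limits gives exactly $p(\alpha_1)\,\nu_{\alpha;\beta}(\{\alpha_1\})$, using that $\nu_{\alpha;\beta}(\{0\})=\gamma$ in the case $\alpha_1=0$.

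The case $C\to-\infty$ is handled by the mirror-image argument: multiply by $e^{C\theta\beta_1}$ and use $-\theta\beta_j\geq-\theta\beta_1$, so that $e^{C\theta(\beta_1-\beta_j)}\in[0,1]$ for $C\leq 0$; the roles of $\alpha$ and $\beta$ are swapped throughout. The two special cases at the end are then immediate, since $\alpha_1>\alpha_2$ forces $\nu_{\alpha;\beta}(\{\alpha_1\})=\alpha_1$ (only $i=1$ contributes), and similarly $\beta_1>\beta_2$ forces $\nu_{\alpha;\beta}(\{-\theta\beta_1\})=\beta_1$.

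I do not anticipate a real obstacle: the whole statement is a direct dominated-convergence unwinding of the definition of $\mathcal Q[\,\cdot\,]$, aided by the elementary bound $e^{C(\alpha_i-\alpha_1)}\leq 1$. The only bookkeeping subtlety is the treatment of the degenerate value $\alpha_1=0$ (respectively $\beta_1=0$), where one must remember that $\gamma$ contributes to $\nu_{\alpha;\beta}(\{0\})$ in order to identify the limit with $p(\alpha_1)\,\nu_{\alpha;\beta}(\{\alpha_1\})$.
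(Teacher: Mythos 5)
Your proof is correct and is essentially the paper's argument: the paper applies dominated convergence once to the integral $\int p(t)e^{C(t-\alpha_1)}\,\nu_{\alpha;\beta}(dt)$ using the same bound $|p(t)e^{C(t-\alpha_1)}|\le|p(t)|$ on the support $[-\theta\beta_1,\alpha_1]$ and the pointwise limit $p(t)\1_{t=\alpha_1}$, whereas you unpack $\nu_{\alpha;\beta}$ into its atoms and run the same dominated-convergence argument piece by piece. The identification of the limit with $p(\alpha_1)\nu_{\alpha;\beta}(\{\alpha_1\})$, including the degenerate case $\alpha_1=0$, matches the paper.
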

\begin{proof} We only consider the case $C\to\infty$, the case $C\to-\infty$ is similar.

Note that for fixed $\omega=(\alpha;\beta)$ the measure $\nu_{\alpha;\beta}$ is supported by $[-\theta\beta_1, \alpha_1]$. So if $\{\phi_C(t)\}$ is a family of continuous functions on $[-\theta\beta_1, \alpha_1]$
such that 
\begin{itemize}
\item $\phi_C(t)$ are bounded by a uniform constant independent of $C$ or $t$,
\item $\phi_C(t)$ converge pointwise to $\phi(t)$ as $C\to\infty$,
\end{itemize} 
then by the dominated convergence theorem $\Q[\phi_C](\omega)\to\Q[\phi](\omega)$ as $C\to\infty$.

For any $t\leq \alpha_1$ and $C>0$ we have
\be
|p(t)e^{C(t-\alpha_{1})}|\leq |p(t)|, \quad \lim_{C\to\infty}p(t)e^{C(t-\alpha_{1})}=p(t)\1_{t=\alpha_1},
\ee 
where $\1_{t=\alpha_1}$ is the indicator function of $\{\alpha_1\}$. Hence
\be
\lim_{C\to\infty}e^{-C \alpha_{1}}\Q[p(t)e^{Ct}](\alpha;\beta)=\lim_{C\to\infty}\Q[p(t)e^{C(t-\alpha_1)}](\alpha;\beta)=\Q[p(t)\1_{t=\alpha_{1}}](\alpha;\beta)=p(\alpha_1)\nu_{\alpha;\beta}(\{\alpha_1\}).
\ee
The second claim follows immediately since for $\alpha_1>\alpha_2$ we have $\nu_{\alpha;\beta}(\{\alpha_1\})=\alpha_1$.
\end{proof}

\begin{lem}
\label{expineq}
Fix $\omega=(\alpha;\beta)\in\Omega$. For $k\in\mathbb Z_{\geq 0}$ and $C\in\mathbb R_{>0}$ the following inequalities hold
\be
|\Q[t^ke^{Ct}](\omega)|\leq \max(\alpha_{1}^ke^{\alpha_{1}C}; \theta^k),
\ee
\be
|\Q[t^ke^{-Ct}](\omega)|\leq \max(\theta^k\beta_{1}^ke^{\theta\beta_{1}C}; 1).
\ee
Moreover, we have 
\be
\alpha_{1}e^{\alpha_{1}C} \leq\Q[e^{Ct}](\omega), \quad \beta_{1}e^{\theta\beta_{1}C} \leq\Q[e^{-Ct}](\omega).
\ee
\end{lem}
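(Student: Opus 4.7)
The plan is to expand each $\Q[t^k e^{\pm Ct}](\omega)$ using the definition of the Thoma measure $\nu_{\alpha;\beta}$, bound separately the three resulting pieces---the sum over $\alpha_i$'s, the sum over $(-\theta\beta_j)$'s, and the atom at $0$---and then invoke the normalization $\sum_i\alpha_i + \sum_j\beta_j + \gamma = 1$ to collapse the three bounds into a single maximum.

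For the upper estimate of $|\Q[t^k e^{Ct}](\omega)|$, I would apply the triangle inequality and then exploit monotonicity. On $[0,\alpha_1]$ the map $t\mapsto t^k e^{Ct}$ is non-decreasing for $C\geq 0$, $k\geq 0$, so $\alpha_i^k e^{C\alpha_i}\leq \alpha_1^k e^{\alpha_1 C}$ for every $i\geq 1$. On the $\beta$-side, $\beta_j\leq 1$ and $e^{-C\theta\beta_j}\leq 1$ give $(\theta\beta_j)^k e^{-C\theta\beta_j}\leq\theta^k$. The atom at $0$ contributes $\gamma\cdot 0^k$, which vanishes for $k\geq 1$ and equals $\gamma$ for $k=0$; noting that $\theta^0 = 1 \leq e^{\alpha_1 C}$, this term is in either case at most $\gamma\cdot\max(\alpha_1^k e^{\alpha_1 C},\theta^k)$. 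Multiplying each pointwise bound by its mass and summing then yields
\[
|\Q[t^k e^{Ct}](\omega)| \;\leq\; \max\bigl(\alpha_1^k e^{\alpha_1 C},\,\theta^k\bigr)\Bigl(\sum_{i\geq 1}\alpha_i+\sum_{j\geq 1}\beta_j+\gamma\Bigr) \;=\; \max\bigl(\alpha_1^k e^{\alpha_1 C},\,\theta^k\bigr).
\]
The bound on $|\Q[t^k e^{-Ct}](\omega)|$ is symmetric: now $s\mapsto s^k e^{Cs}$ is non-decreasing on $[0,\theta\beta_1]$, giving $(\theta\beta_j)^k e^{C\theta\beta_j}\leq \theta^k\beta_1^k e^{\theta\beta_1 C}$, while on the $\alpha$-side $\alpha_i^k e^{-C\alpha_i}\leq 1$ since $\alpha_i\in[0,1]$.

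The two lower bounds come essentially for free. Every summand in
\[
\Q[e^{Ct}](\omega) \;=\; \sum_{i\geq 1}\alpha_i e^{C\alpha_i} + \sum_{j\geq 1}\beta_j e^{-C\theta\beta_j} + \gamma
\]
is non-negative, so discarding everything except the $i=1$ term yields $\Q[e^{Ct}](\omega)\geq\alpha_1 e^{\alpha_1 C}$, and the parallel argument applied to $\Q[e^{-Ct}](\omega)$ gives $\Q[e^{-Ct}](\omega)\geq\beta_1 e^{\theta\beta_1 C}$. I do not foresee any real obstacle here; the only minor point of care is the bookkeeping of the $\gamma$-contribution at $k=0$, which is handled as above.
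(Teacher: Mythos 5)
Your proof is correct and follows essentially the same route as the paper: the paper phrases the upper bounds as $|\Q[\phi](\omega)|\leq\norm{\phi}_{C[-\theta\beta_1,\alpha_1]}$ using that $\nu_{\alpha;\beta}$ is a probability measure supported on $[-\theta\beta_1,\alpha_1]$, and then proves exactly your monotonicity estimates for $t^ke^{\pm Ct}$ on $[0,\alpha_1]$ and $[-\theta\beta_1,0]$, while your atom-by-atom summation with the normalization $\sum_i\alpha_i+\sum_j\beta_j+\gamma=1$ is just an unpacked version of the same argument. The lower bounds by discarding all but one nonnegative summand are identical to the paper's.
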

\begin{proof}

Recall that $\nu_{\alpha;\beta}$ is supported by $[-\theta\beta_{1}, \alpha_{1}]$, hence for any $\phi(t)\in C[-\theta\beta_{1},\alpha_{1}]$ we have
\be
\left |\Q[\phi(t)](\omega)\right|\leq \norm{\phi}_{C[-\theta\beta_{1},\alpha_{1}]}.
\ee 
Note that $t^ke^{Ct}$ is increasing on $[0, \alpha_{1}]$, hence for $t\in [0,\alpha_{1}]$ we have
\be
t^ke^{Ct}\leq \alpha_{1}^ke^{C\alpha_{1}},
\ee
\be
t^ke^{-Ct}\leq t^k\leq 1.
\ee
For $t\in[-\theta\beta_{1},0]$ both $|t^k|$ and $e^{-Ct}$ are decreasing, hence
\be
|t^ke^{Ct}|\leq |t^k|\leq \theta^k,
\ee
\be
|t^ke^{-Ct}|\leq \theta^k\beta_{1}^ke^{\theta\beta_{1}C}.
\ee
So 
\be
\left |\Q[t^ke^{Ct}](\omega)\right |\leq \norm{t^ke^{Ct}}_{C[-\theta\beta_1,\alpha_1]}\leq \max(\alpha_1^ke^{\alpha_1C},\theta^k),
\ee
\be
\left |\Q[t^ke^{-Ct}](\omega)\right |\leq \norm{t^ke^{-Ct}}_{C[-\theta\beta_1, \alpha_1]}\leq \max(\theta^k\beta^k_1e^{\theta\beta_1C},1),
\ee
For the last statement note that $e^{\pm Ct}>0$ for any $t$, hence
\be
\Q[e^{Ct}]=\sum_{i\geq 1}\alpha_ie^{C\alpha_i}+\sum_{j\geq 1}\beta_je^{-\theta C\beta_{j}}+\gamma\geq \alpha_1e^{C\alpha_1},
\ee
\be
\Q[e^{-Ct}]=\sum_{i\geq 1}\alpha_ie^{-C\alpha_i}+\sum_{j\geq 1}\beta_je^{\theta C\beta_{j}}+\gamma\geq \beta_1e^{\theta C\beta_1}.
\ee
\end{proof}

Now we are ready to introduce our approximation of $\alpha_1$ and $\beta_1$. Define $\chi_C$ by
\be
\chi_C=C^{-1}\ln(1+\Q[e^{Ct}]).
\ee
The goal of this section is to show that these functions converge to $\alpha_1$ and $\beta_1$ in $L^2(\Omega,\M)$. To do so we use dominated convergence theorem, showing a uniform bound and a pointwise convergence.

\begin{prop}\label{chiBound}
The functions $\chi_C$  are uniformly bounded for all $|C|\geq 1$ by $1+\theta+\ln2$.
\end{prop}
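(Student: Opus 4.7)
The plan is to split the range $|C|\geq 1$ into $C\geq 1$ and $C\leq -1$, bound $\mathcal{Q}[e^{\pm Ct}]$ in each case via Lemma \ref{expineq} applied with $k=0$, and then pass to the logarithm using the elementary estimate $1+x\leq 2\max(1,x)$ for $x\geq 0$. Since $\chi_C$ is invariant in form under $C\to -C$ after swapping the roles of $\alpha_1$ and $\beta_1$, the two cases are symmetric.

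For $C\geq 1$, the integrand $e^{Ct}$ is positive, so $\mathcal{Q}[e^{Ct}]\geq 0$, and Lemma \ref{expineq} gives $\mathcal{Q}[e^{Ct}]\leq \max(e^{\alpha_1 C},1)=e^{\alpha_1 C}$, where the equality uses $\alpha_1\geq 0$ and $C>0$. Consequently $1\leq 1+\mathcal{Q}[e^{Ct}]\leq 2e^{\alpha_1 C}$, and taking logarithms and dividing by $C$ yields
\[
0\leq \chi_C\leq \frac{\ln 2}{C}+\alpha_1\leq \ln 2+1,
\]
using $\alpha_1\leq 1$ and $C\geq 1$.

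For $C\leq -1$, write $C=-C'$ with $C'\geq 1$, so that $\chi_C=-(C')^{-1}\ln(1+\mathcal{Q}[e^{-C't}])$. Lemma \ref{expineq} with $k=0$ gives $0\leq \mathcal{Q}[e^{-C't}]\leq \max(e^{\theta\beta_1 C'},1)=e^{\theta\beta_1 C'}$, hence $0\leq \ln(1+\mathcal{Q}[e^{-C't}])\leq \ln 2+\theta\beta_1 C'$. Multiplying by $-1/C'<0$ flips the inequality and gives
\[
-\ln 2-\theta\leq -\frac{\ln 2}{C'}-\theta\beta_1\leq \chi_C\leq 0,
\]
using $\beta_1\leq 1$ and $C'\geq 1$. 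Combining the two ranges produces $|\chi_C|\leq \ln 2+\max(1,\theta)\leq 1+\theta+\ln 2$ for all $|C|\geq 1$, as claimed. I do not expect any real obstacle here: the argument is a direct application of Lemma \ref{expineq} together with $\ln(1+x)\leq \ln 2+\max(0,\ln x)$, and the only care needed is to keep track of the sign change when dividing by a negative $C$.
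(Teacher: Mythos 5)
Your proof is correct and follows essentially the same route as the paper: both apply Lemma \ref{expineq} with $k=0$ to bound $\mathcal{Q}[e^{Ct}]$ by $e^{|C|\alpha_1}$ or $e^{\theta|C|\beta_1}$ and then absorb the ``$1+$'' into a factor of $2$ before taking logarithms; the paper just merges the two sign cases into one line via $\max(1,e^{|C|\alpha_1},e^{\theta|C|\beta_1})\leq 1+e^{|C|(1+\theta)}$, whereas you treat them separately and obtain the marginally sharper bound $\ln 2+\max(1,\theta)$.
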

\begin{proof}
By Lemma \ref{expineq} we have
$$
\norm{\Q[e^{Ct}]}_{\sup}\leq \max(1; e^{|C|\alpha_1}; e^{\theta|C|\beta_1})\leq1+e^{|C|(1+\theta)},
$$
where the last inequality holds since $\alpha_1,\beta_1\leq 1$. Then
\be
\norm{C^{-1}\ln(1+\Q[e^{Ct}])}_{\sup}\leq |C|^{-1}\ln(2+e^{|C|(1+\theta)})\leq |C|^{-1}\ln(2e^{|C|(1+\theta)})\leq 1+\theta+\ln2.
\ee
\end{proof}

\begin{prop}\label{chiLimit}
The functions $\chi_{C}$ converge pointwise to $\alpha_{1}$ when $C\to\infty$ and to $-\theta \beta_{1}$ when $C\to-\infty$.
\end{prop}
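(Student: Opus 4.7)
The plan is to apply the bounds from Lemma \ref{expineq} (with $k=0$) to sandwich $\Q[e^{Ct}](\omega)$ between explicit exponentials, so that $\chi_C$ becomes trapped between two expressions whose limits are easy to evaluate.

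First I would address the case $C\to+\infty$. Lemma \ref{expineq} with $k=0$ gives directly
\be
\alpha_1 e^{\alpha_1 C}\leq \Q[e^{Ct}](\omega)\leq \max(e^{\alpha_1 C},1)\leq 1+e^{\alpha_1 C}.
\ee
After adding $1$, taking logarithms and dividing by $C>0$, I expect to obtain a sandwich of the form
\be
\frac{\ln(1+\alpha_1 e^{\alpha_1 C})}{C}\leq \chi_C(\omega)\leq \frac{\ln(2+e^{\alpha_1 C})}{C},
\ee
and the remainder of the argument reduces to factoring $e^{\alpha_1 C}$ out of each logarithm. When $\alpha_1>0$, this rewriting produces $\alpha_1+o(1)$ on both sides as $C\to\infty$; when $\alpha_1=0$, both bounds are uniformly bounded in $C$ (the left-hand one is identically zero, the right-hand one equals $\ln 3$) and therefore vanish after division by $C$. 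Either way $\chi_C(\omega)\to\alpha_1$.

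For the case $C\to-\infty$ I would set $C'=-C\to+\infty$ and repeat the argument with the second pair of inequalities from Lemma \ref{expineq}, namely
\be
\beta_1 e^{\theta\beta_1 C'}\leq \Q[e^{-C't}](\omega)\leq 1+e^{\theta\beta_1 C'}.
\ee
Dividing through $\ln(1+\cdot)$ by $-C'<0$ flips the direction of the sandwich, and exactly the same computation yields the limit $-\theta\beta_1$, with the degenerate case $\beta_1=0$ handled as above.

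I do not expect any substantive obstacle here: the whole argument is a squeeze once Lemma \ref{expineq} is in hand. The only mild subtlety worth stating carefully in the write-up is that the degenerate cases $\alpha_1=0$ and $\beta_1=0$ are genuinely covered by the same bounds, since in those cases the relevant exponent vanishes and the maximum $\max(e^{\alpha_1 C},1)$ (respectively $\max(e^{\theta\beta_1 C},1)$) collapses to the harmless constant $1$.
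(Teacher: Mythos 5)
Your squeeze argument is correct, and it takes a slightly different route from the paper. The paper's proof writes $\chi_C(\omega)=\alpha_1+C^{-1}\ln\bigl(e^{-C\alpha_1}+e^{-C\alpha_1}\Q[e^{Ct}](\omega)\bigr)$ and invokes Lemma \ref{exasympt} (a dominated-convergence statement) to identify the exact limit $e^{-C\alpha_1}\Q[e^{Ct}](\omega)\to\nu_{\alpha;\beta}(\{\alpha_1\})$, after which it splits into the cases $\alpha_1=0$ and $\alpha_1>0$ (where the prefactor is $m\alpha_1$ with $m$ the multiplicity of $\alpha_1$). You instead use only the two-sided bounds of Lemma \ref{expineq} with $k=0$, namely $\alpha_1e^{\alpha_1C}\le\Q[e^{Ct}](\omega)\le 1+e^{\alpha_1C}$, and let the division by $C$ wash out the bounded multiplicative discrepancy between the two sides. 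This is marginally more elementary: you never need to know the precise value of the prefactor (so no dominated convergence and no discussion of the multiplicity of the top coordinate), only that it stays between $\alpha_1$ and a constant. The paper's version has the mild advantage that Lemma \ref{exasympt} is needed anyway for the later, more delicate pointwise limits in Proposition \ref{lem444}, so it reuses machinery; but for this proposition alone your sandwich is a complete and clean proof, including the degenerate cases $\alpha_1=0$ and $\beta_1=0$, which you correctly note are absorbed by the same inequalities.
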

\begin{proof}

Fx $\omega=(\alpha,\beta)\in\Omega$. We have
\begin{multline*}
\chi_C(\omega)=C^{-1}\ln(1+\Q[e^{tC}](\omega))=C^{-1}(C\alpha_1+\ln(e^{-C\alpha_1}+e^{-C\alpha_1}\Q[e^{tC}](\omega)))=\\\alpha_1+C^{-1}\ln(e^{-C\alpha_1}+e^{-C\alpha_1}\Q[e^{tC}](\omega)).
\end{multline*}
By Lemma \ref{exasympt} we have $\lim_{C\to\infty}e^{-C\alpha_1}\Q[e^{tC}](\omega)=\nu_{\omega}(\{\alpha_1\})$. Now we have two cases:

{\bfseries Case 1:} $\alpha_1=0$. Then $\nu_{\omega}(\{\alpha_1\})=\gamma$ and we have
$$
\lim_{C\to\infty}\chi_C(\omega)=C^{-1}\ln(1+\gamma)=0=\alpha_1.
$$

{\bfseries Case 2:} $\alpha_1>0$. Then $\nu_{\omega}(\{\alpha_1\})=m\alpha_1>0$ where $m$ is the number of coordinates $\alpha_i$ equal to $\alpha_1$. Then
\be
\lim_{C\to\infty}\ln(e^{-C\alpha_1}+e^{-C\alpha_1}\Q[e^{tC}](\omega))=\ln(m\alpha_1)
\ee
and
\be
\lim_{C\to\infty}\chi_C(\omega)=\alpha_1+\lim_{C\to\infty}C^{-1}\ln(e^{-C\alpha_1}+e^{-C\alpha_1}\Q[e^{tC}](\omega)) =\alpha_1.
\ee

Similarly we have
\begin{multline*}
\chi_C(\omega)=C^{-1}\ln(1+\Q[e^{tC}](\omega))=C^{-1}(-C\theta\beta_1+\ln(e^{C\theta\beta_1}+e^{C\theta\beta_1}\Q[e^{tC}](\omega)))=\\-\theta\beta_1+C^{-1}\ln(e^{C\theta\beta_1}+e^{C\theta\beta_1}\Q[e^{tC}](\omega))\to -\theta\beta_1
\end{multline*}
as $C\to-\infty$.
\end{proof}

\end{section}

\begin{section}{The value of $\calE_{z,z';\theta}$ at natural coordinates.}\label{Ecomp-sect}

In this section we evaluate the Dirichlet form $\calE_{z,z';\theta}$ of $X_{z,z';\theta}$ at the natural coordinates $\alpha_i$ and $\beta_j$.

\subsection{Outline of the computation.}  To demonstrate our argument let us first explain how to evaluate $\calE_{z,z';\theta}$ at $\alpha_1$. Our starting point is Proposition \ref{gammaForm}, which tells how to compute $\calE_{z,z';\theta}(u,v)$ for $u,v\in\Lambda^\circ$. Since $\calE_{z,z';\theta}$ is closed, to extend this computation to $\alpha_1$ and to show that $\alpha_1\in\calD[\calE_{z,z';\theta}]$ it is enough to exhibit a $\calE_1$-Cauchy sequence $f_n\in \mathbb R[q_1,q_2,\dots]$ converging to $\alpha_1$. We can do it in two steps: first we show that the functions $\chi_C$ from the previous section are in $\calD[\calE_{z,z';\theta}]$, then we show that $\chi_C$ is an $\calE_1$-Cauchy sequence converging to $\alpha_1$ as $C\to\infty$. Similarly we can deal with $\beta_1$ by taking $C\to-\infty$ instead.

However, the approach above is not enough to deal with the remaining coordinates $\alpha_i,\beta_j$, since the functions $\chi_C$ approximate only $\alpha_1$ and $\beta_1$. To overcome this issue we note that shifting the $\alpha$-coordinates by one step to the left replaces the moment coordinates $q_k$ with
 $$
 q_k^{(1,0)}:=q_k-\alpha_1^{k+1}=\sum_{i\geq 2}\alpha_2^{k+1}+(-\theta)^k\sum_{j\geq 1}\beta_j^{k+1}.
 $$ 
Using our knowledge about the values of $\calE_{z,z';\theta}$ at $\alpha_1$ and $q_k$ we can compute $\calE_{z,z';\theta}$ at these new moment coordinates and then, using $q_k^{(1,0)}$ instead of $q_k$ to construct $\chi_C$, we get suitable approximations of $\alpha_2$. This idea leads to the inductive computation presented below.

Let $\tau_{\alpha}$ and $\tau_\beta$ denote continuous endomorphisms of $\Omega$ defined by  
\be
\tau_{\alpha}:(\alpha;\beta)\mapsto(\alpha_2,\alpha_3,\dots; \beta_1, \beta_2,,\dots), \quad \tau_{\beta}:(\alpha;\beta)\mapsto(\alpha_1,\alpha_2,\dots; \beta_2,\beta_3,\dots).
\ee
In other words, $\tau_\alpha$ shifts $\alpha$-coordinates one step to the left, while $\tau_\beta$ does the same for $\beta$-coordinates. These maps induce contracting operators $\tau_\alpha^*,\tau_\beta^*: B(\Omega)\to B(\Omega)$ which preserve continuity of functions. For integers $N,M\geq0$ and a function $f\in B(\Omega)$ we use $f^{(N,M)}$ to denote $(\tau_\alpha^*)^N(\tau_\beta^*)^Mf$ and $\Q^{(N,M)}[\phi]$ to denote $(\tau_\alpha^*)^N(\tau_\beta^*)^M\Q[\phi]$. In particular
\be
q^{(N,M)}_k=\Q^{(N,M)}[t^k]=\sum_{i\geq N+1}\alpha_i^{k+1}+(-\theta)^k\sum_{j\geq M+1}\beta_j^{k+1}.
\ee
For the remainder of the section let $\lan f\ran_{\M}$ denote the integration with respect to $\M$. To simplify expressions we also sometimes denote $\alpha_i$ by $x_i$ and $\beta_j$ by $x_{-j}$, assuming that $x_0=\frac{\partial}{\partial x_0}=0$. 

Let $\mathcal P^{(N,M)}$ denote the subalgebra of $C(\Omega)$ generated by 
\begin{equation}\label{NMgen}
\{\alpha_1, \dots, \alpha_N, \beta_1,\dots,\beta_M, q_1^{(N,M)}, q_2^{(N,M)}, \dots\}=\{x_{-M},\dots,x_{N},q_1^{(N,M)},q_2^{(N,M)},\dots\}.
\end{equation}
\begin{lem}
The generators of $\calP^{(N,M)}$ listed in \eqref{NMgen} are algebraically independent.
\end{lem}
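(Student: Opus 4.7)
The plan is to show that any polynomial relation among the listed generators, viewed as elements of $C(\Omega)$, must be trivial. Since a polynomial involves only finitely many variables, it suffices to prove that for every $r\geq 0$, the $N+M+r$ functions $\alpha_1,\dots,\alpha_N,\beta_1,\dots,\beta_M,q_1^{(N,M)},\dots,q_r^{(N,M)}$ are algebraically independent over $\mathbb R$.

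I would restrict attention to the subset of $\Omega$ on which $\alpha_i=0$ for $i>N+r$ and $\beta_j=0$ for $j>M$. On this subset the $\beta$-contribution to $q_k^{(N,M)}$ vanishes and
$$
q_k^{(N,M)}=\sum_{i=N+1}^{N+r}\alpha_i^{k+1}=:p_{k+1}(\alpha_{N+1},\dots,\alpha_{N+r}).
$$
Moreover this subset is parameterized by the open region $U\subset\mathbb R^{N+r+M}$ consisting of tuples with $\alpha_1>\cdots>\alpha_{N+r}>0$, $\beta_1>\cdots>\beta_M>0$, and $\sum\alpha_i+\sum\beta_j<1$. Thus any polynomial $P$ in our generators that vanishes on $\Omega$ satisfies $P\circ\Phi\equiv 0$ on $U$, where
$$
\Phi\colon(\alpha_1,\dots,\alpha_{N+r},\beta_1,\dots,\beta_M)\mapsto(\alpha_1,\dots,\alpha_N,\beta_1,\dots,\beta_M,p_2,\dots,p_{r+1}).
$$

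The final step is a Jacobian computation. In the natural basis, the Jacobian matrix of $\Phi$ has block form
$$
\begin{pmatrix} I_N & 0 & 0 \\ 0 & 0 & I_M \\ 0 & J & 0 \end{pmatrix},
$$
where $J=\bigl((k+1)\alpha_{N+i}^{k}\bigr)_{i,k=1,\dots,r}$. Factoring $\mathrm{diag}(2,\dots,r+1)$ out of the rows of $J$ reduces $\det(J)$ to a Vandermonde-type determinant equal to a nonzero multiple of $\alpha_{N+1}\cdots\alpha_{N+r}\prod_{i<j}(\alpha_{N+j}-\alpha_{N+i})$. This is nonzero throughout $U$, so $\Phi$ is a local diffeomorphism onto an open subset of $\mathbb R^{N+M+r}$, and the fact that $P$ vanishes on this open set forces $P=0$.

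I do not anticipate any serious obstacle: the argument is essentially the standard Jacobian criterion for algebraic independence, combined with the geometric observation that on an open piece of $\Omega$ one may independently prescribe the first $N$ $\alpha$-coordinates, the first $M$ $\beta$-coordinates, and a generic configuration of $r$ additional $\alpha$-coordinates that determines the values of $q_1^{(N,M)},\dots,q_r^{(N,M)}$.
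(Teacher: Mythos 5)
Your proof is correct, but it is a genuinely different argument from the one in the paper. The paper proceeds by induction on $(N,M)$: given a nontrivial relation in $\calP^{(N+1,M)}$, it uses two scaling actions on $\Omega$ (rescaling all coordinates, and rescaling all coordinates except $\alpha_1$) to force the relation to be bi-homogeneous, hence of the form $\alpha_1^k\,\tau_\alpha^*\Phi$ with $\Phi$ a relation in $\calP^{(N,M)}$, contradicting the inductive hypothesis; the base case is the classical independence of power sums. You instead restrict to the finite-dimensional open face of $\Omega$ where only $\alpha_1>\dots>\alpha_{N+r}>0$ and $\beta_1>\dots>\beta_M>0$ are nonzero, observe that there $q_k^{(N,M)}$ becomes the power sum $p_{k+1}(\alpha_{N+1},\dots,\alpha_{N+r})$, and apply the Jacobian criterion; the determinant reduces to a Vandermonde times $\alpha_{N+1}\cdots\alpha_{N+r}$, which is nonzero on your region, so the image of $\Phi$ contains an open set and the vanishing polynomial must be zero. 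Your route is more direct and self-contained: it dispenses with the induction, treats the $\alpha$- and $\beta$-shifts simultaneously, and in effect reproves the base case rather than quoting it. The paper's inductive formulation has the cosmetic advantage of matching the inductive structure of the surrounding section, but nothing in the later arguments depends on that choice. (One immaterial slip: in your block matrix the factor $k+1$ is attached to the row of $J$ indexed by $k$, so strictly you factor $\mathrm{diag}(2,\dots,r+1)$ out of the rows only after transposing your displayed indexing $(i,k)$; the determinant computation is unaffected.)
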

\begin{proof} Proof by induction on $N,M$. The base case $N=M=0$ follows from the algebraic independence of power sums $x_1^k+x_2^k+\dots$ in infinitely many variables. For the inductive step assume that we have proved the claim for $\calP^{(N,M)}$, let us prove it for $\calP^{(N+1,M)}$, the claim for $\calP^{(N,M+1)}$ is proved identically. 

Assume that there is a nontrivial algebraic dependence $\Psi$ in $\calP^{(N+1,M)}$, that is $\Psi$ is a polynomial expression in $\{\alpha_i\}_{i\in[1;N+1]}$, $\{\beta_j\}_{i\in[1;M]}$ and $\{q^{(N+1,M)}_k\}_{k\geq 1}$ such that $\Psi(\omega)=0$  for any $\omega\in\Omega$. Note that for any $\omega=(\alpha;\beta)\in\Omega$ and any $\lambda\in(0,1)$ we can construct a point $\omega_{\lambda}=(\lambda\alpha_1, \lambda\alpha_2,\dots; \lambda\beta_1, \lambda\beta_2,\dots)\in\Omega$. Since $\Psi$ vanishes at such $\omega_\lambda$ for any $\lambda$, each homogenous component of $\Psi$ with respect to the natural grading $\deg(\alpha_i)=\deg(\beta_j)=1$, $\deg\left(q^{(N+1,M)}_k\right)=k+1$ vanishes at any $\omega\in\Omega$. So we can assume that $\Psi$ is homogenous with respect to this grading.

 At the same time for any $\omega=(\alpha;\beta)\in\Omega$ and a scalar $\mu\in(0,1)$ we can construct another point $\omega'_{\mu}=(\alpha_1, \mu\alpha_2,\dots$; $\mu\beta_1, \mu\beta_2,\dots)\in\Omega$ where $\alpha_1$ is unchanged, and the vanishing of $\Psi$ at such points for any $\mu$ leads to a nontrivial dependence $\Psi$ which is in addition homogenous with respect to the modified grading where $\deg(\alpha_1)=0$ but all other degrees are the same. This is only possible if $\Psi$ has the form $\alpha_1^k\tau_\alpha^*\Phi$ where $\Phi\in\calP^{(N,M)}$ is homogenous of some degree $d$ with respect to the natural grading. But then by inductive assumption there exists a point $\omega=(\alpha;\beta)\in\Omega$ such that $\Phi(\omega)\neq 0$ and taking the point 
 $$
 \tilde\omega=\left(\frac12, \frac{\alpha_1}2, \frac{\alpha_2}2,\dots; \frac{\beta_1}2, \frac{\beta_2}2, \frac{\beta_3}2,\dots\right)
 $$
 gives $\Psi(\tilde\omega)=\Phi(\omega)2^{-k-d}\neq0$, leading to contradiction.
 \end{proof}

Since for each $N,M\in\mathbb Z_{\geq 0}$ the generators of $\calP^{(N,M)}$ are independent, we can define a bilinear form $E^{(N,M)}$ with the domain $\mathcal P^{(N,M)}$ by
\begin{equation}
\label{Evaluedef}
E_{z,z';\theta}^{(N, M)}(u,v)=\lan\Gamma^{(N, M)}(u,v)\ran_{\M},
\end{equation}
where
\begin{multline}\label{Gammaval}
\Gamma^{(N, M)}(u,v)=\sum_{i,j=-M}^N(\1_{i=j} x_i-x_ix_j)\frac{\partial u}{\partial x_i}\frac{\partial v}{\partial x_j}\\
+\sum_{i=-M}^{N}\sum_{k\geq1} -(k+1)x_iq_k^{(N,M)}\left(\frac{\partial u}{\partial x_i}\frac{\partial v}{\partial q_k^{(N,M)}}+\frac{\partial v}{\partial x_i}\frac{\partial u}{\partial q_k^{(N,M)}}\right)\\
+\sum_{k,l\geq1}(k+1)(l+1)(q_{k+l}^{(N,M)}-q_k^{(N,M)}q_l^{(N,M)})\frac{\partial u}{\partial q_k^{(N,M)}}\frac{\partial v}{\partial q_l^{(N,M)}}.
\end{multline}
 
Now we can formulate one of our main results.

\begin{theo}
\label{Evalue}
Let $N,M\in\mathbb Z_{\geq0}$ and assume that $\theta=1$ or $\theta$ takes any other positive value for which Conjecture \ref{simple-assumption} holds. Then we have $\calP^{(N,M)}\subset\calD[\calE_{z,z';\theta}]$ and for any $u,v\in\calP^{(N,M)}$
\begin{equation}
\label{Evalueeq}
\calE_{z,z';\theta}(u,v)=E^{(N,M)}_{z,z';\theta}(u,v).
\end{equation}
\end{theo}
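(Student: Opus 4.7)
The plan is to proceed by induction on $N + M$. The base case $N = M = 0$ is immediate: $\calP^{(0,0)} = \Lambda^\circ$, the sums over $i, j \in \{-M, \ldots, N\}$ in \eqref{Gammaval} are empty, $\Gamma^{(0,0)}$ collapses to $\Gamma$ from \eqref{gammaOperator}, and \eqref{Evalueeq} becomes Proposition \ref{gammaForm}. For the inductive step, I would assume the theorem holds for $\calP^{(N, M)}$ and extend it to $\calP^{(N+1, M)}$; the case $\calP^{(N, M+1)}$ is symmetric. Since $q_k^{(N+1, M)} = q_k^{(N,M)} - \alpha_{N+1}^{k+1}$, the algebra $\calP^{(N+1, M)}$ is generated over $\calP^{(N,M)}$ by adjoining the single bounded coordinate $\alpha_{N+1}$; once $\alpha_{N+1}$ is placed in $\calD[\calE_{z,z';\theta}]$ and $\calE_{z,z';\theta}(\alpha_{N+1}, f)$ is identified with $E^{(N+1,M)}(\alpha_{N+1}, f)$ for every $f \in \calP^{(N,M)}$, the Leibniz rule for bounded elements in the domain of the carré-du-champ form described by $\Gamma^{(N,M)}$ extends \eqref{Evalueeq} to all of $\calP^{(N+1,M)}$.

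The core of the inductive step is an $\calE_1$-approximation of $\alpha_{N+1}$ from within $\calP^{(N,M)}$. I would use the shifted approximants
\[
\chi_C^{(N,M)} := C^{-1} \ln\bigl(1 + \Q^{(N,M)}[e^{Ct}]\bigr),
\]
which by Propositions \ref{chiBound} and \ref{chiLimit}, applied to the shifted points $\tau_\alpha^N \tau_\beta^M \omega$, are uniformly bounded in $C$ and converge pointwise to $\alpha_{N+1}$ as $C \to +\infty$, hence in $L^2(\Omega, \M)$ by dominated convergence. To insert $\chi_C^{(N,M)}$ into $\calD[\calE_{z,z';\theta}]$, I would truncate the power series $\ln(1+y) = \sum_{k \geq 1} (-1)^{k+1} y^k / k$ and $e^{Ct} = \sum_{k \geq 0} (Ct)^k / k!$, obtaining approximants lying in $\calP^{(N,M)}$. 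The inductive formula $E^{(N,M)}$, being a bi-derivation in its polynomial arguments, controls the $\calE_1$-norms of the differences, and Proposition \ref{Eprop} then delivers $\chi_C^{(N,M)} \in \calD[\calE_{z,z';\theta}]$ together with the identity
\[
\calE_{z,z';\theta}(\chi_C^{(N,M)}, \chi_{C'}^{(N,M)}) = \int_\Omega \Gamma^{(N,M)}(\chi_C^{(N,M)}, \chi_{C'}^{(N,M)})\, d\M.
\]

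Because $\chi_C^{(N,M)}$ depends only on the $q_k^{(N,M)}$, only the third sum of \eqref{Gammaval} contributes, and the integrand rewrites as a Laplace-transform-type functional of the shifted Thoma measure $\nu_{\tau_\alpha^N \tau_\beta^M \omega}$. Taking $C, C' \to +\infty$ inside the integral and invoking Lemma \ref{exasympt}, the integrand concentrates on the top positive atom of this measure, which by Conjecture \ref{simple-assumption} has mass exactly $\alpha_{N+1}$ almost surely; this produces $\int_\Omega (\alpha_{N+1} - \alpha_{N+1}^2)\, d\M$, matching the value predicted by $E^{(N+1,M)}(\alpha_{N+1}, \alpha_{N+1})$. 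The analogous mixed calculation with $\chi_C^{(N,M)}$ and a test element $f \in \calP^{(N,M)}$ reproduces the cross terms of $\Gamma^{(N+1,M)}$ involving $\alpha_{N+1}$ and the $q_k^{(N+1,M)}$, closing the induction.

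The principal obstacle will be establishing uniform $\M$-integrability of $\Gamma^{(N,M)}(\chi_C^{(N,M)}, \chi_{C'}^{(N,M)})$ as $C, C' \to +\infty$: the crude pointwise estimates from Lemma \ref{expineq} grow exponentially in $|C|$ and must be tamed by the $C^{-2}$ normalization built into $\chi_C^{(N,M)}$ together with the explicit bound on $\partial \chi_C^{(N,M)} / \partial q_k^{(N,M)}$ provided by the $(1 + \Q^{(N,M)}[e^{Ct}])^{-1}$ factor. A secondary subtlety is the essential use of Conjecture \ref{simple-assumption}: without $\M$-negligibility of the coincidence locus $\{\alpha_{N+1} = \alpha_{N+2}\}$, the top atom of the shifted measure could carry mass $k\alpha_{N+1}$ with $k \geq 2$ on a set of positive measure, producing corrections to $\Gamma^{(N+1,M)}$ that are not present in the stated formula.
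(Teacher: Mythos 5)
Your overall strategy coincides with the paper's: induction on $(N,M)$, approximation of $\alpha_{N+1}$ from within $\calP^{(N,M)}$ by $\chi_C^{(N,M)}$, dominated convergence driven by Lemma \ref{exasympt}, Proposition \ref{Eprop} to close the form, and Conjecture \ref{simple-assumption} to discard the coincidence locus. Two steps as written would not go through, however. The truncated Maclaurin series of $\ln(1+y)$ has radius of convergence $1$, while $\Q^{(N,M)}[e^{Ct}]$ takes values of order $e^{C\alpha_{N+1}}$, so the partial sums of $\sum_{k\geq 1}(-1)^{k+1}y^k/k$ evaluated at $y=\Q^{(N,M)}[e^{Ct}]$ diverge rather than converge to $\chi_C^{(N,M)}$. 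What is needed is polynomial approximation uniform on the actual (compact) range: the paper Weierstrass-approximates $e^{Ct}$ in $C^1[-\theta;1]$ and the outer function $x\mapsto p(C^{-1}\ln(1+x))$ in $C^1$ of the image of $\Q^{(N,M)}[\phi]$, and the continuity of $(\phi,\psi)\mapsto\Gamma^{(N,M)}[\phi,\psi]$ in $C^1$-norm (Lemma \ref{Gammaextcont}) is precisely what makes Proposition \ref{Eprop} applicable along this approximation.

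The second gap is the reduction of the inductive step to the single coordinate $\alpha_{N+1}$. Knowing $\alpha_{N+1}\in\calD[\calE_{z,z';\theta}]$ and the values $\calE_{z,z';\theta}(\alpha_{N+1},f)$ for $f\in\calP^{(N,M)}$ does not by itself give $\calE_{z,z';\theta}(p(\alpha_{N+1})u,q(\alpha_{N+1})v)$: the abstract Dirichlet-form Leibniz rule puts products of bounded domain elements back into the domain, but identifying $\calE_{z,z';\theta}$ at such products with $\int\Gamma^{(N+1,M)}(\cdot,\cdot)\,d\M$ amounts to identifying the carr\'e du champ as a function, which is essentially the statement being proved. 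The paper sidesteps this by evaluating $\calE_{z,z';\theta}\bigl(p(\chi_C^{(N,M)})u,\,q(\chi_D^{(N,M)})v\bigr)$ directly for arbitrary polynomials $p,q$ and arbitrary $u,v\in\calP^{(N,M)}$ (Corollary \ref{cor111}) and passing to the joint limit $C,D\to\infty$, so all products in $\calP^{(N+1,M)}=\calP^{(N,M)}[\alpha_{N+1}]$ are handled in one stroke. Your uniform-integrability concern is well placed and is exactly the content of Proposition \ref{lem353535}, resolved by the elementary bound \eqref{ineq} on $(C\lambda x+1)e^{C\lambda x}/\bigl(C(1+xe^{C\lambda x})\bigr)$.
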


Theorem \ref{Evalue} is proved by induction. Note that for $N=M=0$ we have $\mathcal P^{(0,0)}=\Lambda^\circ$ and $\Gamma^{(0,0)}=\Gamma$, so the indiction base follows from Proposition \ref{gammaForm}. The remainder of this section is devoted to the step of induction, but before it let us state a particular case of the theorem mentioned in the introduction. 

\begin{theo}
\label{result}
Let $\calP_{nat}=\mathbb R[\alpha_1,\alpha_2,\dots, \beta_1,\beta_2,\dots]$ and assume that $\theta=1$ or $\theta$ takes any other positive value for which Conjecture \ref{simple-assumption} holds. Then we have $\calP_{nat}\subset\mathcal{D}[\mathcal E_{z,z';\theta}]$ and for $u,v\in\calP_{nat}$
\be
\calE_{z,z';\theta}(u,v)=\int_{\Omega} \Gamma_{\alpha\beta}(u,v) d\M,
\ee
where $\Gamma_{\alpha\beta}$ is a bi-differential operator defined on $\calP_{nat}$ by
$$
\Gamma_{\alpha\beta}(u,v)=\sum_{i\geq1} \alpha_i\frac{\partial u}{\partial \alpha_i}\frac{\partial v}{\partial \alpha_i}+\sum_{j\geq 1} \beta_j\frac{\partial u}{\partial \beta_j}\frac{\partial v}{\partial \beta_j}
-\left(\sum_{i\geq1} \alpha_i \frac{\partial u}{\partial \alpha_i}+\sum_{j\geq 1} \beta_j \frac{\partial u}{\partial \beta_j}\right)\left(\sum_i \alpha_i \frac{\partial v}{\partial \alpha_i}+\sum_{j\geq1} \beta_j \frac{\partial v}{\partial \beta_j}\right).
$$
\end{theo}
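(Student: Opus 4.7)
The plan is to derive Theorem \ref{result} as an essentially immediate corollary of Theorem \ref{Evalue}. First I would observe that any $u, v \in \calP_{nat}$ involves only finitely many of the coordinates $\alpha_i, \beta_j$, so one may pick $N, M \geq 0$ with $u, v \in \mathbb{R}[\alpha_1, \dots, \alpha_N, \beta_1, \dots, \beta_M]$. This polynomial ring sits inside $\calP^{(N,M)}$ as the subalgebra on which no generator $q_k^{(N,M)}$ appears, so Theorem \ref{Evalue} immediately gives $u, v \in \calD[\calE_{z,z';\theta}]$ together with the identity $\calE_{z,z';\theta}(u,v) = \langle \Gamma^{(N,M)}(u,v) \rangle_\M$ under the hypothesis $\theta = 1$ (or, more generally, under Conjecture \ref{simple-assumption}).

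Next I would identify $\Gamma^{(N,M)}(u,v)$ with $\Gamma_{\alpha\beta}(u,v)$ on such $u,v$. Since $u$ and $v$ are independent of every $q_k^{(N,M)}$, all partial derivatives $\partial u/\partial q_k^{(N,M)}$ and $\partial v/\partial q_k^{(N,M)}$ vanish, so the second and third lines of \eqref{Gammaval} drop out. What remains, after splitting the double sum into diagonal and off-diagonal contributions, is
$$
\sum_{i=-M}^N x_i \frac{\partial u}{\partial x_i}\frac{\partial v}{\partial x_i} \;-\; \left(\sum_{i=-M}^N x_i \frac{\partial u}{\partial x_i}\right)\left(\sum_{j=-M}^N x_j \frac{\partial v}{\partial x_j}\right).
$$
Translating back via $x_i = \alpha_i$ for $i > 0$ and $x_{-j} = \beta_j$, and extending each summation range to all positive indices (which adds only zero terms, since $u$ and $v$ are constant in the coordinates past $\alpha_N$ and $\beta_M$), this matches $\Gamma_{\alpha\beta}(u,v)$ exactly.

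I do not anticipate any obstacle at this reduction step. The serious work is absorbed entirely by Theorem \ref{Evalue}, whose proof is where the real difficulty lies: there one must inductively construct $\calE_1$-Cauchy approximations of the natural coordinates $\alpha_i, \beta_j$ out of $\Lambda^\circ$, using the functions $\chi_C$ from Section \ref{basicQ-sect} to capture $\alpha_1$ and $\beta_1$ and the shift operators $\tau_\alpha, \tau_\beta$ (acting on shifted moment coordinates $q_k^{(N,M)}$) to reach the higher-indexed coordinates, and then apply Proposition \ref{Eprop} at each stage to transport the integral identity through the resulting limits.
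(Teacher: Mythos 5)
Your proposal is correct and takes essentially the same route as the paper: the paper's own proof of Theorem \ref{result} likewise just picks $N,M$ large enough that $u,v\in\calP^{(N,M)}$ and invokes Theorem \ref{Evalue}. The additional detail you supply — that for $u,v$ independent of the $q_k^{(N,M)}$ only the first double sum in \eqref{Gammaval} survives and reduces to $\Gamma_{\alpha\beta}(u,v)$ — is left implicit in the paper but is verified correctly in your write-up.
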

\begin{proof}
Let $u,v\in \calP_{nat}$. Note that both $u,v$ depend only on a finite number of coordinates, hence $u,v\in \calP^{(N,M)}$ for large enough $N,M$. Then we can apply Theorem \ref{Evalue}.
\end{proof}

\subsection{Value of $\calE_{z,z';\theta}$ at $\chi_C$.} For the remainder of this section we are proving the inductive step of Theorem \ref{Evalue}, so from now on assume that the theorem holds for fixed $N,M\geq 0$. Our first goal is to evaluate $\calE_{z,z';\theta}$ at the functions $\chi_C^{(N,M)}$, which is done in this subsection. 

Let $C^1[-\theta;1]$ denote the Banach space of differentiable functions $f$ on $[-\theta;1]$ with a continuous derivative $f'$, where the norm is defined by
\be
\norm{f}_{C^1}=\norm{f}_{\sup}+\norm{f'}_{\sup}.
\ee 
For $\phi,\psi\in C^1[-\theta;1]$ and $u\in\calP^{(N,M)}$ define
\begin{equation}\label{tmpGammaDouble}
\Gamma^{(N,M)}[\phi;\psi]=\Q^{(N,M)}[(t\phi)'(t\psi)']-\Q^{(N,M)}[(t\phi)']\Q^{(N,M)}[(t\psi)'],
\end{equation}
\begin{equation}\label{tmpGammaSingle}
\Gamma^{(N,M)}[\phi](u)=-\sum_{i=-M}^Nx_i\Q^{(N,M)}[(t\phi)'-\phi(0)]\frac{\partial u}{\partial x_i} +\sum_{k\geq 1}(k+1)(\Q^{(N,M)}[(t\phi)'t^k]-\Q^{(N,M)}[(t\phi)']q^{(N,M)}_k)\frac{\partial u}{\partial q^{(N,M)}_k}.
\end{equation}
\begin{prop}
\label{prop111}
Suppose that we have $u,v\in\calP^{(N,M)}$, functions $\phi,\psi\in C^1[-\theta;1]$ and functions $f\in C^1(\im_\phi), g\in C^1(\im_\psi)$, where $\im_\eta$ denotes the image of $\eta: [-\theta;1]\to \mathbb R$. Then 
$$f\left(\Q^{(N,M)}[\phi]\right)u, g\left(\Q^{(N,M)}[\psi]\right)v\in\D[\calE_{z,z';\theta}]$$
and
\begin{multline}
\label{EQexpressiongen}
\calE(f(\Q^{(N,M)}[\phi])u, g(\Q^{(N,M)}[\psi])v)=\langle f(\Q^{(N,M)}[\phi])g(\Q^{(N,M)}[\psi])\Gamma^{(N,M)}(u,v)\\
+f'(\Q^{(N,M)}[\phi])g(\Q^{(N,M)}[\psi])u\Gamma^{(N,M)}[\phi](v)+f(\Q^{(N,M)}[\phi])g'(\Q^{(N,M)}[\psi])v\Gamma^{(N,M)}[\psi](u)\\
+f'(\Q^{(N,M)}[\phi])g'(\Q^{(N,M)}[\psi])uv\Gamma^{(N,M)}[\phi;\psi]\rangle_{\M}
\end{multline}
\end{prop}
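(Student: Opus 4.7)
The plan is to reduce to the polynomial case, where the inductive hypothesis Theorem \ref{Evalue} for $(N,M)$ applies, and then pass to the limit via Proposition \ref{Eprop}.

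First I would approximate. By Weierstrass, choose polynomials $p_n\to\phi$ and $q_m\to\psi$ in the $C^1[-\theta;1]$-norm. After extending $f,g$ $C^1$-smoothly to slightly larger closed intervals containing the compact images $\im_\phi,\im_\psi$, choose polynomials $f_n\to f$, $g_m\to g$ in the $C^1$-norm on those intervals. Since $\Q^{(N,M)}[\phi]$ takes values in $\im_\phi$ (being an integral against a probability measure) and $\Q^{(N,M)}[p_n]\to\Q^{(N,M)}[\phi]$ uniformly by Proposition \ref{Qcont}, for large $n$ the composition $F_n:=f_n(\Q^{(N,M)}[p_n])$ is defined, and both $F_n\to F:=f(\Q^{(N,M)}[\phi])$ and $f_n'(\Q^{(N,M)}[p_n])\to f'(\Q^{(N,M)}[\phi])$ uniformly on $\Omega$; analogously for $G_m$ and $g_m'(\Q^{(N,M)}[q_m])$.

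Next, $\Q^{(N,M)}[1]\equiv 1$ implies that for any polynomial $p=\sum c_kt^k$ one has $\Q^{(N,M)}[p]=c_0+\sum_{k\geq 1}c_kq_k^{(N,M)}\in\calP^{(N,M)}$, so $U_n:=F_nu$ and $V_m:=G_mv$ lie in $\calP^{(N,M)}$. The inductive hypothesis gives $\calE_{z,z';\theta}(U_n,V_m)=\int_\Omega \Gamma^{(N,M)}(U_n,V_m)\,d\M$, and I would compute $\Gamma^{(N,M)}(U_n,V_m)$ by the product/chain rule. The crucial observation is that $\partial_{x_i}\Q^{(N,M)}[p_n]=0$ while $\partial_{q_k^{(N,M)}}\Q^{(N,M)}[p_n]=[t^k]p_n$, so the only chain-rule contributions hitting $F_n,G_m$ come through the $q_k^{(N,M)}$-derivatives. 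A direct, bookkeeping-heavy expansion, using $(tp)'(t)=\sum_k(k+1)c_kt^k$ and the identity $\Q^{(N,M)}[(tp)'-p(0)]=\sum_{k\geq 1}(k+1)c_kq_k^{(N,M)}$, identifies the four resulting blocks with $F_nG_m\Gamma^{(N,M)}(u,v)$, $f_n'(\Q^{(N,M)}[p_n])G_mu\,\Gamma^{(N,M)}[p_n](v)$, $F_ng_m'(\Q^{(N,M)}[q_m])v\,\Gamma^{(N,M)}[q_m](u)$, and $f_n'g_m'uv\,\Gamma^{(N,M)}[p_n;q_m]$ respectively—the polynomial instance of the integrand on the right-hand side of \eqref{EQexpressiongen}.

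Finally I would pass to the limit. Convergence $p_n\to\phi$ and $q_m\to\psi$ in $C^1$ yields uniform convergence of $(tp_n)'$, $(tp_n)'t^k$, and $(tp_n)'(tq_m)'$ on $[-\theta;1]$, which Proposition \ref{Qcont} promotes to uniform convergence on $\Omega$ of the $\Q^{(N,M)}$-images. Combined with the uniform convergence from the first step and the automatic boundedness of the polynomials $u,v$ and their finitely many nonzero partial derivatives on compact $\Omega$, this gives uniform—hence $L^1(\Omega,\M)$—convergence of $\Gamma^{(N,M)}(U_n,V_m)$ to the right-hand side of \eqref{EQexpressiongen}. Thus the joint limit $\lim_{n,m}\calE_{z,z';\theta}(U_n,V_m)$ exists, and since $U_n\to Fu$ and $V_m\to Gv$ uniformly (hence in $L^2$), Proposition \ref{Eprop} delivers $Fu,Gv\in\D[\calE_{z,z';\theta}]$ together with the formula \eqref{EQexpressiongen}. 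The main obstacle is the bookkeeping in the chain-rule expansion, in particular verifying that the $\phi(0)$-correction in the definition of $\Gamma^{(N,M)}[\phi]$ is exactly what the polynomial calculation produces; the remainder is soft analysis.
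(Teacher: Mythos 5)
Your proposal is correct and follows essentially the same route as the paper: Weierstrass approximation in the $C^1$-norm, the inductive identity $\calE_{z,z';\theta}=E^{(N,M)}_{z,z';\theta}$ plus the chain/product rule for $\Gamma^{(N,M)}$ in the polynomial case (the paper isolates this bookkeeping as Lemmas \ref{lem111} and \ref{lem222}), and then Proposition \ref{Eprop} to pass to the limit. The only organizational difference is that you approximate $\phi,\psi$ and $f,g$ simultaneously (which forces your $C^1$-extension of $f,g$ to slightly larger intervals so that $f_n(\Q^{(N,M)}[p_n])$ converges correctly), whereas the paper does two successive limits — first $\phi,\psi$ with polynomial $f,g$, then $f,g$ — thereby avoiding the extension step; both versions are sound.
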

The proof is based on polynomial approximation and is split in a number of lemmas. Since $N,M$ are fixed,  for the duration of this proof we replace the superscript $(N,M)$ by $\sim$, writing $\Gamma^\sim, q^\sim_k, \calQ^\sim[\phi],\calP^\sim$ instead of $\Gamma^{(N,M)}$, $q_k^{(N,M)}$, $\calQ^{(N,M)}[\phi]$ and $\calP^{(N,M)}$. 

\begin{lem}
\label{lem000}
For any bounded $\phi\in B([-\theta;1])$ we have $||\calQ^\sim[\phi]||_{\sup}\leq||\phi||_{\sup}$.
\end{lem}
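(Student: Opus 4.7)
The plan is to chain together two facts that are already in hand: the bound $||\calQ[\phi]||_{\sup}\leq||\phi||_{\sup}$ from \eqref{Qnorm} in Proposition \ref{Qcont}, and the fact, noted right after the definition of $\tau_\alpha, \tau_\beta$, that the induced pullback operators $\tau_\alpha^*, \tau_\beta^*$ are contractions on $B(\Omega)$. By the definition of $\calQ^\sim$, we have $\calQ^\sim[\phi]=(\tau_\alpha^*)^N(\tau_\beta^*)^M\calQ[\phi]$, so applying the contraction property $N+M$ times and then \eqref{Qnorm} gives
\[
||\calQ^\sim[\phi]||_{\sup} \;\leq\; ||\calQ[\phi]||_{\sup} \;\leq\; ||\phi||_{\sup}.
\]

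There is no real obstacle here — the lemma is essentially a one-line bookkeeping step whose purpose is to isolate the basic norm control in a form convenient for the polynomial-approximation arguments that feed into Proposition \ref{prop111}. If one preferred a direct hands-on check, the same bound is visible from the identity
\[
\calQ^\sim[\phi](\alpha;\beta)=\int_{-\theta}^1\phi(t)\,\nu_{\tau_\alpha^N\tau_\beta^M(\alpha;\beta)}(dt),
\]
since $\tau_\alpha^N\tau_\beta^M$ maps $\Omega$ into itself (the coordinates remain nonincreasing and the total mass only decreases, so the defining inequalities of $\Omega$ are preserved), and the right-hand side is the integral of $\phi$ against a probability measure on $[-\theta;1]$ and hence is bounded in absolute value by $||\phi||_{\sup}$.
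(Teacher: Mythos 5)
Your proposal is correct and matches the paper's argument: the paper's one-line proof likewise observes that $\calQ^{(N,M)}[\phi]$ is the pullback of $\calQ[\phi]$ along $\tau_\alpha^N\tau_\beta^M$ and invokes Proposition \ref{Qcont} (i.e.\ \eqref{Qnorm}). Your alternative direct verification via the probability measure $\nu_{\tau_\alpha^N\tau_\beta^M(\alpha;\beta)}$ is also fine but adds nothing beyond the same observation.
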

\begin{proof}
Immideatly follows from Proposition \ref{Qcont} since $\calQ^{(N,M)}[\phi]$ is the restriction of the unshifted $\calQ[\phi]$ to $\tau^N_\alpha\tau^M_\beta\Omega$.
\end{proof}

\begin{lem}
\label{lem111}
For polynomials $\phi, \psi\in\R[t]$ we have
\be
\Gamma^\sim(\Q^\sim[\phi], \Q^\sim[\psi])=\Gamma^\sim[\phi,\psi],
\ee
where the right-hand side is defined by \eqref{tmpGammaDouble}.
\end{lem}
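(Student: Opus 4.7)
The plan is to exploit a key observation: when $\phi$ is a polynomial, $\mathcal{Q}^\sim[\phi]$ depends only on the shifted moment coordinates $q_k^\sim$ and carries no dependence on the natural coordinates $x_{-M}, \ldots, x_N$. Writing $\phi(t) = \sum_{k \geq 0} a_k t^k$, we have $\mathcal{Q}^\sim[t^0] = 1$ (since the pushed-forward Thoma measure still has total mass $1$) and $\mathcal{Q}^\sim[t^k] = q_k^\sim$ for $k \geq 1$ by the very definition of shifted moment coordinates. Hence
\[
\mathcal{Q}^\sim[\phi] = a_0 + \sum_{k \geq 1} a_k\, q_k^\sim,
\]
a linear polynomial in the $q_k^\sim$'s. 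The same representation holds for $\psi = \sum_l b_l t^l$.

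Substituting these expressions into the defining formula \eqref{Gammaval} of $\Gamma^{(N,M)}$, the derivatives $\partial \mathcal{Q}^\sim[\phi]/\partial x_i$ and $\partial \mathcal{Q}^\sim[\psi]/\partial x_j$ all vanish, so the first two sums on the right-hand side of \eqref{Gammaval} contribute nothing. Only the $q_k^\sim$--$q_l^\sim$ block survives, and a direct computation gives
\[
\Gamma^\sim(\mathcal{Q}^\sim[\phi], \mathcal{Q}^\sim[\psi]) = \sum_{k,l \geq 1} (k+1)(l+1)(q_{k+l}^\sim - q_k^\sim q_l^\sim)\, a_k b_l.
\]

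The second step is to match this with the right-hand side $\Gamma^\sim[\phi;\psi]$ by a short algebraic manipulation. Since $(t\phi)' = \sum_{k \geq 0}(k+1) a_k t^k$ and similarly for $(t\psi)'$, linearity of $\mathcal{Q}^\sim$ gives
\[
\mathcal{Q}^\sim[(t\phi)'] = a_0 + \sum_{k \geq 1}(k+1) a_k q_k^\sim, \qquad \mathcal{Q}^\sim[(t\phi)'(t\psi)'] = a_0 b_0 + \sum_{\substack{k,l\geq 0\\ k+l \geq 1}} (k+1)(l+1) a_k b_l\, q_{k+l}^\sim.
\]
Taking the difference $\mathcal{Q}^\sim[(t\phi)'(t\psi)'] - \mathcal{Q}^\sim[(t\phi)']\mathcal{Q}^\sim[(t\psi)']$, the terms with $k=0$ or $l=0$ cancel exactly (the pairs $a_0 b_0$, $a_0 \sum (l+1) b_l q_l^\sim$, and $b_0 \sum (k+1) a_k q_k^\sim$ appear with opposite signs), leaving
\[
\Gamma^\sim[\phi;\psi] = \sum_{k,l \geq 1}(k+1)(l+1)(q_{k+l}^\sim - q_k^\sim q_l^\sim)\, a_k b_l,
\]
which coincides with $\Gamma^\sim(\mathcal{Q}^\sim[\phi], \mathcal{Q}^\sim[\psi])$.

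There is no real obstacle here: the lemma is essentially a bookkeeping identity. The only thing worth being careful about is the edge case $k = 0$ or $l = 0$, since $\mathcal{Q}^\sim[t^0]$ equals $1$ rather than $q_0^\sim$ (which is not a coordinate), and this is what makes the $k=0,l=0$ contributions cancel cleanly between the two terms of $\Gamma^\sim[\phi;\psi]$. The identity is the polynomial prototype of the more general smooth formula \eqref{EQexpressiongen} that will be proved by approximation in the subsequent lemmas.
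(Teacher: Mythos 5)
Your proof is correct and is essentially the paper's argument: both rest on bilinearity, the identification $\mathcal Q^\sim[t^k]=q_k^\sim$ for $k\ge 1$ together with $\mathcal Q^\sim[1]=1$, and a check that the $k=0$ or $l=0$ contributions cancel (the paper organizes this as a case-by-case verification on monomials, you as an explicit cancellation of the constant-coefficient cross terms, which amounts to the same computation). No gaps.
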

\begin{proof}
By linearity it is enough to consider $\phi=t^k,\psi=t^l$. Moreover, using symmetry we may assume $k\leq l$.  

For $l\geq k\geq 1$ we have
\begin{multline*}
\Gamma^\sim(\mathcal Q^\sim[t^{k}],\mathcal Q^\sim[t^{l}])=\Gamma^\sim(q^\sim_k,q^\sim_l)=(k+1)(l+1)(q^{\sim}_{k+l}-q^\sim_kq^\sim_l)\\
=\mathcal Q^\sim[(t^{k+1})'(t^{l+1})']-\mathcal Q^\sim[(t^{k+1})']\mathcal Q^\sim[(t^{l+1})']=\Gamma^\sim[t^k,t^l].
\end{multline*}

For $k=0$ note that $\Q^\sim[t^k]\equiv 1$ and
\be
\Gamma^\sim(\Q^\sim[t^k],\Q^\sim[t^l])=0=\Q^\sim[(t^{l+1})']-\Q^\sim[(t^{l+1})']=\Q^\sim[(t^{k+1})'(t^{l+1})']-\Q^\sim[(t^{k+1})']\Q^\sim[(t^{l+1})'].
\ee
\end{proof}

\begin{lem}
\label{lem222}
For a polynomial $\phi\in\R[t]$ and $u\in\calP^\sim$ we have
\be
\Gamma^\sim(\mathcal Q^\sim[\phi], u)=\Gamma^\sim[\phi](u),
\ee
where the right-hand side is defined by \eqref{tmpGammaSingle}.
\end{lem}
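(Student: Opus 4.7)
The plan is to reduce the identity to monomials by bilinearity and then verify it on the two resulting cases by direct computation. Both $\calQ^\sim[\phi]$ and $\Gamma^\sim[\phi](u)$ are built from the operations $\phi\mapsto(t\phi)'$, $\phi\mapsto\phi(0)$ and $\phi\mapsto\calQ^\sim[\phi]$, each of which is linear in $\phi$, so it suffices to check the identity for $\phi=t^k$ with $k\geq 0$. The algebraic independence of the generators of $\calP^\sim$ established in the preceding lemma guarantees that the symbols $\partial/\partial x_i$ and $\partial/\partial q^\sim_k$ appearing in \eqref{Gammaval} and \eqref{tmpGammaSingle} are well-defined partial derivations on $\calP^\sim$, which is what makes both sides of the claim meaningful in the first place.

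The main step is the generic case $k\geq 1$, where $\calQ^\sim[t^k]=q^\sim_k$ is itself one of the generators of $\calP^\sim$. The key point is that the tail sum $q^\sim_k=\sum_{i\geq N+1}\alpha_i^{k+1}+(-\theta)^k\sum_{j\geq M+1}\beta_j^{k+1}$ omits the coordinates $x_{-M},\dots,x_N$, so $\partial q^\sim_k/\partial x_i\equiv 0$ for $i\in[-M,N]$ while $\partial q^\sim_k/\partial q^\sim_l=\delta_{kl}$. Substituting $v=q^\sim_k$ into the explicit formula \eqref{Gammaval} therefore kills the double-$x$ sum entirely and collapses the cross and double-$q$ sums to an expression linear in the partial derivatives of $u$ (namely, an $x$-sum with coefficient $-(k+1)x_iq^\sim_k$ and a $q$-sum with coefficient $(k+1)(l+1)(q^\sim_{k+l}-q^\sim_kq^\sim_l)$). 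Comparing with \eqref{tmpGammaSingle} at $\phi=t^k$, where $(t\phi)'=(k+1)t^k$ and $\phi(0)=0$ give $\calQ^\sim[(t\phi)'-\phi(0)]=(k+1)q^\sim_k$ and $\calQ^\sim[(t\phi)'t^l]=(k+1)q^\sim_{k+l}$, the two sides agree term by term.

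The degenerate case $k=0$, $\phi\equiv 1$, is the one where the subtraction $-\phi(0)$ in \eqref{tmpGammaSingle} earns its keep: $\calQ^\sim[1]\equiv 1$, so $\Gamma^\sim(1,u)=0$ by the derivation property of $\Gamma^\sim$ in each slot, while on the right hand side $(t\phi)'-\phi(0)=0$ annihilates the $x$-sum and $\calQ^\sim[(t\phi)'t^l]-\calQ^\sim[(t\phi)']q^\sim_l=q^\sim_l-q^\sim_l=0$ annihilates the $q$-sum. I do not anticipate any substantive obstacle here; once bilinearity is invoked the problem reduces to unwinding the definitions on these two cases, and the only conceptual subtlety worth flagging is the role of $\phi(0)$ in compensating the constant component of $\phi$, which contributes nothing to $\Gamma^\sim(\calQ^\sim[\phi],u)$ but would otherwise contribute spuriously to $\Gamma^\sim[\phi](u)$.
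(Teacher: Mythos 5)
Your proof is correct and follows essentially the same route as the paper: reduce by linearity in $\phi$ to monomials $t^k$, then verify the identity by unwinding the definition of $\Gamma^\sim$ as a bi-differential operator, with the case $k=0$ handled separately via the $-\phi(0)$ term. The only cosmetic difference is that you substitute $q^\sim_k$ directly into one slot of \eqref{Gammaval} while keeping $u$ general, whereas the paper also reduces $u$ to the generators $x_i$, $q^\sim_k$ and quotes Lemma \ref{lem111} for the latter case; both amount to the same computation.
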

\begin{proof}
Both sides are differential operators in $u$, so it is enough to consider generators of $\calP^\sim$, namely, $u=x_i$ for $-M\leq i\leq N$ and $u=q^\sim_k$ for $k\geq 1$. Similarly by linearity with respect to $\phi$ it is enough to consider $\phi=t^l$. 

If $u=x_i$ and $\phi=t^l$ for $l\geq 1$ we have
\be
\Gamma^\sim(\mathcal Q^\sim[t^{l}],x_i)=\Gamma^\sim(q^\sim_l,x_i)=-(l+1)x_iq^\sim_l=-x_i\mathcal Q^\sim[(t\phi)'-\phi(0)],
\ee
while for $u=x_i$ and $\phi=1$ we have
\be
\Gamma^\sim(\mathcal Q^\sim[\phi],x_i)=0=-x_i\mathcal Q^\sim[(t\phi)'-\phi(0)].
\ee
The remaining case $u=q^\sim_k=\Q^\sim[t^k]$ follows directly from Lemma \ref{lem111}:
$$
\Gamma^\sim(\mathcal Q^\sim[\phi], \Q^\sim[t^k])=\Gamma^\sim[\phi,t^k]=\Gamma^\sim[\phi](q_k^\sim),
$$
\end{proof}

\begin{lem}
\label{Gammaextcont} $(\psi,\phi)\mapsto\Gamma^\sim[\psi,\phi]$ is a continuous map $C^1[-\theta;1]\times C^1[-\theta;1]\to C(\Omega)$ and for any $u\in\calP^\sim$
the assignment $\psi\mapsto\Gamma^\sim[\psi](u)$ is a continuous map $C^1[-\theta;1]\to C(\Omega)$.
\end{lem}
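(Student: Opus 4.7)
The plan is to realize each of these maps as a composition of three elementary continuous operations: the map $\phi \mapsto (t\phi)'$, pointwise multiplication of continuous functions, and the operator $\calQ^\sim$. There is no genuine analysis to do beyond bounded (bi)linear estimates together with the fact that a polynomial depends on only finitely many variables.

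First I would observe that $(t\phi)' = \phi + t\phi'$, so $\phi \mapsto (t\phi)'$ is a bounded linear map $C^1[-\theta;1] \to C[-\theta;1]$ with operator norm at most $1+\max(1,\theta)$. Combining this with the norm bound $\|\calQ^\sim[\phi]\|_{\sup} \leq \|\phi\|_{\sup}$ from Lemma \ref{lem000}, the composition $\phi \mapsto \calQ^\sim[(t\phi)']$ is a bounded linear map $C^1[-\theta;1] \to C(\Omega)$, hence continuous. The same reasoning will apply to $\phi \mapsto \calQ^\sim[(t\phi)'\,t^k]$ (with $t^k$ a fixed continuous function on $[-\theta;1]$) and to $\phi \mapsto \calQ^\sim[(t\phi)'-\phi(0)]$, since $\phi \mapsto \phi(0)$ is a continuous linear functional on $C^1[-\theta;1]$.

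For the first assertion I would decompose
\[
\Gamma^\sim[\phi,\psi] \;=\; \calQ^\sim\bigl[(t\phi)'(t\psi)'\bigr] \;-\; \calQ^\sim[(t\phi)']\,\calQ^\sim[(t\psi)']
\]
and treat the two summands separately. The first summand is jointly continuous in $(\phi,\psi)$ because pointwise multiplication is a continuous bilinear map $C[-\theta;1]\times C[-\theta;1] \to C[-\theta;1]$, followed by the continuous linear operator $\calQ^\sim$. The second summand is jointly continuous because pointwise multiplication is a continuous bilinear map $C(\Omega)\times C(\Omega) \to C(\Omega)$ and each factor depends continuously on its argument by the previous paragraph.

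For the second assertion the key observation is that any fixed $u \in \calP^\sim$ is a polynomial in finitely many generators, say in $x_{-M},\ldots,x_N$ and $q_1^\sim, \ldots, q_K^\sim$ for some $K=K(u)$. Consequently $\partial u / \partial q_k^\sim = 0$ for all $k > K$, so the nominally infinite sum defining $\Gamma^\sim[\phi](u)$ truncates to a finite sum of at most $N+M+K+1$ terms. Each such term is a product of a $C(\Omega)$-valued function of $\phi$ that is continuous by the first paragraph with a fixed element of $C(\Omega)$ (namely a partial derivative of $u$), so each term is continuous in $\phi$; a finite sum of continuous maps is continuous. The main obstacle, such as it is, is purely bookkeeping: one must verify that every ingredient in the definitions of $\Gamma^\sim[\phi,\psi]$ and $\Gamma^\sim[\phi](u)$ fits into one of the three continuous operations listed at the outset, and that the polynomial nature of $u$ is what renders the sum finite.
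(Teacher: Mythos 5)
Your proposal is correct and follows essentially the same route as the paper: both arguments reduce to the bound $\|(t\phi)'\|_{\sup}\leq(1+\theta)\|\phi\|_{C^1}$, the contraction property $\|\calQ^\sim[\phi]\|_{\sup}\leq\|\phi\|_{\sup}$ of Lemma \ref{lem000}, and the observation that the sum over $k$ in $\Gamma^\sim[\phi](u)$ is finite because $u$ involves only finitely many generators. The paper phrases this as explicit norm estimates on the bilinear and linear maps rather than as a composition of elementary continuous operations, but the content is identical.
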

\begin{proof}
Note that on $[-\theta;1]$ we have
\be
||(t\phi)'||_{\sup}\leq||t\phi'||_{\sup}+||\phi||_{\sup}\leq(1+\theta)||\phi'||_{\sup}+||\phi||_{\sup}\leq(1+\theta)||\phi||_{C^1}.
\ee
Also recall that $||\calQ^\sim[\phi]||_{\sup}\leq||\phi||_{\sup}$ by Lemma \ref{lem000}. Then for any $(\phi,\psi)\in C^1[-\theta;1]\times C^1[-\theta;1]$ we have
\be
||\Gamma^\sim[\psi;\phi]||_{\sup}\leq ||(t\phi)'(t\psi)'||_{\sup}+||(t\phi)'||_{\sup}||(t\psi)'||_{\sup}\leq2||(t\phi)'||_{\sup}||(t\psi)'||_{\sup}\leq 2(1+\theta)^2||\psi||_{C^1}||\phi||_{C^1}.
\ee
This implies the first continuity.

Similarly we have
\begin{multline*}
||\Gamma^\sim[\phi](u)||_{\sup}\leq \sum_{i=-M}^N\norm{(t\phi)'-\phi(0)}_{\sup}\norm{x_i\frac{\partial u}{\partial x_i}}_{\sup}
+\sum_{k\geq 1}2(k+1)\norm{(t\phi)'}_{\sup}\norm{t^k}_{\sup}\norm{\frac{\partial u}{\partial q_k}}_{\sup}\\
\leq K(\norm{(t\phi)'}_{\sup}+|\phi(0)|)\leq 2K(1+\theta)\norm{\phi}_{C^1},
\end{multline*}
where $K$ is a constant depending only on $u$ (note that the second sum in the middle expression has finitely many non-zero terms since $u\in\calP^\sim$).
\end{proof}

\begin{proof}[Proof of Proposition \ref{prop111}]
Fix $u,v\in\calP^\sim$ from the statement and for $\phi,\psi\in C^1[-\theta;1], f\in C^1(\im_\phi), g\in C^1(\im_\psi)$ let $\Psi(\phi,\psi,f,g)$ denote the right-hand side of \eqref{EQexpressiongen}. 

First assume that $\phi,\psi,f,g$ are polynomials. Then we have $f(\calQ^\sim[\phi]),g(\calQ^\sim[\psi])\in\calP^\sim$ and by the inductive assumption
$$
\calE_{z,z';\theta}(f(\calQ^\sim[\phi])u,g(\calQ^\sim[\psi])v)=\left\langle\Gamma^\sim(f(\calQ^\sim[\phi])u,g(\calQ^\sim[\psi])v)\right\rangle_{\M}.
$$
Since $\Gamma^\sim$ is a bi-differential operator we have 
\begin{multline*}
\Gamma^\sim(f(\calQ^\sim[\phi])u,g(\calQ^\sim[\psi])v)=f(\calQ^\sim[\phi])g(\calQ^\sim[\psi])\Gamma^\sim(u,v)+\\
f'(\calQ^\sim[\phi])g(\calQ^\sim[\psi])u\Gamma^\sim(\calQ^\sim[\phi],v)+f(\calQ^\sim[\phi])g'(\calQ^\sim[\psi])v\Gamma^\sim(u,\calQ^\sim[\psi])+\\
f'(\calQ^\sim[\phi])g'(\calQ^\sim[\psi])uv\Gamma^\sim(\calQ^\sim[\phi],\calQ^\sim[\psi]).
\end{multline*}
Then Lemmas \ref{lem111} and \ref{lem222} imply \eqref{EQexpressiongen} for this case.

Now take $\phi,\psi\in C^1[-\theta;1]$ and polynomials $f,g\in\R[t]$. By Weierstrass theorem there exist sequences of polynomials $\phi_n, \psi_m$ converging in $C^1[-\theta;1]$ to $\phi,\psi$ respectively. These polynomials are uniformly bounded by some constant $R$, i.e. $\norm{\phi_n}_{\sup}, \norm{\psi_m}_{\sup}\leq R$. Then, since $||\calQ^\sim[\phi_n]||_{\sup}\leq R$ by Lemma \ref{lem000}, we know that $\calQ^\sim[\phi_n]$ and $\calQ^\sim[\phi]$ have values in $[-R,R]$, and similarly for $\psi_m,\psi$. Thus, using the facts that $\calQ^\sim$ is contracting in supremum norm and $f,g,f',g'$ are uniformly continuous on $[-R,R]$, we get the following set of uniform convergences on $\Omega$:
\be
f(\Q^\sim[\phi_n])\rightrightarrows f(\Q^\sim[\phi]), \quad g(\Q^\sim[\psi_m])\rightrightarrows g(\Q^\sim[\psi]),
\ee
\be
f'(\Q^\sim[\phi_n])\rightrightarrows f'(\Q^\sim[\phi]), \quad g'(\Q^\sim[\psi_m])\rightrightarrows g'(\Q^\sim[\psi]).
\ee
On the other hand Lemmas \ref{lem111}-\ref{Gammaextcont} imply that 
\be
\Gamma^\sim(\Q^\sim[\phi_n],\Q^\sim[\psi_m])=\Gamma^\sim[\phi_n,\psi_m]\rightrightarrows\Gamma^\sim[\phi,\psi],
\ee 
\be
\Gamma^\sim(\Q^\sim[\phi_n],v)=\Gamma^\sim[\phi_n](v)\rightrightarrows\Gamma^\sim[\phi](v),\qquad \Gamma^\sim(\Q^\sim[\psi_n],u)=\Gamma^\sim[\psi_n](u)\rightrightarrows\Gamma^\sim[\psi](u).
\ee 
Then the integrands of $\Psi(\phi_n,\psi_m,f,g)$ converge uniformly to the integrand of $\Psi(\phi,\psi,f,g)$ as $n,m\to\infty$, so the integrals themselves also converge.  But we already know \eqref{EQexpressiongen} for polynomials hence we get
\be
\calE_{z,z';\theta}(f(\Q^\sim[\phi_n])u, g(\Q^\sim[\psi_m])v)=\Psi(\phi_n,\psi_m,f,g)\to \Psi(\phi,\psi,f,g).
\ee
By Proposition \ref{Eprop} this convergence implies that $f(\Q^\sim[\phi])u,g(\Q^\sim[\psi])v\in\D[\calE_{z,z';\theta}]$ and \eqref{EQexpressiongen} holds for this case.

Finally, assume that $\phi,\psi\in C^1[-\theta;1]$ and $f\in C^1(\im_\phi), g\in C^1(\im_\psi)$. The functions $\phi,\psi$ are continuous functions on $[-\theta;1]$ hence their images $\im_\phi,\im_\psi$ are compact intervals in $\R$. Note that $\Q^{(N,M)}[\phi]$ can be viewed as the weighted avarage
$$
\Q^{(N,M)}[\phi]=\sum_{i\geq N+1}\alpha_i\phi(\alpha_i)+\sum_{j\geq M+1}\beta_j\phi(\beta_j)+(1-\!\sum_{i\geq N+1}\!\alpha_i-\!\sum_{j\geq M+1}\!\beta_j)\phi(0)
$$
and since $\im_\phi$ is closed and convex this implies that the values of $\calQ^\sim[\phi]$ are in $\im_\phi$, and similarly for $\calQ^\sim[\psi]$. So the functions $f(\calQ^\sim[\phi]), g(\calQ^\sim[\psi])$ are well-defined. 

Now take sequences of polynomials $f_n,g_m$ converging to $f,g$ in $C^1(\im_\phi)$ and $C^1(\im_\psi)$ respectively. Then we have uniform convergences
\be
f_n(\Q^\sim[\phi])\rightrightarrows f(\Q^\sim[\phi]), \quad g_m(\Q^\sim[\psi])\rightrightarrows g(\Q^\sim[\psi]),
\ee
\be
f_n'(\Q^\sim[\phi])\rightrightarrows f'(\Q^\sim[\phi]), \quad g_m'(\Q^\sim[\psi])\rightrightarrows g'(\Q^\sim[\psi]).
\ee
Hence $\Psi(\phi,\psi,f_n,g_m)$ converges to $\Psi(\phi,\psi,f,g)$ as $n,m\to\infty$. By the previous case we already know that 
$$
\calE_{z,z';\theta}(f_n(\Q^\sim[\phi])u, g_m(\Q^\sim[\psi])v)=\Psi(\phi,\psi,f_n,g_m).
$$
So we can again apply Proposition \ref{Eprop} to get that $f(\Q^\sim[\phi])u,g(\Q^\sim[\psi])v\in\D[\calE_{z,z';\theta}]$ and \eqref{EQexpressiongen} holds.
\end{proof}

Using Proposition \ref{prop111} we can now evaluate $\calE_{z,z';\theta}$ at $\chi_C$. Define
\begin{multline}\label{chiDouble}
\Gamma^{(N,M)}_{C,D}=\frac{\Gamma^{(N,M)}[e^{Ct}, e^{Dt}]}{CD(1+\Q^{(N,M)}[e^{Ct}])(1+\Q^{(N,M)}[e^{Dt}])}\\
=\frac{\calQ^{(N,M)}[(1+Ct)(1+Dt)e^{(C+D)t}]-\calQ^{(N,M)}[(1+Ct)e^{Ct}]\calQ^{(N,M)}[(1+Ct)e^{Ct}]}{CD(1+\Q^{(N,M)}[e^{Ct}])(1+\Q^{(N,M)}[e^{Dt}])},
\end{multline}
and for $v\in\calP^{(N,M)}$
\begin{equation}\label{chiSingle}
\Gamma^{(N,M)}_C(v)=\frac{\Gamma^{(N,M)}[e^{Ct}](v)}{C(1+\Q^{(N,M)}[e^{Ct}])}.
\end{equation}
Informally, we think of $\Gamma_{C,D}$ and $\Gamma_C(v)$ as $\Gamma(\chi_C, \chi_D)$ and $\Gamma(\chi_C,v)$. To be more precise:

\begin{cor}
\label{cor111}
For any $u,v\in\calP^{(N,M)}$, $C,D\in\R$ and polynomials $p,q\in\R[x]$ we have $p(\chi^{(N,M)}_C)u,q(\chi^{(N,M)}_D)v\in \D[\calE_{z,z';\theta}]$ and 
\begin{multline}
\label{Echiexpressiongen}
\calE_{z,z';\theta}(p(\chi^{(N,M)}_C)u, q(\chi^{(N,M)}_D)v)=\Big\langle  p\left(\chi^{(N,M)}_C\right)q\left(\chi^{(N,M)}_D\right)\Gamma^{(N,M)}(u,v)\\ 
+p'\left(\chi^{(N,M)}_C\right)q\left(\chi^{(N,M)}_D\right)u\Gamma^{(N,M)}_C(v)+p\left(\chi^{(N,M)}_C\right)q'\left(\chi^{(N,M)}_D\right)v\Gamma^{(N,M)}_D(u)\\
+p'\left(\chi^{(N,M)}_C\right)q'\left(\chi^{(N,M)}_D\right)uv\Gamma^{(N,M)}_{C,D}\Big\rangle_{\M}.
\end{multline}
\end{cor}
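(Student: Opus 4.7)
The plan is to realize $\chi_C^{(N,M)}$ as the image of $\calQ^{(N,M)}[e^{Ct}]$ under a $C^1$ function and then invoke Proposition \ref{prop111} directly. Fix $C,D \in \R \setminus \{0\}$ (for $C=0$ the function $\chi_C^{(N,M)}$ is not itself defined), and set
\[
F_C(y) := C^{-1}\ln(1+y),
\]
which is $C^1$ on $(-1,\infty)$. Since $e^{Ct}$ is strictly positive on $[-\theta;1]$, its image $\im_{e^{Ct}}$ is a compact subinterval of $(0,\infty) \subset (-1,\infty)$, so $F_C \in C^1(\im_{e^{Ct}})$, and for any polynomial $p$ the composition $p\circ F_C$ is $C^1$ on $\im_{e^{Ct}}$. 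By the definition of $\chi_C^{(N,M)}$,
\[
p(\chi_C^{(N,M)}) = (p\circ F_C)(\calQ^{(N,M)}[e^{Ct}]),
\]
and analogously $q(\chi_D^{(N,M)}) = (q\circ F_D)(\calQ^{(N,M)}[e^{Dt}])$.

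Next, apply Proposition \ref{prop111} with $\phi = e^{Ct}$, $\psi = e^{Dt}$, $f = p\circ F_C$, and $g = q\circ F_D$. This immediately yields $p(\chi_C^{(N,M)})u,\, q(\chi_D^{(N,M)})v \in \calD[\calE_{z,z';\theta}]$ together with the formula \eqref{EQexpressiongen} for the Dirichlet form. The only remaining task is to rewrite that formula in the form \eqref{Echiexpressiongen}. The chain rule gives
\[
f'(\calQ^{(N,M)}[e^{Ct}]) = F_C'(\calQ^{(N,M)}[e^{Ct}])\, p'(\chi_C^{(N,M)}) = \frac{p'(\chi_C^{(N,M)})}{C(1+\calQ^{(N,M)}[e^{Ct}])},
\]
and analogously for $g'$. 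Substituting these into \eqref{EQexpressiongen} and comparing with the definitions \eqref{chiSingle} and \eqref{chiDouble}, the scalar factors $C^{-1}(1+\calQ^{(N,M)}[e^{Ct}])^{-1}$ and $D^{-1}(1+\calQ^{(N,M)}[e^{Dt}])^{-1}$ produced by the chain rule are absorbed into $\Gamma^{(N,M)}_C$ and $\Gamma^{(N,M)}_{C,D}$, and each of the four terms matches the corresponding term in \eqref{Echiexpressiongen} term by term.

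There is no serious obstacle here: the corollary is a mechanical consequence of Proposition \ref{prop111} once one notices the composition structure $\chi_C^{(N,M)} = F_C(\calQ^{(N,M)}[e^{Ct}])$. The only mild subtlety is verifying the regularity hypothesis of Proposition \ref{prop111}, which reduces to the observation that $e^{Ct}$ takes values in $(0,\infty) \subset (-1,\infty)$, the domain on which $F_C$ is smooth.
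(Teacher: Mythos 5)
Your proof is correct and is essentially identical to the paper's: the paper also obtains the corollary by applying Proposition \ref{prop111} with $f(x)=p(C^{-1}\ln(1+x))$, $g(y)=q(D^{-1}\ln(1+y))$, $\phi=e^{Ct}$, $\psi=e^{Dt}$ and the chain rule. Your explicit check that $e^{Ct}>0$ so that $F_C$ is $C^1$ on $\im_{e^{Ct}}$ (and your remark that $C,D$ must be nonzero) is a welcome bit of added care, but it is the same argument.
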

\begin{proof}
Follows directly from Proposition \ref{prop111} by setting $f(x)=p(C^{-1}\ln(1+x)), g(y)=q(D^{-1}\ln(1+y)),\phi(t)=e^{Ct}, \psi(t)=e^{Dt}$ and applying chain rule.
\end{proof}

\subsection{Asymptotics of $\Gamma^{(N,M)}_{C,D}$ and $\Gamma^{(N,M)}_C(v)$.} Our next goal is to analyze the expression from Corollary \ref{cor111} as $|C|,|D|\to\infty$. To do it we use dominated convergence, so in this subsection we get uniform bounds and pointwise limits for $\Gamma^{(N,M)}_{C,D}$ and $\Gamma^{(N,M)}_C(v)$ defined in \eqref{chiDouble}, \eqref{chiSingle}. For the upcoming computations note that
\begin{equation}
\label{bbbbb}
\Gamma_C^{(N,M)}(x_i)=\frac{-x_i\Q^{(N,M)}[(tC+1)e^{Ct}-1]}{C(1+\Q^{(N,M)}[e^{Ct}])},
\end{equation}
\begin{equation}
\label{ccccc}
\Gamma_C^{(N,M)}(q^{(N,M)}_k)=(k+1)\frac{\Q^{(N,M)}[(Ct+1)e^{Ct}t^k]-\Q^{(N,M)}[(Ct+1)e^{Ct}]q^{(N,M)}_k}{C(1+\Q^{(N,M)}[e^{Ct}])}.
\end{equation}

\begin{prop}[uniform bound]\label{lem353535}
1) For sufficiently large $C,D>0$ or sufficiently small $C,D<0$ the functions $\Gamma^{(N,M)}_{C,D}$ are uniformly bounded by a constant which is independent of $C, D$.

2) Fix $v\in\calP^{(N,M)}$. The functions $\Gamma^{(N,M)}_{C}(v)$ are uniformly bounded by a constant independent of $C$ for sufficiently large $|C|$.
\end{prop}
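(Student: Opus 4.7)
The plan is to prove Parts 1 and 2 separately, with Part 2 being the more straightforward reduction.

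For Part 2, I would first reduce the bound on $\Gamma_C^{(N,M)}(v)$ to bounds on $\Gamma_C^{(N,M)}(x_i)$ and $\Gamma_C^{(N,M)}(q_k^{(N,M)})$: since $v$ is a polynomial in finitely many generators of $\calP^{(N,M)}$, the expression $\Gamma_C^{(N,M)}(v)$ is a finite sum of these basic quantities weighted by partial derivatives of $v$, each of which is bounded on the compact space $\Omega$. To bound \eqref{bbbbb} and \eqref{ccccc}, I would split $(1+Ct)e^{Ct}=e^{Ct}+Cte^{Ct}$ so that each expression becomes a sum of a piece carrying a $1/C$ prefactor (trivially bounded since $1+\calQ^{(N,M)}[e^{Ct}]\geq 1$) and a piece that needs to be compared with the denominator. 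The key uniform estimate, a consequence of Lemma \ref{expineq} applied to the shifted Thoma measure, is
\[
\frac{|\calQ^{(N,M)}[t^k e^{Ct}]|}{1+\calQ^{(N,M)}[e^{Ct}]}\leq \max(1,\theta^k),\qquad k\geq 1,
\]
proved by distinguishing two cases: when $\alpha_{N+1}^k e^{\alpha_{N+1}C}$ dominates $\theta^k$ in the bound of Lemma \ref{expineq}, both numerator and denominator carry a matching factor of $\alpha_{N+1}e^{\alpha_{N+1}C}$ and we may divide; otherwise the numerator is bounded by $\theta^k$ and the denominator is at least $1$. The case $C\to-\infty$ is symmetric with $\beta_{M+1}$ in place of $\alpha_{N+1}$.

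For Part 1, I interpret $\calQ^{(N,M)}$ as integration against the probability measure $\nu^{(N,M)}_{\alpha;\beta}$ on $[-\theta;1]$, so that the numerator of $\Gamma_{C,D}^{(N,M)}$ is the covariance of $h_C(t)=(1+Ct)e^{Ct}$ and $h_D(t)$. Expanding $(1+Ct)(1+Dt)=1+Ct+Dt+CDt^2$ decomposes this numerator into four groups. Three of them acquire a $1/(|C||D|)$, $1/|C|$, or $1/|D|$ prefactor after division by $CD(1+\calQ^{(N,M)}[e^{Ct}])(1+\calQ^{(N,M)}[e^{Dt}])$ and are controlled by the same estimate used in Part 2. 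The content of Part 1 lies in the remaining group
\[
\frac{\calQ^{(N,M)}[t^2 e^{(C+D)t}]-\calQ^{(N,M)}[te^{Ct}]\,\calQ^{(N,M)}[te^{Dt}]}{(1+\calQ^{(N,M)}[e^{Ct}])(1+\calQ^{(N,M)}[e^{Dt}])},
\]
which I would treat by a two-regime analysis on each of $C,D$: in the regime where $\alpha_{N+1}e^{\alpha_{N+1}C}\geq 1$, Lemma \ref{expineq} shows that both numerator and denominator factors are comparable to $\alpha_{N+1}^2 e^{(C+D)\alpha_{N+1}}$, so the ratio is of order unity (consistent with the expected asymptotic $\alpha_{N+1}(1-\alpha_{N+1})$ from Theorem \ref{result}); in the complementary regime all Laplace-type quantities are bounded by constants depending only on $\theta$ while the denominator is at least $1$.

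The main obstacle is precisely this final subcase in Part 1: the individual Laplace transforms in both numerator and denominator can grow exponentially, so a naive triangle inequality loses the covariance structure entirely, and one must carefully pair the exponential factors in numerator and denominator by case analysis on the products $\alpha_{N+1}C$ and $\alpha_{N+1}D$. If the direct case analysis turns out to be cumbersome, a cleaner alternative is to introduce the tilted probability measure $\tilde\nu_C(dt)=e^{Ct}\nu^{(N,M)}_{\alpha;\beta}(dt)/\calQ^{(N,M)}[e^{Ct}]$ and rewrite the ratio in terms of moments of $\tilde\nu_C$ and $\tilde\nu_D$; the concentration of $\tilde\nu_C$ near $\alpha_{N+1}$ as $C\to+\infty$ then encodes the required cancellation automatically.
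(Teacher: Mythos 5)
Your overall strategy is the paper's: reduce Part 2 to the generators $x_i$, $q_k^{(N,M)}$, feed Lemma \ref{expineq} (applied at the shifted point $\tau_\alpha^N\tau_\beta^M\omega$) into the numerators and denominators, and close with a case analysis on the size of $\alpha_{N+1}C$ (resp.\ $\theta\beta_{M+1}|C|$). Your Part 2 is correct and in fact slightly cleaner than the paper's: splitting $(1+Ct)e^{Ct}=e^{Ct}+Cte^{Ct}$ reduces everything to the comparison $\frac{|\calQ^{(N,M)}[t^ke^{Ct}]|}{1+\calQ^{(N,M)}[e^{Ct}]}\le\max(1,\theta^k)$ for $k\ge1$, whereas the paper funnels the whole combination $(Ct+1)e^{Ct}$ through the single two‑regime inequality \eqref{ineq}.

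In Part 1, however, the difficulty is located in the wrong place. The prefactor‑free group
$\bigl(\calQ[t^2e^{(C+D)t}]-\calQ[te^{Ct}]\calQ[te^{Dt}]\bigr)/\bigl((1+\calQ[e^{Ct}])(1+\calQ[e^{Dt}])\bigr)$
is the \emph{easy} one: no cancellation is needed, since by Lemma \ref{expineq} the first numerator term is at most $\max(\alpha_{N+1}^2e^{(C+D)\alpha_{N+1}},\theta^2)$ while the denominator dominates $\max(1,\alpha_{N+1}^2e^{(C+D)\alpha_{N+1}})$, and the product term splits into two factors each controlled by your key estimate; the triangle inequality loses nothing. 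The genuinely delicate terms are the cross terms you dismiss, e.g.
$\calQ[te^{(C+D)t}]\big/\bigl(D(1+\calQ[e^{Ct}])(1+\calQ[e^{Dt}])\bigr)$.
Here your key estimate does not apply as stated: the exponent $C+D$ in the numerator matches neither single factor in the denominator, and rewriting it as $\frac{|\calQ[te^{(C+D)t}]|}{1+\calQ[e^{(C+D)t}]}\cdot\frac{1+\calQ[e^{(C+D)t}]}{D(1+\calQ[e^{Ct}])(1+\calQ[e^{Dt}])}$ leaves a second factor you have no control over (note $\calQ[e^{(C+D)t}]\ge\calQ[e^{Ct}]\calQ[e^{Dt}]$ by Chebyshev's integral inequality, so the inequality you would need goes the wrong way). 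The correct treatment is to cancel one exponential against one denominator factor,
\begin{equation*}
\frac{\alpha_{N+1}e^{\alpha_{N+1}(C+D)}}{(1+\alpha_{N+1}e^{\alpha_{N+1}C})(1+\alpha_{N+1}e^{\alpha_{N+1}D})}\le\frac{e^{\alpha_{N+1}D}}{1+\alpha_{N+1}e^{\alpha_{N+1}D}},
\end{equation*}
and only then use the remaining $1/D$ together with the dichotomy $\alpha_{N+1}\lessgtr D^{-1}\ln D$ — i.e.\ exactly the paper's inequality \eqref{ineq}; the $1/D$ prefactor is essential here and not merely decorative, since $e^{xD}/(1+xe^{xD})$ alone is of order $D/\ln D$ at $x=D^{-1}\ln D$. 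Since that dichotomy is already in your toolkit (you invoke it for the last group), the proof goes through once this bookkeeping is corrected, but as written the claim that the three prefactored groups are ``controlled by the same estimate used in Part 2'' is not justified.
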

\begin{proof}
Let $\omega=(\alpha,\beta)\in\Omega$ denote an arbitrary point. We first deal with $\Gamma^{(N,M)}_{C,D}$.  Note that applying Lemma \ref{expineq} to $\calQ^{(N,M)}[t^ke^{Ct}](\omega)=\calQ[t^ke^{Ct}](\tau_\alpha^N\tau_\beta^M\omega)$ we get for $C>0$
\begin{equation}\label{ineqCpos}
|\calQ^{(N,M)}[t^ke^{Ct}](\omega)|\leq \max(\alpha_{N+1}^ke^{\alpha_{N+1}C};\theta^k)\leq \alpha_{N+1}^ke^{\alpha_{N+1}C}+\theta^k,
\end{equation}
$$
\calQ^{(N,M)}[e^{Ct}](\omega)\geq \alpha_{N+1}e^{\alpha_{N+1}C}
$$
and for $C<0$
\begin{equation}\label{ineqCneg}
|\calQ^{(N,M)}[t^ke^{Ct}](\omega)|\leq \max(\theta^k\beta_{M+1}^ke^{-\theta\beta_{M+1}C};1)\leq \theta^k\beta_{M+1}^ke^{-\theta\beta_{M+1}C}+1,
\end{equation}
$$
\calQ^{(N,M)}[e^{Ct}](\omega)\geq \beta_{M+1}e^{-\theta\beta_{M+1}C}.
$$
Applying these inequalities to \eqref{chiDouble} we get for $C,D>0$
\begin{multline}\label{shiftedQinqPos}
|\Gamma_{C,D}^{(N,M)}(\omega)|\leq\frac{(C\alpha_{N+1}+1)(D\alpha_{N+1}+1)e^{(C+D)\alpha_{N+1}}+(C\theta +1)(D\theta +1)}{CD(1+\alpha_{N+1} e^{C\alpha_{N+1}})(1+\alpha_{N+1} e^{D\alpha_{N+1}})}\\
+\frac{((C\alpha_{N+1}+1)e^{C\alpha_{N+1}}+C\theta +1)((D\alpha_{N+1}+1)e^{D\alpha_{N+1}}+D\theta+1)}{CD(1+\alpha_{N+1} e^{C\alpha_{N+1}})(1+\alpha_{N+1} e^{D\alpha_{N+1}})},
\end{multline}
and for $C,D<0$
\begin{multline}\label{shiftedQinqNeg}
|\Gamma_{C,D}^{(N,M)}(\omega)|\leq\frac{(|C|\theta\beta_{M+1}+1)(|D|\theta\beta_{M+1}+1)e^{\theta\beta_{M+1}(|C|+|D|)}+(|C| +1)(|D| +1)}{|C||D|(1+\beta_{M+1} e^{|C|\theta\beta_{M+1}})(1+\beta_{M+1} e^{|D|\theta\beta_{M+1}})}\\
+\frac{((|C|\beta_{M+1}+1)e^{\theta\beta_{M+1}|C|}+|C| +1)((|D|\theta\beta_{M+1}+1)e^{\theta\beta_{M+1}|D|}+|D|+1)}{|C||D|(1+\beta_{M+1} e^{|C|\theta\beta_{M+1}})(1+\beta_{M+1} e^{|D|\theta\beta_{M+1}})}.
\end{multline}

Note that for $|C|>1$ 
\be
\frac{|C|\theta +1}{|C|} < \theta +1, \quad \frac{|C| +1}{|C|} < 2,
\ee
and so it is enough to find a uniform bound for expressions of the form
\be
\frac{(|C|\lambda x+1)e^{|C|\lambda x}}{|C|(1+x e^{|C|\lambda x})}
\ee
for varying $x\in[0,1]$ and  fixed $\lambda>0$ (to get bounds on \eqref{shiftedQinqPos}, \eqref{shiftedQinqNeg} set $x=\alpha_{N+1}$, $\lambda=1$ or $x=\beta_{M+1}, \lambda=\theta$). 

To be precise, it is enough to prove that
\begin{equation}
\label{ineq}
\sup_{\substack{x\in[0;1] \\ C>2\lambda^{-1}}} \frac{(C\lambda x+1)e^{C\lambda x}}{C(1+x e^{C\lambda x})}<\infty.
\end{equation}
Note that
\be
\frac{(C\lambda x+1)e^{C\lambda x}}{C(1+x e^{C\lambda x})}=\frac{\lambda x+C^{-1}}{x+e^{-\lambda x C}}.
\ee
For $x \leq (C\lambda)^{-1}\ln (C\lambda)$ we have
\be
\frac{\lambda x+C^{-1}}{x+e^{-\lambda xC}}\leq \frac{\lambda x+C^{-1}}{x+e^{-\ln (C\lambda)}}=\lambda,
\ee
while otherwise $x > (C\lambda)^{-1}\ln (C\lambda)$ and
\be
\frac{\lambda x+C^{-1}}{x+e^{-\lambda xC}}\leq \frac{\lambda x+C^{-1}}{x}=\lambda+C^{-1}x^{-1}\leq \lambda+\frac{\lambda}{\ln (\lambda C)}\leq \lambda+\frac{\lambda}{\ln 2}.
\ee

Now consider $\Gamma_C^{(N,M)}(v)$. Since it is a differential operator with respect to $v$ we can use the product rule, so it is enouigh to consider the cases when $v$ is a generator of $\calP^{(N,M)}$. 

In the case $v=x_i$ for the right-hand side of \eqref{bbbbb} inequalities \eqref{ineqCpos}, \eqref{ineqCneg} give the bound
\be
\left|\frac{-x_i\Q^{(N,M)}[(tC+1)e^{Ct}](\omega)+x_i}{C(1+\Q^{(N,M)}[e^{Ct}](\omega))}\right|\leq\frac{x_i((|C|\lambda y+1)e^{|C|\lambda y}+|C|\lambda^{-1}\theta+1)+x_i}{|C|(1+y e^{|C|\lambda y})}
\ee
where $\lambda=1, y=\alpha_{N+1}$ if $C>0$ and $\lambda=\theta, y=\beta_{M+1}$ if $C<0$. One can readily see that this expression is uniformly bounded using \eqref{ineq} again.

For $v=q_k^{(N,M)}$ we split \eqref{ccccc} into two terms:
\be
\Gamma_C^{(N,M)}(q_k)=(k+1)\frac{\Q^{(N,M)}[(Ct+1)e^{Ct}t^k]}{C(1+\Q^{(N,M)}[e^{Ct}])}-(k+1)\frac{\Q^{(N,M)}[(Ct+1)e^{Ct}]q_k}{C(1+\Q^{(N,M)}[e^{Ct}])}
\ee
The second term is bounded by \eqref{ineq} in the same way as in the previous case. For the first term Lemma \ref{expineq} implies for $C>0$
\be
\left|\frac{\Q^{(N,M)}[(Ct+1)e^{Ct}t^k](\omega)}{C(1+\Q^{(N,M)}[e^{Ct}](\omega))}\right|\leq \frac{(C\alpha_{N+1}+1)\alpha_{N+1}^ke^{C\alpha_{N+1}}+(C\theta +1)\theta^k}{C(1+\alpha_{N+1} e^{C\alpha_{N+1}})}
\ee
and for $C<0$
\be
\left|\frac{\Q^{(N,M)}[(Ct+1)e^{Ct}t^k](\omega)}{C(1+\Q^{(N,M)}[e^{Ct}](\omega))}\right|\leq \frac{(|C|\theta\beta_{M+1}+1)\beta_{M+1}^ke^{|C|\theta\beta_{M+1}}+(|C|+1)}{C(1+\beta_{M+1} e^{|C|\theta\beta_{N+1}})}.
\ee
Both expressions can be readily bounded using \eqref{ineq}.
\end{proof}

The pointwise limit below is the reason why we need Conjecture \ref{simple-assumption}.

\begin{prop}[pointwise convergence] 
\label{lem444} Assuming that Conjecture \ref{simple-assumption} holds:

1) Functions $\Gamma^{(N,M)}_{C,D}$ converge point-wise $\M$-a.e. to 
\begin{align*}
&\Gamma^{(N+1,M)}(\alpha_{N+1},\alpha_{N+1})=\alpha_{N+1}-\alpha_{N+1}^2\qquad &\text{as}\quad C,D&\to\infty,\\
&\Gamma^{(N,M+1)}(-\theta\beta_{M+1},-\theta\beta_{M+1})=\theta^2(\beta_{M+1}-\beta_{M+1}^2) &\text{as}\quad C,D&\to-\infty.
\end{align*}

2) For fixed $v\in\calP^{(N,M)}$ the functions $\Gamma_C^{(N,M)}(v)$ converge pointwise $\M$-a.e. to 
\begin{equation}\label{lem444p2e1}
\Gamma^{(N+1,M)}(\alpha_{N+1},v)=\sum_{i=-M}^N(-\alpha_{N+1}x_i)\frac{\partial v}{\partial x_i}+\sum_{k\geq1}(k+1)(\alpha_{N+1}^{k+1}-\alpha_{N+1}q^{(N,M)}_k)\frac{\partial v}{\partial q^{(N,M)}_k}
\end{equation}
as $C\to\infty$ and to
\begin{equation}\label{lem444p2e2}
\Gamma^{(N,M+1)}(-\theta\beta_{M+1},v)=\sum_{i=-M}^N(\theta\beta_{M+1}x_i)\frac{\partial v}{\partial x_i}+\sum_{k\geq1} (-\theta)(k+1)((-\theta)^k\beta_{M+1}^{k+1}-\beta_{M+1}q^{(N,M)}_k)\frac{\partial v}{\partial q^{(N,M)}_k}
\end{equation}
as $C\to-\infty$.
\end{prop}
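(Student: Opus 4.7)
The strategy is to isolate the dominant exponential asymptotic from numerator and denominator of $\Gamma^{(N,M)}_{C,D}$ and $\Gamma^{(N,M)}_C(v)$, then apply Lemma \ref{exasympt} to the shifted point $\tau_\alpha^N\tau_\beta^M\omega$. Conjecture \ref{simple-assumption} is used precisely here: Lemma \ref{exasympt} produces the clean mass $\alpha_{N+1}$ at the atom $\{\alpha_{N+1}\}$ of the Thoma measure of the shifted point only when $\alpha_{N+1}>\alpha_{N+2}$, which holds $\M$-a.s. on $\{\alpha_{N+1}>0\}$ by the conjecture. The $\M$-null set $\{\alpha_{N+1}=0\}$ and the mirror case $C\to-\infty$ will be treated by symmetric arguments.

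For Part 1 with $C,D\to+\infty$, fix a point $\omega$ with $a:=\alpha_{N+1}>\alpha_{N+2}$ and set
\begin{align*}
f(C) &:= e^{-Ca}\bigl(1+\Q^{(N,M)}[e^{Ct}]\bigr)(\omega),\\
g(C) &:= C^{-1}e^{-Ca}\Q^{(N,M)}[(1+Ct)e^{Ct}](\omega),\\
h(C,D) &:= (CD)^{-1}e^{-(C+D)a}\Q^{(N,M)}[(1+Ct)(1+Dt)e^{(C+D)t}](\omega).
\end{align*}
Expanding $(1+Ct)$ and $(1+Ct)(1+Dt)$ into monomials reduces each of these to a finite sum of terms of the form $E^{-m}e^{-Ea}\Q^{(N,M)}[t^k e^{Et}]$ with $E\in\{C,D,C+D\}$ and fixed polynomial factor $t^k$; each such piece tends to $a^{k+1}$ up to a vanishing power of $C,D$ by Lemma \ref{exasympt}. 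A short arithmetic gives $f(C)\to a$, $g(C)\to a^2$, $h(C,D)\to a^3$. Factoring the common $CDe^{(C+D)a}$ from numerator and denominator of \eqref{chiDouble} yields
\[
\Gamma^{(N,M)}_{C,D}(\omega)=\frac{h(C,D)-g(C)g(D)}{f(C)f(D)}\longrightarrow \frac{a^3-a^4}{a^2}=a-a^2,
\]
which equals $\Gamma^{(N+1,M)}(\alpha_{N+1},\alpha_{N+1})$. Part 2 follows identically: expanding the definition \eqref{chiSingle} termwise, the coefficient of $x_i\partial v/\partial x_i$ in $\Gamma^{(N,M)}_C(v)$ equals $-(g(C)-e^{-Ca}/C)/f(C)\to-a$, while the coefficient of $\partial v/\partial q_k^{(N,M)}$ equals $(k+1)(g_k(C)-g(C)q_k^{(N,M)})/f(C)$ with $g_k(C):=C^{-1}e^{-Ca}\Q^{(N,M)}[t^k(1+Ct)e^{Ct}]\to a^{k+2}$; in the limit this produces $(k+1)(a^{k+1}-a q_k^{(N,M)})=(k+1)(\alpha_{N+1}^{k+1}-\alpha_{N+1}q_k^{(N,M)})$, which reproduces \eqref{lem444p2e1} exactly.

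The degenerate case $\alpha_{N+1}=0$, in which $\alpha_i=0$ for every $i\geq N+1$, will be handled separately. Then $\Q^{(N,M)}[t^k(1+Ct)^m e^{Ct}]$ reduces to a $\beta$-sum plus a constant, and since $-\theta\beta_j\leq 0$ on the support of the $\beta$-atoms the integrand $|t|^k(1+|C|\cdot|t|)^m e^{Ct}$ stays polynomially bounded in $C$ uniformly in $t$; after the divisions by $C$ or $CD$ a dominated convergence argument (using $\beta_j\leq 1/j$ for summability) shows that both $\Gamma_C^{(N,M)}(\omega)$ and $\Gamma_{C,D}^{(N,M)}(\omega)$ tend to $0$, matching $\Gamma^{(N+1,M)}(0,\,\cdot\,)=0$. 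The case $C\to-\infty$ is entirely symmetric after replacing $a$ by $-\theta\beta_{M+1}$ and applying Conjecture \ref{simple-assumption} to the $\beta$-coordinates together with the $C\to-\infty$ branch of Lemma \ref{exasympt}. The main technical point throughout will be controlling the $C,D$-dependent polynomial factors $(1+Ct)$ inside $\Q^{(N,M)}[\,\cdot\,]$: Lemma \ref{exasympt} applies only to fixed polynomials $p(t)$, but the monomial expansion above converts every occurrence into a finite sum of fixed-polynomial asymptotics, and the resulting powers of $C,D$ cancel exactly against the $C^{-1}$ and $(CD)^{-1}$ prefactors in the definitions of $\Gamma_C^{(N,M)}$ and $\Gamma_{C,D}^{(N,M)}$.
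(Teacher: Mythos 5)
Your proposal is correct and follows essentially the same route as the paper: reduce via Conjecture \ref{simple-assumption} to the cases $\alpha_{N+1}>\alpha_{N+2}$ or $\alpha_{N+1}=0$, apply Lemma \ref{exasympt} to the shifted point $\tau_\alpha^N\tau_\beta^M\omega$ to extract the dominant factor $Ce^{C\alpha_{N+1}}$ from numerator and denominator, and take the ratio of the limits (your normalized quantities $f,g,h,g_k$ are exactly the paper's little-$o$ asymptotics, with the monomial expansion of $(1+Ct)$ made explicit). The only cosmetic difference is that the paper absorbs the degenerate case $\alpha_{N+1}=0$ into a single unified asymptotic formula rather than treating it by a separate dominated-convergence argument as you do.
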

\begin{proof}
First note that by Conjecture \ref{simple-assumption} the set $\{(\alpha,\beta)\in\Omega | \alpha_{N+1}=\alpha_{N+2}>0\}$ has measure $0$, hence to prove $\M$-a.e. convergence it is enough to consider two cases: either $\alpha_{N+1}>\alpha_{N+2}$ or $\alpha_{N+1}=0$. Similarly we can assume that either $\beta_{M+1}>\beta_{M+2}$ or $\beta_{M+1}=0$. So, we fix a point $\omega=(\alpha,\beta)$ satisfying these conditions. 

Our main tool is Lemma \ref{exasympt} applied to the point $\tau_\alpha^N\tau_\beta^M\omega$. Namely for any $k\in\mathbb Z_{\geq 0}$ as $C\to\infty$ we have
$$
\Q^{(N,M)}[(Ct+1)t^ke^{tC}](\omega)=\nu_{\tau^{N}_\alpha\tau_\beta^M\omega}(\{\alpha_{N+1}\})(C\alpha_{N+1}+1)\alpha_{N+1}^ke^{\alpha_{N+1}C}+o(Ce^{\alpha_{N+1}C}),
$$
where we use little-$o$ notation. Note that when $\alpha_{N+1}>\alpha_{N+2}$ we have $\nu_{\tau^{N}_\alpha\tau_\beta^M\omega}(\{\alpha_{N+1}\})=\alpha_{N+1}$, while for $\alpha_{N+1}=0$ we have 
$$
\nu_{\tau^{N}_\alpha\tau_\beta^M\omega}(\{\alpha_{N+1}\})(C\alpha_{N+1}+1)\alpha_{N+1}^ke^{\alpha_{N+1}C}=\nu_{\tau^{N}_\alpha\tau_\beta^M\omega}(\{0\})\alpha_{N+1}^k.
$$
In either case we get
\begin{equation}\label{singleCpospw}
\Q^{(N,M)}[(Ct+1)t^ke^{tC}](\omega)=\alpha^{k+2}_{N+1}Ce^{\alpha_{N+1}C}+o(Ce^{\alpha_{N+1}C}),\qquad C\to\infty.
\end{equation}
For $C\to-\infty$ one can do identical computations to obtain
\begin{equation}\label{singleCnegpw}
\Q^{(N,M)}[(Ct+1)t^ke^{tC}](\omega)=\beta_{M+1}(-\theta\beta_{M+1})^{k+1}Ce^{-\theta\beta_{M+1}C}+o(Ce^{-\theta\beta_{M+1}C}),\qquad C\to-\infty.
\end{equation}
For the computation below we also often use the relations like
$$
\frac{\lambda^{2}}{e^{-\lambda C}+\lambda+o(1)}=\lambda+o(1), \qquad C\to\infty,
$$
for $\lambda\in\mathbb R_{\geq 0}$, which can be checked immediately by considering cases $\lambda\neq 0$ and $\lambda=0$ separately. 

1) 
First take $C,D\to\infty$. Applying Lemma \ref{exasympt} to the point $\tau_\alpha^N\tau_\beta^M\omega$ we have
\begin{multline*}
\Q^{(N,M)}[(Ct+1)(Dt+1)e^{t(C+D)}](\omega)\\
=\nu_{\tau^{N}_\alpha\tau_\beta^M\omega}(\{\alpha_{N+1}\})(C\alpha_{N+1}+1)(D\alpha_{N+1}+1)e^{\alpha_{N+1}(C+D)}+o(CDe^{\alpha_{N+1}(C+D)})\\
=CD\alpha^3_{N+1}e^{\alpha_{N+1}(C+D)}+o(CDe^{\alpha_{N+1}(C+D)}).
\end{multline*}
where we use the same argument as in \eqref{singleCpospw}. Then \eqref{chiDouble} combined with \eqref{singleCpospw} results in
\be
\Gamma_{C,D}^{(N,M)}(\omega)=\frac{CD(\alpha_{N+1}-\alpha_{N+1}^2)\alpha_{N+1}^2e^{\alpha_{N+1}(C+D)} + o(CDe^{\alpha_{N+1}(C+D)})}{CD(1+\alpha_{N+1}e^{\alpha_{N+1}C})(1+\alpha_{N+1}e^{\alpha_{N+1}D})+o(CDe^{\alpha_{N+1}(C+D)})}
=\alpha_{N+1}-\alpha_{N+1}^2 +o(1).
\ee

The computation for $C,D\to-\infty$ are similar: by Lemma \ref{exasympt} we have
\begin{multline*}
\Q^{(N,M)}[(Ct+1)(Dt+1)e^{t(C+D)}](\omega)\\
=\nu_{\tau^{N}_\alpha\tau_\beta^M\omega}(\{-\theta\beta_{M+1}\})(-C\theta \beta_{M+1}+1)(-D\theta\beta_{M+1}+1)e^{-\theta\beta_{M+1}(C+D)}+o(CDe^{-\theta\beta_{M+1}(C+D)})\\
=CD\theta^2\beta^3_{M+1}e^{-\theta\beta_{M+1}(C+D)}+o(CDe^{-\theta\beta_{M+1}(C+D)})
\end{multline*}
and \eqref{chiDouble},\eqref{singleCnegpw} lead to
\begin{multline*}
\Gamma_{C,D}^{(N,M)}(\omega)=\frac{(\beta_{M+1}-\beta_{M+1}^2)CD(-\theta\beta_{M+1})^2e^{-\theta\beta_{M+1}(C+D)} + o(CDe^{-\theta\beta_{M+1}(C+D)})}{CD(1+\beta_{M+1}e^{-\theta\beta_{M+1}C})(1+\beta_{M+1}e^{-\theta\beta_{M+1}D})+o(CDe^{-\theta\beta_{N+1}(C+D)})}\\
=\theta^2(\beta_{M+1}-\beta_{M+1}^2) +o(1).
\end{multline*}

2) For this part we first explain equations \eqref{lem444p2e1} and \eqref{lem444p2e2}. Note that both $\Gamma^{(N+1,M)}(\alpha_{N+1}, v)$ and $\Gamma^{(N+1,M)}(-\theta\beta_{N+1}, v)$ are differential operators in $v\in\calP^{(N,M)}$, so it is enough to check the equations when $v=x_i$ and $v={q_{k}^{(N,M)}}$. The former case follows immediately from \eqref{Gammaval}, while the latter case follows from the following computation:
\begin{multline*}
\Gamma^{(N+1,M)}(\alpha_{N+1}, q_k^{(N,M)})=\Gamma^{(N+1,M)}(\alpha_{N+1}, \alpha_{N+1}^{k+1}+q_k^{(N+1,M)})\\
=(k+1)\alpha_{N+1}^{k+1}-(k+1)\alpha_{N+1}^{k+2}-(k+1)\alpha_{N+1}q_k^{(N+1,M)}=(k+1)\left(\alpha_{N+1}^{k+1}-\alpha_{N+1}q_k^{(N,M)}\right).
\end{multline*}
\begin{multline*}
\Gamma^{(N,M+1)}(\beta_{M+1}, q_k^{(N,M)})=\Gamma^{(N,M+1)}(\beta_{M+1}, (-\theta)^k\beta_{M+1}^{k+1}+q_k^{(N+1,M)})\\
=(k+1)(-\theta)^k\beta_{M+1}^{k+1}-(k+1)(-\theta)^k\beta_{M+1}^{k+2}-(k+1)\beta_{M+1}q_k^{(N,M+1)}=(k+1)\left((-\theta)^k\beta_{M+1}^{k+1}-\beta_{M+1}q_k^{(N,M)}\right).
\end{multline*}

To show the pointwise convergence note that $\Gamma_C^{(N,M)}(v)$ is also a differential operator in $v$, so it is again enough to consider $v=x_i$ and $v=q^{(N,M)}_k$. For $v=x_i$ we apply \eqref{singleCpospw},\eqref{singleCnegpw} to \eqref{bbbbb} getting
\be
\left[\Gamma_C^{(N,M)}(x_i)\right](\omega)=\frac{-x_iC\alpha_{N+1}^2e^{C\alpha_{N+1}}+o(Ce^{C\alpha_{N+1}})}{C(1+\alpha_{N+1}e^{C\alpha_{N+1}})+o(Ce^{C\alpha_{N+1}})}=-x_i\alpha_{N+1}+o(1)\qquad C\to\infty,
\ee
\be
\left[\Gamma_C^{(N,M)}(x_i)\right](\omega)=\frac{-x_iC(-\theta\beta_{M+1})\beta_{M+1}e^{-\theta C\beta_{M+1}}+o(Ce^{-\theta C\beta_{N+1}})}{C(1+\beta_{M+1}e^{-\theta C\beta_{M+1}})+o(Ce^{-\theta C\beta_{M+1}})}=x_i\theta\beta_{M+1}+o(1) \qquad C\to-\infty.
\ee
In the case $v=q_k^{(N,M)}$ we use \eqref{ccccc} to get for $C\to\infty$
\begin{multline*}
\left[\Gamma_C^{(N,M)}\left(q^{(N,M)}_k\right)\right](\omega)=(k+1)\frac{C(\alpha_{N+1}^{k+2}-q^{(N,M)}_k\alpha_{N+1}^2)e^{C\alpha_{N+1}} +o(Ce^{C\alpha_{N+1}})}{C(1+\alpha_{N+1}e^{C\alpha_{N+1}})+o(Ce^{C\alpha_{N+1}})}\\
=(k+1)\left(\alpha_{N+1}^{k+1}-\alpha_{N+1}q^{(N,M)}_k\right)+o(1).
\end{multline*}
while for $C\to-\infty$
\begin{multline*}
\left[\Gamma_C^{(N,M)}\left(q^{(N,M)}_k\right)\right](\omega)=(k+1)\frac{C((-\theta)^{k+1}\beta_{M+1}^{k+2}-q^{(N,M)}_k(-\theta)\beta_{M+1}^2)e^{-\theta C\beta_{M+1}}+o(Ce^{-\theta C\beta_{M+1}}) }{C(1+\beta_{M+1}e^{-\theta C\beta_{M+1}})+o(Ce^{-\theta C\beta_{M+1}})}\\
=-\theta(k+1)\left((-\theta)^{k}\beta_{M+1}^{k+1}-\beta_{M+1}q^{(N,M)}_k\right)+o(1).
\end{multline*}
\end{proof}

\subsection{Proof of Theorem \ref{Evalue}.} Now we collect the results above to prove the inductive step in Theorem \ref{Evalue}.

Recall that our goal is to show that the restriction of $\calE_{z,z';\theta}$ to $\calP^{(N+1,M)}$ coincides with $E^{(N+1,M)}_{z,z';\theta}$ defined by \eqref{Evaluedef} and similarly for $(N,M+1)$. Note that we have
\begin{equation}
\label{qchange}
q_k^{(N,M)}=q_k^{(N+1,M)} + \alpha_{N+1}^{k+1},\quad q_k^{(N,M)}=q_k^{(N,M+1)} + (-\theta)^k\beta_{M+1}^{k+1}
\end{equation}
so we have natural embeddings $\calP^{(N,M)}\subset \calP^{(N+1,M)}$ and $\calP^{(N,M)}\subset \calP^{(N,M+1)}$. Moreover, from \eqref{qchange} we have $\calP^{(N+1,M)}=\calP^{(N,M)}[\alpha_{N+1}]$ and $\calP^{(N,M+1)}=\calP^{(N,M)}[\beta_{M+1}]$. Hence, by linearity of the form $\calE_{z,z';\theta}$, it is enough to show that
\begin{equation}\label{aStep}
\calE_{z,z';\theta}(p(\alpha_{N+1})u,q(\alpha_{N+1})v)=\lan\Gamma^{(N+1,M)}(p(\alpha_{N+1})u,q(\alpha_{N+1})v)\ran_{\M}.
\end{equation}
\begin{equation}\label{bStep}
\calE_{z,z';\theta}(p(\beta_{M+1})u,q(\beta_{M+1})v)=\lan\Gamma^{(N,M+1)}(p(\beta_{M+1})u,q(\beta_{M+1})v)\ran_{\M}.
\end{equation}
for polynomials $p(t),q(t)\in\mathbb R[t]$ and $u,v\in\calP^{(N,M)}$.

For now let us focus on \eqref{aStep}. By the differentiation rules the integrand in the right-hand side of \eqref{aStep} is given by 
\begin{multline}
\label{EStepexpressiongen}
\Gamma^{(N+1,M)}(p(\alpha_{N+1})u, q(\alpha_{N+1})v)=p(\alpha_{N+1})q(\alpha_{N+1})\Gamma^{(N+1,M)}(u,v) \\
+p'(\alpha_{N+1})q(\alpha_{N+1})u\Gamma^{(N+1,M)}(\alpha_{N+1},v)+p(\alpha_{N+1})q'(\alpha_{N+1})v\Gamma^{(N+1,M)}(\alpha_{N+1},u)\\
+ p'(\alpha_{N+1})q'(\alpha_{N+1})uv\Gamma^{(N+1,M)}(\alpha_{N+1},\alpha_{N+1}).
\end{multline}

On the other hand, recall that by Propositions \ref{chiBound}, \ref{chiLimit} the functions $\chi_C^{(N,M)}$ are uniformly bounded and converge pointwise to $\alpha_{N+1}$ as $C\to\infty$. Hence, by dominated convergence, $p\left(\chi_C^{(N,M)}\right)u$ converges to $p(\alpha_{N+1})u$ in $L^2(\Omega,\mu)$. Corollary \ref{cor111} tells that $p\left(\chi_C^{(N,M)}\right)u\in\D[\calE_{z,z';\theta}]$ and the value of $\calE_{z,z';\theta}$ on such functions is given by \eqref{Echiexpressiongen}.
By Propositions \ref{lem353535} and \ref{lem444} the expressions $\Gamma^{(N,M)}_C(v)$ and $\Gamma^{(N,M)}_{C,D}$ appearing in \eqref{Echiexpressiongen} are uniformly bounded by a constant and converge $\M$-a.e. pointwise to $\Gamma^{(N+1,M)}(\alpha_{N+1},v)$ and $\Gamma^{(N+1,M)}(\alpha_{N+1},\alpha_{N+1})$ respectively. Combined with the convergence of $\chi_C^{(N,M)}$ we see that \eqref{Echiexpressiongen} gives
\begin{multline}\label{limitEexpression}
\lim_{C,D\to\infty}\calE_{z,z';\theta}(p(\chi^{(N,M)}_C)u, q(\chi^{(N,M)}_D)v)=\Big\langle  p\left(\alpha_{N+1}\right)q\left(\alpha_{N+1}\right)\Gamma^{(N,M)}(u,v)\\ 
+p'\left(\alpha_{N+1}\right)q\left(\alpha_{N+1}\right)u\Gamma^{(N+1,M)}(\alpha_{N+1},v)+p\left(\alpha_{N+1}\right)q'\left(\alpha_{N+1}\right)v\Gamma^{(N+1,M)}(u,\alpha_{N+1})\\
+p'\left(\alpha_{N+1}\right)q'\left(\alpha_{N+1}\right)uv\Gamma^{(N+1,M)}(\alpha_{N+1},\alpha_{N+1})\Big\rangle_{\M}.
\end{multline}
Then by Proposition \ref{Eprop} we get $p(\alpha_{N+1})u,q(\alpha_{N+1})v\in\D[\calE_{z,z';\theta}]$ and $\calE_{z,z';\theta}(p(\alpha_{N+1})u,q(\alpha_{N+1})v)$ is given by the right-hand side of \eqref{limitEexpression}.  

To finish the proof of \eqref{aStep} we just need to compare the right-hand sides of \eqref{EStepexpressiongen} and \eqref{limitEexpression}. One can easily see that the only difference is in the first term, and it is covered by the statement below
\begin{lem}\label{consistentlem} For any $u,v\in\calP^{(N,M)}$ we have $\Gamma^{(N,M)}(u,v)=\Gamma^{(N+1,M)}(u,v)$
\end{lem}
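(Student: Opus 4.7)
The identity is purely algebraic and should follow by a chain-rule calculation applied to the natural inclusion $\calP^{(N,M)}\hookrightarrow\calP^{(N+1,M)}$ given by $q_k^{(N,M)}=q_k^{(N+1,M)}+x_{N+1}^{k+1}$. The plan is to express the derivatives of $u\in\calP^{(N,M)}$ with respect to the generators of $\calP^{(N+1,M)}$ via the chain rule, substitute into the formula \eqref{Gammaval} for $\Gamma^{(N+1,M)}(u,v)$, and check that everything collapses back to $\Gamma^{(N,M)}(u,v)$.

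Concretely, for $u\in\calP^{(N,M)}$ the chain rule yields $\partial u/\partial x_i$ unchanged for $-M\leq i\leq N$, $\partial u/\partial q_k^{(N+1,M)}=\partial u/\partial q_k^{(N,M)}$, and one new derivative
\begin{equation*}
\frac{\partial u}{\partial x_{N+1}}=\sum_{k\geq 1}(k+1)x_{N+1}^k\frac{\partial u}{\partial q_k^{(N,M)}}=:E_u,
\end{equation*}
with analogous $E_v$ for $v$. Substituting these together with $q_k^{(N+1,M)}=q_k^{(N,M)}-x_{N+1}^{k+1}$ into \eqref{Gammaval}, one recovers $\Gamma^{(N,M)}(u,v)$ plus a collection of correction terms, each carrying an explicit factor of $x_{N+1}$. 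These corrections fall into three natural groups: the $i=j=N+1$ term from the first sum gives $(x_{N+1}-x_{N+1}^2)E_uE_v$; the mixed $i=N+1,\,j\leq N$ cross-terms from the first sum, together with the expansion of $q_k^{(N+1,M)}$ in the second sum, give linear-in-$x_{N+1}$ pieces; and the expansion of $q_k^{(N+1,M)}q_l^{(N+1,M)}$ in the third sum produces $-(x_{N+1}-x_{N+1}^2)E_uE_v$ plus further linear pieces.

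The main (and only) step is then to check that these three groups sum to zero. A direct calculation shows that the quadratic contribution from the first sum cancels against its counterpart from the third, while the remaining linear contributions, which all take the form $x_{N+1}\sum_{j=-M}^Nx_j(\partial_j u\,E_v+\partial_j v\,E_u)$ or $x_{N+1}\sum_{k\geq 1}(k+1)q_k^{(N+1,M)}(E_uR_k+E_vP_k)$ with $P_k:=\partial u/\partial q_k^{(N,M)}$ and $R_k:=\partial v/\partial q_k^{(N,M)}$, cancel pairwise between the first-second and first-third sums. The bookkeeping is the only obstacle; conceptually the identity reflects the coordinate-independence of the carr\'e-du-champ-type operator $\Gamma$ attached to the pre-generator $A_{z,z';\theta}$. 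The analogous identity $\Gamma^{(N,M)}=\Gamma^{(N,M+1)}$ needed on the $\beta$-side of the induction is handled identically, with $x_{N+1}$ replaced by $-\theta\beta_{M+1}$ and the sign conventions from $q_k^{(N,M)}=q_k^{(N,M+1)}+(-\theta)^k\beta_{M+1}^{k+1}$ tracked throughout.
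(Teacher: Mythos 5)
Your argument is correct and amounts to the same direct verification the paper gives; the paper just organizes it more economically by noting that both sides are bi-derivations, so it suffices to check the identity on the generator pairs $(x_i,x_j)$, $(x_i,q_k^{(N,M)})$ and $(q_k^{(N,M)},q_l^{(N,M)})$ via the substitution $q_k^{(N,M)}=q_k^{(N+1,M)}+\alpha_{N+1}^{k+1}$. One bookkeeping remark should you write your version out in full: the pure powers of $x_{N+1}$ coming from the third sum contribute $-(x_{N+1}+x_{N+1}^2)E_uE_v$ rather than $-(x_{N+1}-x_{N+1}^2)E_uE_v$, and the balance $+2x_{N+1}^2E_uE_v$ needed to cancel the first sum's $(x_{N+1}-x_{N+1}^2)E_uE_v$ comes from expanding $q_k^{(N+1,M)}=q_k^{(N,M)}-x_{N+1}^{k+1}$ in the cross terms of the second and third sums --- the total is still zero, so the argument goes through.
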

\begin{proof}
Both sides are bi-differential operators in $u,v$, so it is enough to consider the cases when $u,v$ are generators of $\calP^{(N.M)}$. These cases can be directly verified: for $k,l\in\mathbb Z_{\geq 1}$, $i,j=-M,-M+1, \dots, N$ we have
$$
\Gamma^{(N+1,M)}(x_i, x_j)=\1_{i=j} x_i-x_ix_j=\Gamma^{(N,M)}(x_i, x_j),
$$
\begin{multline*}
\Gamma^{(N+1,M)}(x_i, q^{(N,M)}_k)=\Gamma^{(N+1,M)}(x_i, \alpha_{N+1}^{k+1}+q^{(N+1,M)}_k)\\
=-(k+1)x_i\alpha_{N+1}^{k+1}-(k+1)x_iq^{(N+1,M)}_k=-(k+1)x_iq^{(N,M)}_k=\Gamma^{(N,M)}(x_i, q^{N,M}_k),
\end{multline*}
\begin{multline*}
\Gamma^{(N+1,M)}(q^{(N,M)}_k, q^{(N,M)}_l)=\Gamma^{(N+1,M)}(\alpha_{N+1}^{k+1}+q^{(N+1,M)}_k, \alpha_{N+1}^{l+1}+q^{(N+1,M)}_l)\\
=(k+1)(l+1)\left[\alpha_{N+1}^{k+l+1}-\alpha_{N+1}^{k+l+2}-\alpha_{N+1}^{k+1}q^{(N+1,M)}_l-\alpha_{N+1}^{l+1}q^{(N+1,M)}_k+q_{k+l}^{(N+1,M)}-q_{k}^{(N+1,M)} q_{l}^{(N+1,M)}  \right]\\
=(k+1)(l+1)\left[q_{k+l}^{(N,M)}-q_{k}^{(N,M)} q_{l}^{(N,M)}  \right]=\Gamma^{(N,M)}(q^{(N,M)}_k, q^{(N,M)}_l).
\end{multline*}
\end{proof}

The proof of the other half  \eqref{bStep} is identical to the argument above so we only briefly outline the needed computations. Recall that $\chi^{(N,M)}_C$ as $C\to-\infty$ are uniformly bounded and converge pointwise to $-\theta\beta_{M+1}$. Hence to get $\calE_{z,z';\theta}(p(\beta_{M+1})u,q(\beta_{M+1})v)$ we need to consider the limits 
$$\lim_{C,D\to-\infty}\calE_{z,z';\theta}\left(p\left(-\theta^{-1}\chi_{C}^{(N,M)}\right)u,q\left(-\theta^{-1}\chi_{D}^{(N,M)}\right)v\right).$$
Such limits can again be computed using Corollary \ref{cor111} and Propositions \ref{lem353535} and \ref{lem444}, getting
\begin{multline*}
\lim_{C,D\to-\infty}\calE(p(-\theta^{-1}\chi^{(N,M)}_C)u, q(-\theta^{-1}\chi^{(N,M)}_D)v)=\Big\langle  p\left(\beta_{M+1}\right)q\left(\beta_{M+1}\right)\Gamma^{(N,M)}(u,v)\\ 
+(-\theta^{-1})p'\left(\beta_{M+1}\right)q\left(\beta_{M+1}\right)u\Gamma^{(N,M+1)}(-\theta\beta_{M+1},v)+p\left(\beta_{M+1}\right)(-\theta^{-1})q'\left(\beta_{M+1}\right)v\Gamma^{(N,M+1)}(u,-\theta\beta_{M+1})\\
+(-\theta^{-1})p'\left(\beta_{M+1}\right)(-\theta^{-1})q'\left(\beta_{M+1}\right)uv\Gamma^{(N,M+1)}(-\theta\beta_{M+1},-\theta\beta_{M+1})\Big\rangle_{\M}.
\end{multline*}
By direct computations similar to Lemma \ref{consistentlem} one can show that $\Gamma^{(N,M)}(u,v)=\Gamma^{(N,M+1)}(u,v)$ and then \eqref{bStep} follows since the integrand in the right-hand side above is exactly $\Gamma^{(N,M+1)}(p(\beta_{M+1})u,q(\beta_{M+1})v)$. 
 
So we have verified the induction steps $(N,M)\implies (N+1,M)$ and $(N,M)\implies (N,M+1)$, finishing the proof of Theorem \ref{Evalue}. \qed

\end{section}

\begin{section}{Process $X_{z,z';\theta}$ never leaves $\Omega_0$}
In this section we show that the process $X_{z,z';\theta}(t)$ almost surely for all $t>0$ exists on the face $\Omega_0$, under the assumption that Conjectures \ref{boundary-assumption} and \ref{simple-assumption} hold. In particular, for $\theta=1$ the assumption is true and so our result unconditionally holds in this case.

We start by considering a special case when the process starts from its symmetrizing distribution $\M$.
\begin{prop}\label{boundaryfirststep}
Assume that $\theta=1$ or $\theta$ takes any other positive value for which Conjectures \ref{boundary-assumption}, \ref{simple-assumption} hold. Then we have 
\be
\P_{\M}(X_{z,z';\theta}(t)\in\Omega_0, \forall t\geq0):=\int_{\Omega} \P_{\omega}(X(t)\in\Omega_0, \forall t\geq0)\M(d\omega)=1.
\ee
\end{prop}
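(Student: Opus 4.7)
The plan is to adapt Ethier's argument from \cite{Eth14} to our setting, using Theorem \ref{result} as the key new input and Lemma \ref{uniformLem} to lift pointwise $\M$-a.e. properties to pathwise statements about the process. First I would introduce the continuous truncations
$$
f_n := 1 - \sum_{i=1}^n \alpha_i - \sum_{j=1}^n \beta_j \in \calP_{nat},
$$
so that $f_n \searrow \gamma$ pointwise on $\Omega$ with $0 \le f_n \le 1$. By Theorem \ref{result}, $f_n \in \calD[\calE_{z,z';\theta}]$, and by Conjecture \ref{boundary-assumption} (which is part of our standing hypothesis) $\gamma = 0$ holds $\M$-a.e., so dominated convergence gives $f_n \to 0$ in $L^2(\Omega, \M)$.

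Next I would verify that $\{f_n\}$ is an $\calE_1$-Cauchy sequence with limit $0$. For $n > m$ all nonzero partials of $f_n - f_m$ in the natural coordinates equal $-1$, so the explicit formula from Theorem \ref{result} reduces to
$$
\Gamma_{\alpha\beta}(f_n - f_m, f_n - f_m) = T_{n,m} - T_{n,m}^2, \qquad T_{n,m} := \sum_{m < i \le n} \alpha_i + \sum_{m < j \le n} \beta_j.
$$
Since $T_{n,m} \in [0,1]$ and $T_{n,m} \to 0$ pointwise on $\Omega$ as $n, m \to \infty$ (it is a tail of a convergent sum), dominated convergence against $\M$ yields $\calE_{z,z';\theta}(f_n - f_m, f_n - f_m) \to 0$, which combined with the $L^2$-convergence gives $\calE_1$-Cauchyness.

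Then I would apply Lemma \ref{uniformLem} to extract a subsequence $\{f_{n_k}\}$ such that for $\M$-a.e.\ starting point $x$, $\P_x$-almost surely $t \mapsto f_{n_k}(X_{z,z';\theta}(t))$ converges uniformly on every compact subinterval of $[0, \infty)$. Because $X_{z,z';\theta}$ has continuous paths a.s.\ and each $f_{n_k}$ is continuous, every such trajectory is continuous in $t$, hence so is the uniform limit; this limit agrees pointwise in $t$ with $\gamma(X_{z,z';\theta}(t))$, so $\P_\M$-a.s.\ the map $t \mapsto \gamma(X_{z,z';\theta}(t))$ is continuous. Fixing a countable dense set $D \subset [0, \infty)$, invariance of $\M$ under the process together with $\M(\Omega \setminus \Omega_0) = 0$ gives $\gamma(X_{z,z';\theta}(t)) = 0$ $\P_\M$-a.s.\ for every $t \in D$, hence simultaneously on $D$; continuity in $t$ then promotes this to all $t \ge 0$, which is exactly the desired statement (the case $t = 0$ also being immediate from $X_{z,z';\theta}(0) \sim \M$).

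The main subtlety is the passage from pointwise-in-$t$ to simultaneous-in-$t$ vanishing of $\gamma(X_{z,z';\theta}(t))$: the coordinate $\gamma$ is \emph{not} continuous on $\Omega$, so continuity of this composition does not follow directly from path-continuity of $X_{z,z';\theta}$. The combined role of Lemma \ref{uniformLem} and the new Dirichlet-form description in Theorem \ref{result} is precisely to bypass this discontinuity, by producing continuous $\calE_1$-approximants of $\gamma$ whose uniform convergence along trajectories forces the limit to be continuous in $t$ with no continuity assumption on $\gamma$ itself. This is where the work of the previous section on evaluating $\calE_{z,z';\theta}$ at natural coordinates enters essentially.
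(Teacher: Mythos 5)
Your proposal is correct and follows essentially the same route as the paper's proof: the paper works with $\varphi_n=\sum_{i=1}^n\alpha_i+\sum_{j=1}^n\beta_j$ (your $1-f_n$), obtains $\calE_1$-Cauchyness from the identical computation $\Gamma_{\alpha\beta}(\varphi_n-\varphi_m,\varphi_n-\varphi_m)=T_{n,m}-T_{n,m}^2$ via Theorem \ref{result}, and then combines Lemma \ref{uniformLem}, a.s.\ path continuity, invariance of $\M$ and Conjecture \ref{boundary-assumption} exactly as you do. The only differences are cosmetic (an affine change of the approximating functions and your more explicit countable-dense-set step at the end).
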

\begin{proof}
Our argument is similar to \cite[Proposition 2.2]{FS09}. For the duration of the proof almost sure statements are considered with respect to $\P_{\M}$.

Consider functions $\varphi_n=\sum_{i=1}^n\alpha_i+\sum_{j=1}^n\beta_j$ and $\varphi=\sum_{i\geq1}\alpha_i+\sum_{j\geq1}\beta_j$. By dominated convergence $\varphi_n$ converge to $\varphi$ in $L^2(\Omega,\M)$. At the same time, by Theorem \ref{result} for $n>m$ the following holds
\begin{multline*}
\mathcal E(\varphi_n-\varphi_m,\varphi_n-\varphi_m)=\int_{\Omega}\left(\,\sum_{i=n+1}^m\, \alpha_i+\,\sum_{j=n+1}^m\, \beta_i-\left(\,\sum_{i=n+1}^m\,\alpha_i+\,\sum_{j=n+1}^m\,\beta_j\right)^2\right)d\M\\\leq
\int_{\Omega}\left(\sum_{i\geq n+1}\alpha_i+\sum_{j\geq n+1} \beta_j\right)d\M
\end{multline*} 

The integral in the last expression converges to $0$ as $n\to\infty$ by dominated convergence because the functions $\sum_{i\geq n+1}\alpha_i+\sum_{j\geq n+1} \beta_j$ are uniformly bounded by $1$ and converge pointwise to $0$. So the sequence $(\varphi_n)$ is $\calE_1$-Cauchy and we can apply Lemma \ref{uniformLem} to get for any $T\in\mathbb R_{>0}$  subsequence $n_k$ such that
\be
\mathbb P_{\M}(\varphi_{n_k}(X_{z,z';\theta}(t))\ \text{converge\ uniformly\ on}\ [0;T])=1.
\ee
Since $\varphi_n$ converge pointwise to $\varphi$, we know that the uniform limit above must be $\varphi(X_{z,z';\theta}(t))$. Since $\varphi_{n_k}\in C(\Omega)$ and $X_{z,z';\theta}$ has continuous trajectories a.s., the functions $\varphi_{n_k}(X_{z,z';\theta}(t))$ are continuous with respect to $t$ a.s., hence $\varphi(X_{z,z';\theta}(t))$ is a uniform limit of continuous functions a.s. and we get
$$\mathbb P_{\M}(\varphi(X(t))\ \text{is\ continuous\ in}\ t\in\mathbb R_{\geq 0})=1.$$

By Conjecture \ref{boundary-assumption} for any fixed $t\geq 0$ we have $\P_{\M}(X_{z,z';\theta}(t)\in\Omega_0)=\M(\Omega_0)=1$ since $\M$ is the invariant measure of $X_{z,z';\theta}$. Then for any fixed $t\in\mathbb R_{\geq 0}$ we have $\varphi(X(t))=1$ a. s. and since $\varphi(X(t))$ is continuous a.s. we get 
$$\P_{\M}(X_{z,z';\theta}(t)\in\Omega_0,  \forall t\geq0)=\mathbb P_{\M}(\varphi(X(t))=1,\forall  t\geq0)=1.$$
\end{proof}

Now we consider the general statement
\begin{theo} \label{boundary-result} Assume that $\theta=1$ or $\theta$ takes any other positive value for which Conjectures \ref{boundary-assumption}, \ref{simple-assumption} hold. Then for any $\omega\in\Omega$ we have
\be
\mathbb P_{\omega}(X_{z,z';\theta}(t)\in\Omega_0, \forall t>0)=1.
\ee
\end{theo}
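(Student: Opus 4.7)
The plan is to promote Proposition \ref{boundaryfirststep}, which handles the case of the invariant initial distribution $\M$, to an arbitrary starting point $\omega_0\in\Omega$ via the absolute continuity of transition functions supplied by Theorem \ref{density-theorem}. This is exactly the Ethier-style argument from \cite{Eth14} adapted to our setting. First I would unpack Proposition \ref{boundaryfirststep} as
$$\int_\Omega \P_\omega\bigl(X_{z,z';\theta}(t)\in\Omega_0, \forall t\geq 0\bigr)\, \M(d\omega) = 1,$$
and extract a measurable ``good'' set $B\subset\Omega$ with $\M(B)=1$ on which $\P_\omega(X_{z,z';\theta}(t)\in\Omega_0, \forall t\geq 0)=1$.

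Next, fix an arbitrary $\omega_0\in\Omega$ and $s>0$. The Markov property at time $s$ gives
$$\P_{\omega_0}\bigl(X_{z,z';\theta}(t)\in\Omega_0, \forall t\geq s\bigr) = \int_\Omega \P_y\bigl(X_{z,z';\theta}(t)\in\Omega_0, \forall t\geq 0\bigr)\, p_s(\omega_0, dy).$$
By Theorem \ref{density-theorem} we have $p_s(\omega_0,\cdot)\ll \M$, so $p_s(\omega_0, B)=1$ and the integrand is $\P_y$-a.s. equal to $1$ on the domain of integration. Hence $\P_{\omega_0}(X(t)\in\Omega_0, \forall t\geq s)=1$ for every $s>0$. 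Writing
$$\{X(t)\in\Omega_0,\forall t>0\} = \bigcap_{n\geq 1}\{X(t)\in\Omega_0,\forall t\geq 1/n\},$$
a countable intersection of probability-1 events has probability $1$, which yields the theorem.

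The one subtle point I expect to handle carefully is measurability: the event $\{X(t)\in\Omega_0, \forall t\geq s\}$ is an uncountable intersection, and $\Omega_0=\{\varphi=1\}$ is not closed in $\Omega$ because $\varphi=1-\gamma$ is not continuous in the product topology (only the $q_k$ with $k\geq 1$ are). This is resolved exactly as in the proof of Proposition \ref{boundaryfirststep}: on the event $\{\varphi(X(\cdot))\text{ is continuous on }[s,\infty)\}$, the condition $\varphi(X(t))=1$ for all $t\geq s$ reduces to the countable condition $\varphi(X(t))=1$ for all rational $t\geq s$, which is measurable. The almost-sure continuity of $\varphi(X(\cdot))$ was shown for $\M$-a.e.\ starting point via Lemma \ref{uniformLem} applied to the $\calE_1$-Cauchy sequence $\varphi_n$, and I would transfer it to arbitrary starting points $\omega_0$ on the half-line $[s,\infty)$ by the same Markov property plus absolute continuity argument used above. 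With this measurable version in hand, the three steps above go through verbatim.
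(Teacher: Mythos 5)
Your argument is exactly the paper's proof: condition on time $s$ via the Markov property, use the absolute continuity $p_s(\omega_0,\cdot)\ll\M$ from Theorem \ref{density-theorem} together with Proposition \ref{boundaryfirststep} to conclude the probability is $1$ for all $t\geq s$, and then let $s\downarrow 0$ through a countable sequence. The extra care you take with measurability of the uncountable-intersection event is a legitimate refinement of a point the paper leaves implicit, but it does not change the route.
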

\begin{proof}
The proof repeats the argument form \cite{Eth14}, for completeness sake we present it here. For any $\omega\in\Omega$ and $s>0$ by Markov property we have
\begin{multline*}
\mathbb P_{\omega}(X_{z,z';\theta}(t)\in\Omega_0, \forall t\geq s)=\E_\omega\left[\P_{\omega}(X_{z,z';\theta}(t)\in\Omega_0, \forall t\geq s \mid X_{z,z';\theta}(r), 0\leq r\leq s)\right]\\
=\E_\omega[\P_{X_{z,z';\theta}(s)}(X_{z,z';\theta}(t)\in\Omega_0, \forall t\geq 0)]=\int_{\Omega}\P_{\widetilde{\omega}}(X_{z,z';\theta}(t)\in\Omega_0, \forall t\geq 0) p_s(\omega, d\widetilde{\omega}),
\end{multline*}
where $p_s(\omega, d\widetilde{\omega})$ denotes the transition function of $X_{z,z';\theta}$. By Theorem \ref{density-theorem} $p_s(\omega, d\widetilde{\omega})$ is absolutely continuous with respect to $\M$ and by Proposition \ref{boundaryfirststep} 
$$
\P_{\widetilde{\omega}}(X_{z,z';\theta}(t)\in\Omega_0, \forall t\geq 0)=1\qquad \text{a.e.-}\M(d\widetilde\omega).
$$
Hence the last identity holds a.e.-$p_s(\omega, d\widetilde\omega)$ as well and
$$
\mathbb P_{\omega}(X_{z,z';\theta}(t)\in\Omega_0, \forall t\geq s)=\int_{\Omega}\P_{\widetilde{\omega}}(X_{z,z';\theta}(t)\in\Omega_0, \forall t\geq 0) p_s(\omega, d\widetilde{\omega})=\int_{\Omega} p_s(\omega, d\widetilde{\omega})=1.
$$
Since $s>0$ is arbitrary, the claim follows.
\end{proof}
\end{section}

\section{Generator $A_{z,z';\theta}$ in natural coordinates.}\label{gen-sect}

The majority of this work is devoted to evaluating the Dirichlet form $\calE_{z,z';\theta}$ at natural coordinates. Here we discuss a similar question for the generator $A_{z,z';\theta}$, namely if the generator can be rewritten in terms of natural coordinates and if these coordinates are in $\calD[\overline{A}_{z,z';\theta}]$. We start this section by describing the case of Petrov's diffusion \cite{Pe07}, then we present an expression for $A_{z,z'\theta}$ in terms of natural coordinates and in the end discuss the possibility of $\calD[\overline{A}_{z,z';\theta}]$ containing natural coordinates.

\subsection{Petrov's diffusion}
Consider the infinite simplex corresponding to $\alpha$-coordinates:
$$
\overline{\nabla}_\infty=\{(\alpha,\beta)\in\Omega\mid \beta_1=\beta_2=\dots=0\}.
$$
Petrov's diffusion is a process on $\overline{\nabla}_\infty$ which depends on a pair of parameters $(a,\tau)$ and is defined by the generator
$$
A^{Pet}_{a,\tau}=\sum_{k,l\geq1}(k+1)(l+1)(q_{k+l}-q_kq_l)\frac{\partial^2}{\partial q_k\partial q_l}+\sum_{k\geq1}\big(-(k+1)(k+\tau)q_k+(k+1)(k-a)q_{k-1}\big)\frac{\partial}{\partial q_k}
$$
acting on $\Lambda^\circ=\mathbb R[q_1, q_2, \dots]$. The Poisson-Dirichlet measure $PD(a,\tau)$ is the unique symmetrizing measure of the resulting process. When $a=0$ the generator above degenerates to the generator of Ethier-Kurtz's process from \cite{EK81}. In the regime
$$
\theta\to 0, \qquad zz'\theta^{-1}\to\tau,\qquad z+z'\to -a
$$
the generator $A_{z,z';\theta}$ degenerates\footnote{One should be careful about extending this degeneration to the processes, since for $\tau\neq 0$ the limit regime forces one to leave the set of allowed triples $(z,z';\theta)$ for the process $X_{z,z';\theta}$.} to $A^{Pet}_{\tau,a}$.

It was shown in \cite{Pe07} that $A^{Pet}_{a,\tau}$ has an alternative form, namely
\begin{equation}
\label{petgen-nat}
A^{Pet}_{a,\tau}=\sum_{i\geq 1}\alpha_i\frac{\partial^2}{\partial \alpha_i^2}-\sum_{i,j\geq 1}\alpha_i\alpha_j\frac{\partial^2}{\partial \alpha_i\partial \alpha_j}-\sum_{i\geq 1}(\tau \alpha_i+a)\frac{\partial}{\partial \alpha_i}.
\end{equation}
However, one should be careful when using this form: $A^{Pet}_{a,\tau}$ is still an operator acting only on $\Lambda^\circ$ and the equality above only tells that the formal application of the right-hand side to $u\in\Lambda^\circ$ coincides with $A^{Pet}_{a,\tau}u$ on a dense subset of $\overline{\nabla}_\infty$. This gives rise to the following question: what functions can be found in the domain of the closure $\overline{A}^{Pet}_{a,\tau}$ in $L^2(\overline{\nabla}_\infty, PD(a,\tau))$ and is the action of $\overline{A}^{Pet}_{a,\tau}$ is still predicted by \eqref{petgen-nat}?

While the answers to the questions above are not known, \cite[Remark 5.4]{Pe07} gives a reason to be skeptical of applying \eqref{petgen-nat} to arbitrary functions. Namely, assume that $\alpha_i\in\D[\overline{A}^{Pet}_{a,\tau}]$ and $\overline{A}^{Pet}_{a,\tau}\alpha_i$ is given by \eqref{petgen-nat}. Then the functions $\tau\alpha_i+a$ are eigenfunctions of $\overline{A}^{Pet}_{a,\tau}$ with eigenvalue $-\tau$. However it is known \cite[Section 4.2]{Pe07} that $-\tau$ is not an eigenvalue of $\overline{A}^{Pet}_{a,\tau}$, leading to a contradiction. This means that, even if $\overline{A}^{Pet}_{a,\tau}$ is defined at natural coordinates $\alpha_i$, the result $\overline{A}^{Pet}_{a,\tau}\alpha_i$ cannot be predicted using the right-hand side of \eqref{petgen-nat}.

\subsection{$A_{z,z';\theta}$ in natural coordinates.} Our goal is to generalize \eqref{petgen-nat}, at least on the level of formal expressions. To do it we introduce the following notation. Let $\mathbb Z^*=\mathbb Z\backslash\{0\}$. Recall that we use $\{x_i\}_{i\in\mathbb Z^*}$ to denote the natural coordinates: $x_i=\alpha_i$ for $i>0$ and $x_i=\beta_{-i}$ for $i<0$. We define \emph{modified coordinates} $\tilde x_i$ by setting $\tilde x_i=\sgn_\theta(i)x_i$ where
$$
\sgn_\theta(i)=\begin{cases}1&\qquad i>0\\ -\theta&\qquad i<0\end{cases}
$$
Then $\frac{\partial}{\partial\tilde x_i}=\sgn_{\theta}(i)^{-1}\frac{\partial}{\partial x_i}$. With this notation we have
$$
q_k=\sum_{i\geq 1}\alpha_i^{k+1}+\sum_{j\geq1}(-\theta)^k\beta^{k+1}_j=\sum_{i\in\mathbb Z^*}\sgn_\theta(i)^{-1}\tilde x_i^{k+1}.
$$

Define the following formal differential operator:
\begin{multline}\label{conjA} 
A^{nat}_{z,z';\theta}=\sum_{i,j}(\sgn_{\theta}(i)\tilde x_i\1_{i=j}-\tilde x_i\tilde x_j)\frac{\partial^2}{\partial \tilde x_i\partial\tilde x_j}
+\sum_i\left(z+z'-\theta\sgn_\theta(i)^{-1}-\theta^{-1}zz'\tilde x_i \right)\frac{\partial}{\partial \tilde x_i}\\
+2\theta\sum_{i\neq j}\frac{x_j}{\tilde x_i-\tilde x_j}\frac{\partial}{\partial \tilde x_i},
\end{multline}
where in all sums $i,j\in\mathbb Z^*$. We claim that for $u\in\Lambda^\circ$
\begin{equation}
\label{forgen-nat}
A_{z,z';\theta}u=A^{nat}_{z,z';\theta} u
\end{equation}
under the restriction to $\Omega_0$. Here by $A^{nat}_{z,z';\theta} u$ we mean the result of the procedure where $\frac{\partial}{\partial \tilde x_i}$ are treated as differential operators on $\mathbb R[q_1, q_2, \dots, \tilde x_{-2}, \tilde x_{-1}, \tilde x_{1}, \tilde x_2, \dots]$ with action 
$$
\frac\partial{\partial \tilde x_i} q_k=(k+1)\sgn_\theta(i)^{-1}\tilde x_i^k, \qquad \frac\partial{\partial \tilde x_i} \tilde x_j=\1_{i=j},
$$
and then the resulting infinite sums are computed treating $\tilde x_i$ as formal variables satisfying $\sum_{i\in\mathbb Z^*} x_i=1$\footnote{To be precise, we treat the resulting formal objects as collections $(f_{n,m})_{n,m\geq 1}$ of rational functions $f_{n,m}\in \mathbb R(x_{-m},\dots, x_n)$ which are regular at $x_{n}=0$, $x_{-m}=0$ and such that $f_{n,m}(x_{-m}, \dots, x_{n-1}, 0)=f_{n-1, m}(x_{-m}, \dots, x_{n-1})$, $f_{n,m}(0, x_{-m+1}, \dots, x_n)=f_{n, m-1}(x_{-m+1}, \dots, x_n)$. Note that $q_k$ can be straightforwardly interpreted as such objects.}. Note that, similarly to \eqref{petgen-nat}, we do not claim anything about evaluation of the generator at functions outside of $\Lambda^\circ$, we will discus it in the next subsection. We also remark that $\Omega_{0}$ is a dense subset of $\Omega$, so a function $u\in\Lambda^\circ\subset C(\Omega)$ is uniquely determined by its restriction to points satisfying $\sum_{i\in\mathbb Z^*} x_i=1$. %

\begin{proof}[Proof of \eqref{forgen-nat}] Throughout the proof $i,j$ always denote indices from $\mathbb Z^*$. Note that for $u\in\Lambda^\circ$
$$
\frac\partial{\partial \tilde x_i} u=\sum_{k\geq 1} (k+1)\sgn_\theta(i)^{-1}\tilde x_i^k \frac{\partial}{\partial q_k} u,
$$
where the sum on the right-hand side has only finitely many non-zero terms. Similarly we have
$$
\frac{\partial^2}{\partial \tilde x_i\partial \tilde x_j} u=\1_{i=j}\sum_{k\geq 1} k(k+1)\sgn_\theta(i)^{-1}\tilde x_i^{k-1} \frac{\partial}{\partial q_k} u+\sum_{k,l\geq 1}(k+1)(l+1)\sgn_\theta(i)^{-1}\sgn_\theta(j)^{-1}\tilde x_i^k\tilde x_j^l \frac{\partial^2}{\partial q_k\partial q_l} u.
$$
This leads to 
$$
A^{nat}_{z,z';\theta}u=\sum_{k,l\geq 1} a_{k,l}(k+1)(l+1)\frac{\partial^2}{\partial q_k\partial q_l} u+ \sum_{k\geq 1} b_k (k+1)\frac{\partial}{\partial q_k} u,
$$
where
$$
a_{k,l}=\sum_{i,j}(\sgn_{\theta}(i)\tilde x_i\1_{i=j}-\tilde x_i\tilde x_j)\sgn_\theta(i)^{-1}\sgn_\theta(j)^{-1}\tilde x_i^k\tilde x_j^l
$$
\begin{multline*}
b_k=\sum_i (\sgn_{\theta}(i)\tilde x_i-\tilde x_i^2) k\sgn_\theta(i)^{-1}\tilde x_i^{k-1}+\sum_i\left(z+z'-\theta\sgn_\theta(i)^{-1}-\theta^{-1}zz'\tilde x_i \right)\sgn_\theta(i)^{-1}\tilde x_i^k\\
+2\theta\sum_{i\neq j}\frac{x_j}{\tilde x_i-\tilde x_j}\sgn_\theta(i)^{-1}\tilde x_i^k.
\end{multline*}
For $a_{k,l}$ we have
$$
a_{k,l}=\sum_{i}\sgn_\theta(i)^{-1}\tilde x_i^{k+l+1}-\sum_{i,j}\sgn_\theta(i)^{-1}\sgn_\theta(j)^{-1}\tilde x_i^{k+1}\tilde x_j^{l+1}=q_{k+l}-q_kq_l.
$$
To compute the coefficients $b_k$ note that 
$$
-\sum_ik\sgn_\theta(i)^{-1}\tilde x_i^{k+1}+\sum_i\left(z+z'-\theta^{-1}zz'\tilde x_i \right)\sgn_\theta(i)^{-1}\tilde x_i^k=-kq_k+(z+z')q_{k-1}-\theta^{-1}zz'q_k,
$$
where for $k=1$ we use the relation $\sum_i \sgn_\theta(i)^{-1} \tilde x_i=\sum_i x_i=1=q_0$. Then we get
$$
b_k=-(k+\theta^{-1}zz')q_k+(z+z')q_{k-1}+c_k,
$$
$$
c_k=\sum_i k\tilde x_i^{k}-\sum_i\theta\sgn_\theta(i)^{-2}\tilde x_i^k+2\theta\sum_{i\neq j}\frac{x_j}{\tilde x_i-\tilde x_j}\sgn_\theta(i)^{-1}\tilde x_i^k.
$$
Note that
\begin{multline*}
2\theta\sum_{i\neq j}\frac{x_j}{\tilde x_i-\tilde x_j}\sgn_\theta(i)^{-1}\tilde x_i^k=2\theta\sum_{ i< j}x_ix_j\frac{\tilde x_i^{k-1}-\tilde x_j^{k-1}}{\tilde x_i-\tilde x_j}=2\theta\sum_{i< j}\sum_{l=0}^{k-2}x_i\tilde x^{l}_ix_j\tilde x^{k-l-2}_j\\
=\theta\sum_{l=0}^{k-2}\sum_{i, j}x_i\tilde x^{l}_ix_j\tilde x^{k-l-2}_j-\theta(k-1)\sum_{i}x^2_i\tilde x^{k-2}_i.
\end{multline*}
Then
$$
c_k=\sum_ik(1-\theta\sgn_\theta(i)^{-2})\tilde x_i^{k}+\theta\sum_{l=0}^{k-2}\sum_{ i, j}x_i\tilde x^{l}_ix_j\tilde x^{k-l-2}_j=k(1-\theta)q_{k-1}+\theta\sum_{l=0}^{k-2}q_lq_{k-l-2},
$$
where we have used the relation $1-\theta\sgn_\theta(i)^{-2}=(1-\theta)\sgn_\theta(i)^{-1}$. Combining the expressions for $a_{k,l}, b_k, c_k$ above and comparing with $A_{z,z';\theta}$ gives \eqref{forgen-nat}.
\end{proof}

\subsection{Action of $\overline{A}_{z,z';\theta}$ on natural coordinates} The formal equation \eqref{forgen-nat} leads to the following question:

\begin{quest*} Let $\calP_{nat}=\mathbb R[\alpha_1,\alpha_2,\dots, \beta_1,\beta_2,\dots]$ and $\overline{A}_{z,z';\theta}$ be the closure of $A_{z,z';\theta}$ in $L^2(\Omega, \M)$. Is it true that $\calP_{nat}\subset \D[\overline{A}_{z,z';\theta}]$ and the restriction of $\overline{A}_{z,z';\theta}$ to $\calP_{nat}$ is given by $A^{nat}_{z,z';\theta}$?
\end{quest*}

We have no answer to the question above. However we can elaborate on various arguments towards the positive or negative answer. Recall that in the case $\theta\to 0$ corresponding to Petrov's diffusion we know that the answer is negative. However, for $\theta\neq 0$ the operator $A^{nat}_{z,z';\theta}$ introduces expressions of the form $\frac{1}{\alpha_i-\alpha_j}$, $\frac{1}{\beta_i-\beta_j}$, which might both change the situation and introduce new difficulties at the same time.

First note that it is not clear why expressions $\frac{1}{\alpha_i-\alpha_j}$, $\frac{1}{\beta_i-\beta_j}$ should be in $L^2(\Omega, \M)$ in the first place, so defining $A^{nat}_{z,z';\theta}$ as an operator in $L^2(\Omega, \M)$ is already nontrivial. In fact, these terms are the main reason why we treat \eqref{forgen-nat} only as an identity of formal expressions: one can see that the sums $\sum_{i,j} \frac{\tilde x_i\tilde x^k_j}{\tilde x_i-\tilde x_j}$ might fail to converge absolutely even when all $\tilde x_i$ are distinct, which leads to challenges in justifying the rearrangements of summands used in our proof of \eqref{forgen-nat}. 

On the other hand, because of the terms $(\tilde x_i-\tilde x_j)^{-1}$ we cannot immediately find simple eigenfunctions of $A^{nat}_{z,z';\theta}$. So the argument from Petrov's case based on the eigenstructure of $A_{z,z';\theta}$ seems not to be easily applicable in our case, giving some hope for the positive answer. Moreover, if one can show that $A^{nat}_{z,z';\theta}$ is a well-defined operator $\calP_{nat}\to L^2(\Omega, \M)$, a computation involving our approximations $\chi_C$ might lead to the positive answer. 

To support the previous paragraph we finish this section by computing the pointwise limit of $\overline{A}_{z,z';\theta}\chi_C$ on a certain subset of $\Omega$. Our answer turns out to be consistent with $A^{nat}_{z,z';\theta}\alpha_1$. However, note that this computation by itself is not enough to conclude that $\alpha_1\in\D[\overline{A}_{z,z';\theta}]$, since one also needs some $L^2$-integrable uniform bound to show convergence in $L^2(\Omega,\M)$. To keep this part brief and since this is not an actual proof, we only describe the key points of our computation leaving out majority of technical details.

~

\emph{Outline of the computation of $\lim_{C\to\infty}\overline{A}_{z,z';\theta}\chi_C$:} By an argument similar to Proposition \ref{prop111} one can show that the functions $\chi_C=C^{-1}\ln(1+\calQ[e^{Ct}])$ are in $\D[\overline{A}_{z,z';\theta}]$ and this extension is defined by the rules
$$
\frac{\partial}{\partial q_k}\chi_C=\frac{C^{k-1}}{k!(1+\calQ[e^{Ct}])}\qquad \frac{\partial}{\partial q_k}\frac{\partial}{\partial q_l}\chi_C=-\frac{C^{k+l-1}}{k!l!(1+\calQ[e^{Ct}])^2}.
$$
Then we have
\begin{multline}
\label{Agenequationchi}
A_{z,z';\theta}\chi_C=-\sum_{k,l\geq 1}(k+1)(l+1)(q_{k+l}-q_kq_l)\frac{C^{k+l-1}}{k!l!(1+\calQ[e^{Ct}])^2}\\
+\sum_{k\geq 1}(k+1)\left(((1-\theta)k+z+z')q_{k-1}-(k+\theta^{-1}zz')q_k\right)\frac{C^{k-1}}{k!(1+\calQ[e^{Ct}])}\\
+\theta\sum_{k,l\geq 0}(k+l+3)q_kq_l\frac{C^{k+l+1}}{(k+l+2)!(1+\calQ[e^{Ct}])}.
\end{multline}
Now we compute the limit of \eqref{Agenequationchi} as $C\to\infty$ at a point $\omega$ such that all $\alpha_i, -\theta\beta_j$ are different and $\sum_{i\geq 1}\alpha_i+\sum_{j\geq1}\beta_j=1$. We are going to do it term by term, actively using the properties of the functions $\calQ[\phi]$ proved in Section \ref{basicQ-sect}, in particular the asymptotic behavior of $\calQ[t^ke^{Ct}]$ from Lemma \ref{exasympt}.

\paragraph{\bfseries Term 1:}  Note that
$$
-\sum_{k,l\geq 1}(k+1)(l+1)q_{k+l}\frac{C^{k+l-1}}{k!l!(1+\calQ[e^{Ct}])^2}=-\frac{\calQ[\sum_{k,l\geq 1}\frac{(k+1)(l+1)(Ct)^{k+l}}{k!l!}]}{C(1+\calQ[e^{Ct}])^2}.
$$
We have
\begin{equation}\label{tmpcollapseeq}
\sum_{k\geq 1}\frac{(k+1)(Ct)^{k}}{k!}=(1+Ct)e^{Ct}+ \text{\ polynomial\ in\ } C, t.
\end{equation}
Then, in view of Lemma \ref{exasympt} as $C\to\infty$
$$
-\frac{\calQ[\sum_{k,l\geq 1}\frac{(k+1)(l+1)(Ct)^{k+l}}{k!l!}]}{C(1+\calQ[e^{Ct}])^2}=-\frac{\alpha_1(1+C\alpha_1)^2e^{2C\alpha_1}+o(e^{2C\alpha_1})}{C(\alpha_1e^{C\alpha_1}+o(e^{C\alpha_1}))^2}=-2-C\alpha_1+o(1)
$$

\paragraph{\bfseries Term 2:} We have
$$
\sum_{k,l\geq 1}(k+1)(l+1)q_kq_l\frac{C^{k+l-1}}{k!l!(1+\calQ[e^{Ct}])^2}=C^{-1}\left(\frac{\calQ[\sum_{k\geq 1}\frac{(k+1)(Ct)^{k}}{k!}]}{1+\calQ[e^{Ct}]}\right)^2.
$$
By \eqref{tmpcollapseeq} and Lemma \ref{exasympt} this is equal to
$$
C^{-1}\left(\frac{\alpha_1(1+C\alpha_1)e^{C\alpha_1}+o(e^{C\alpha_1})}{\alpha_1e^{C\alpha_1}+o(e^{C\alpha_1})}\right)^2=C^{-1}(1+C\alpha_1)^2+o(1)=2\alpha_1+C\alpha_1^2+o(1).
$$

\paragraph{\bfseries Term 3:} 
\begin{multline*}
\sum_{k\geq 1}(k+1)\left(((1-\theta)k+z+z')q_{k-1}-(k+\theta^{-1}zz')q_k\right)\frac{C^{k-1}}{k!(1+\calQ[e^{Ct}])}\\
=\frac{\calQ[\sum_{k\geq 1}(k+1)\frac{((1-\theta)k+z+z')t^{k-1}C^k-(k+\theta^{-1}zz')(tC)^k}{k!}]}{C(1+\calQ[e^{Ct}])}.
\end{multline*}
By a direct manipulation with power series one can verify that
\begin{multline*}
\sum_{k\geq 1}(k+1)\frac{((1-\theta)k+z+z')t^{k-1}C^k-(k+\theta^{-1}zz')(tC)^k}{k!}\\
=(1-\theta)C(2+Ct)e^{Ct}+(z+z')(t^{-1}+C)(e^{Ct}-1)-(2+Ct)tCe^{Ct}-\theta^{-1}zz'(1+Ct)e^{Ct}+ \text{\ polynomial\ in\ } C,t.
\end{multline*}
By Lemma \ref{exasympt} this term is equal to
$$
\frac{\alpha_1\left((1-\theta)C(2+C\alpha_1)e^{C\alpha_1}+(z+z')(\alpha_1^{-1}+C)e^{C\alpha_1}-(2+C\alpha_1)\alpha_1Ce^{C\alpha_1}-\theta^{-1}zz'(1+C\alpha_1)e^{C\alpha_1}\right)+o(e^{C\alpha_1})}{C(\alpha_1e^{C\alpha_1}+o(e^{C\alpha_1}))}
$$
Simplifying, we get 
$$.
(1-\theta)(2+C\alpha_1)+z+z'-(2+C\alpha_1)\alpha_1-\theta^{-1}zz'\alpha_1+o(1)
$$

\paragraph{\bfseries Term 4:} We come to the most challenging term
$$
\theta\sum_{k,l\geq 0}(k+l+3)q_kq_l\frac{C^{k+l+1}}{(k+l+2)!(1+\calQ[e^{Ct}])}.
$$
Replacing $q_k$ by $\sum_{i\in\mathbb Z^*}x_i\tilde{x_i}^k$ we get 
$$
\theta\sum_{k,l\geq 0}\sum_{i,j\in\mathbb Z^*}(k+l+3)x_i\tilde x_i^kx_j\tilde x_j^l\frac{C^{k+l+1}}{(k+l+2)!(1+\calQ[e^{Ct}])}.
$$
To study this expression we swap the order of summation and split the term into two parts
$$
P_1=\theta\sum_{i\in\mathbb Z^*}\sum_{k,l\geq 0}(k+l+3)x_i^2\tilde x_i^{k+l}\frac{C^{k+l+1}}{(k+l+2)!(1+\calQ[e^{Ct}])}
$$
$$
P_2=\theta\sum_{i\neq j\in\mathbb Z^*}\sum_{k,l\geq 0}(k+l+3)x_i\tilde x_i^kx_j\tilde x_j^l\frac{C^{k+l+1}}{(k+l+2)!(1+\calQ[e^{Ct}])}.
$$
For $P_1$ by setting $n=k+l$ we get
$$
P_1=\theta\sum_{i\in\mathbb Z^*}\sum_{n\geq 0}(n+3)(n+1)x_i^2\tilde x_i^{n}\frac{C^{n+1}}{(n+2)!(1+\calQ[e^{Ct}])}.
$$
Note that
$$
\sum_{n\geq 0}(n+3)(n+1)x_i^2\tilde x_i^{n}\frac{C^{n+1}}{(n+2)!}=-\sgn_\theta(i)^{-2}C^{-1}(e^{C\tilde x_i}-1-C\tilde x_i)+x_i\sgn_\theta(i)^{-1}(e^{C\tilde x_i}-1)+x_i^2Ce^{C\tilde x_i}
$$
Then
$$
P_1=\theta\sum_{i\in\mathbb Z^*}\frac{-\sgn_\theta(i)^{-2}C^{-1}(e^{C\tilde x_i}-1-C\tilde x_i)+x_i\sgn_\theta(i)^{-1}(e^{C\tilde x_i}-1)+x_i^2Ce^{C\tilde x_i}}{1+\calQ[e^{Ct}]}
$$
and as $C\to\infty$
$$
P_1=\theta\sum_{i\in\mathbb Z^*}\frac{-\sgn_\theta(i)^{-2}C^{-1}(e^{C\tilde x_i}-1-C\tilde x_i)+x_i\sgn_\theta(i)^{-1}(e^{C\tilde x_i}-1)+x_i^2Ce^{C\tilde x_i}}{(\alpha_1+o(1))e^{C\alpha_1}}.
$$
Since $\alpha_1>\tilde x_i$ for any $i\neq 1$, all the summands above except $i=1$ decay exponentially and we have
$$
P_1=\theta\frac{-C^{-1}+\alpha_1+\alpha_1^2C}{\alpha_1}+o(1)=\theta(1+\alpha_1C)+o(1).
$$
For the second part note that for each fixed $i\neq j$ we can use the relation
$$
\sum_{k+l=n}\tilde x_i^k\tilde x_j^l=\frac{\tilde x_i^{n+1}-\tilde x_j^{n+1}}{\tilde x_i-\tilde x_j}
$$
to get:
$$
P_2=\theta\sum_{i\neq j\in\mathbb Z^*}\sum_{n\geq 0}(n+3)x_ix_j\frac{\tilde x_i^{n+1}-\tilde x_j^{n+1}}{\tilde x_i-\tilde x_j}\frac{C^{n+1}}{(n+2)!(1+\calQ[e^{Ct}])}.
$$
Note that 
$$
\sum_{n\geq 0}(n+3)\tilde x_i^{n+1}\frac{C^{n+1}}{(n+2)!}=\tilde x_i^{-1}C^{-1}(e^{C\tilde x_i}-1-C\tilde x_i)+e^{C\tilde x_i}-1  
$$
So
$$
P_2=2\theta\sum_{i<j}x_ix_j\frac{\tilde x_i^{-1}C^{-1}(e^{C\tilde x_i}-1-C\tilde x_i)+(e^{C\tilde x_i}-1)-\tilde x_j^{-1}C^{-1}(e^{C\tilde x_j}-1-C\tilde x_j)-(e^{C\tilde x_j}-1)}{(\tilde x_i-\tilde x_j)(\alpha_1+o(1))e^{C\alpha_1}}
$$
Again, since $\alpha_1>\tilde x_i$ for all $i\neq 1$, all the summands decay exponentially with exception of the case when either $i=1$ or $j=1$. So
$$
P_2=2\theta\sum_{i\neq 1}\alpha_1x_i\frac{\alpha_1^{-1}C^{-1}+1}{(\alpha_1-\tilde x_i)(\alpha_1+o(1))} +o(1)=2\theta\sum_{i\neq 1}\frac{x_i}{\alpha_1-\tilde x_i} +o(1).
$$
So, in total the contribution of the last term is
$$
\theta(1+\alpha_1C)+2\theta\sum_{i\neq 1}\frac{x_i}{\alpha_1-\tilde x_i} +o(1).
$$
Now, combining all the terms computed above, we get that $A_{z,z';\theta}\chi_C(\omega)$ as $C\to\infty$ is given by
$$
-2-C\alpha_1+2\alpha_1+C\alpha_1^2+(1-\theta)(2+C\alpha_1)+z+z'-(2+C\alpha_1)\alpha_1-\theta^{-1}zz'\alpha_1+\theta(1+\alpha_1C)+2\theta\sum_{i\neq 1}\frac{x_i}{\alpha_1-\tilde x_i} +o(1)
$$
Note that all the terms with $C$ cancel out and the limit as $C\to\infty$ is
$$
-\theta+z+z'-\theta^{-1}zz'\alpha_1+2\theta\sum_{i\neq 1}\frac{x_i}{\alpha_1-\tilde x_i}.
$$
This is exactly $A_{z,z';\theta}^{nat}\alpha_1$.

\end{document}